\documentclass[11pt,reqno]{amsart}
\usepackage{amsmath,amssymb,amsthm,mathtools,bm,array,mathrsfs}
\usepackage{needspace}
\usepackage{microtype}
\usepackage{graphicx}
\usepackage{float}
\usepackage{hyperref}
\usepackage{cite}
\hypersetup{
 hidelinks,
 pdftitle={An Unstable Positive Equilibrium and Hopf Bifurcation in a Predator--Prey System with Spatial Diffusion},
 pdfauthor={Yuan Lou and Ming-Zhen Xin}
}

\allowdisplaybreaks
\renewcommand{\baselinestretch}{1.20}
\numberwithin{equation}{section}

\newtheorem{theorem}{Theorem}[section]
\newtheorem{lemma}[theorem]{Lemma}
\theoremstyle{remark}

\newcommand{\dd}{\mathop{}\!\mathrm d}
\newcommand{\R}{\mathbb R}
\newcommand{\C}{\mathbb C}
\newcommand{\1}{\mathbf 1}
\newcommand{\cL}{\mathcal L}
\newcommand{\cH}{\mathcal H}
\newcommand{\cX}{\mathcal X}
\newcommand{\cP}{\mathcal P}
\newcommand{\cQ}{\mathcal Q}
\newcommand{\spb}{s}
\renewcommand{\Im}{\operatorname{Im}}

\title[ Hopf bifurcation in  a  predator-prey model]{\textbf{Hopf bifurcation in  a  predator-prey model with diffusion and spatial heterogeneity}}
\author[Lou  and Xin]{Yuan Lou and Ming-Zhen Xin}
\makeatletter
\g@addto@macro\@setauthors{%
  \par\vspace{-0.5\baselineskip}%
  \begin{center}
    \footnotesize
  School of Mathematical Sciences,\\
    Shanghai Jiao Tong University, Shanghai 200240, China.
  \end{center}%
}
\makeatother
\date{\today}

\begin{document}
\begin{abstract}
	
In this paper, we investigate how spatial heterogeneity affects the stability of positive equilibria in one-dimensional predator--prey reaction--diffusion systems with Neumann boundary conditions. Our examples show that spatial heterogeneity can lead to instability even when the positive equilibrium is unique. For the smooth, spatially heterogeneous coefficient families constructed here, this equilibrium loses linear stability as a parameter in the reaction coefficients varies. At the critical parameter value, a simple conjugate pair of eigenvalues crosses the imaginary axis transversely, and a Hopf bifurcation produces a local branch of strictly positive, nonstationary classical time-periodic solutions. The examples include two families with spatially concentrated prey, one with an everywhere positive and the other with an everywhere negative predator intrinsic growth rate. A third family has a spatially concentrated predator population and an everywhere negative predator intrinsic growth rate.
\vspace{1em}

\textbf{Keywords}: predator--prey system, positive equilibrium,
linear instability, spatial heterogeneity, Hopf bifurcation.

\textbf{AMS Subject Classification}: 35K57, 35B35, 35P15, 92D25.
\end{abstract}

\maketitle
\enlargethispage{4pt}

\section{Introduction and Main Results}

Classical predator--prey models date back to the work of
Lotka \cite{Lotka1920} and Volterra \cite{Volterra1926}, who used ordinary
differential equations to describe interacting populations.
In nature, individuals not only interact with one another but also
move through their environment. Skellam \cite{Skellam1951}
incorporated random dispersal into models of population growth
and considered spatially varying demographic coefficients.
Segel and Jackson \cite{SegelJackson1972} showed that diffusion can destabilize a spatially uniform equilibrium. This finding highlights the importance of studying diffusion together with local population interactions, for which reaction--diffusion systems provide a natural framework \cite{CantrellCosner19892003,OkuboLevin2001,LamLouBook}. For related work on predator--prey
systems, see
\cite{DuHsu2004,DuLou1997,DuLou1998,DuShi2006,KutoLopezGomezMunoz2026,LiWuDong2019,YiWeiShi2009,LopezNietoEtAl2025,PengWang2008,ShuXinWangYu2026}.

Spatial variation in food availability, shelter, and environmental
quality affects local growth rates as well as interaction rates.
These differences can be represented by position-dependent coefficients.
Earlier studies have shown that dispersal in heterogeneous environments
can affect population persistence, equilibrium abundance, and
competitive outcomes
\cite{CantrellCosner1989,CantrellCosner1991,Lou2006}.
In spatially heterogeneous predator--prey systems, diffusion also
influences species coexistence and the stability of positive
equilibria \cite{DuShi2007,DuShi2006}.

In this paper, we study the following predator–prey system with spatially varying reaction coefficients and Neumann boundary conditions:
\begin{equation}
\left\{
\begin{aligned}
&\partial_t u(t,x)
=d\,\partial_{xx}u(t,x)
+u(t,x)\bigl(r(x)-a(x)u(t,x)-b(x)v(t,x)\bigr),
&&t>0,\quad x\in(0,1),\\
&\partial_t v(t,x)
=D\,\partial_{xx}v(t,x)
+v(t,x)\bigl(m(x)+c(x)u(t,x)-e(x)v(t,x)\bigr),
&&t>0,\quad x\in(0,1),\\
&\partial_x u(t,x)=\partial_x v(t,x)=0,
&&t>0,\quad x\in\{0,1\}.
\end{aligned}
\right.
\label{eq:model}
\end{equation}
Here \(u,v\) denote the prey and predator densities, respectively;
\(d,D>0\) are their diffusion rates. The functions \(r,m\) specify intrinsic
growth rates; \(a,e\) are self-limitation coefficients, while \(b,c\)
represent predation loss and gain coefficients, respectively.
All six coefficients belong to \(C([0,1])\) and satisfy
\begin{equation}
a(x),b(x),c(x),e(x)>0,
\qquad x\in[0,1].
\label{eq:positive-coefficients}
\end{equation}
Under assumption~\eqref{eq:positive-coefficients}, L\'opez-G\'omez and Pardo
\cite[Theorem~4.1]{LopezGomezPardo1998} proved that system~\eqref{eq:model}
admits at most one positive equilibrium. This raises the question whether the positive equilibrium,
if it exists, must also be stable.

Let \(E=(u^*,v^*)\) be a positive equilibrium of \eqref{eq:model}.
Suppressing the spatial argument, the linearization at \(E\) is
\begin{equation}
	\mathcal L_E:=
	\begin{pmatrix}
		d\partial_{xx}+r-2au^*-bv^* & -bu^*\\[1mm]
		cv^* & D\partial_{xx}+m+cu^*-2ev^*
	\end{pmatrix}.
	\label{eq:linearized-operator}
\end{equation}
It acts on \(L^2((0,1))^2\), with domain
\[
\operatorname{Dom}(\mathcal L_E)
=\bigl\{(\xi,\psi)\in H^2((0,1))^2:
\xi'(0)=\xi'(1)=\psi'(0)=\psi'(1)=0\bigr\}.
\]
We omit \(\C\) from function-space notation in the spectral analysis,
where functions may be complex-valued. Real phase spaces are marked by \(\R\).
If the spectrum of \(\mathcal L_E\) lies strictly in the left
half-plane, then this unique equilibrium is linearly asymptotically
stable. For constant coefficients, \(\mathcal L_E\) is non-self-adjoint,
but expansion in Neumann eigenfunctions reduces its spectral analysis
to independent \(2\times2\) matrix problems. A direct calculation
then shows that every positive constant equilibrium is linearly
asymptotically stable for all \(d,D>0\).

When the coefficients vary in space, however, the equations obtained
from this expansion are generally coupled. The spectral problem
therefore no longer separates into independent matrix problems,
so the constant-coefficient calculation does not directly apply.
Determining stability in this setting requires a different approach
to the spectrum of \(\mathcal L_E\).
L\'opez-G\'omez and Pardo \cite[Section~4]{LopezGomezPardo1998}
showed that zero is not an eigenvalue of $\mathcal L_E$, identifying Hopf bifurcation as a possible mechanism for
loss of stability. Their work does not provide an example realizing
this mechanism. In this paper, we construct spatially heterogeneous systems in which
some reaction coefficients depend on a real parameter \(\eta\).
As \(\eta\) varies, the unique positive equilibrium loses stability
through a Hopf bifurcation, giving rise to a local branch of strictly
positive, nonstationary time-periodic solutions.
Thus uniqueness of a positive equilibrium neither guarantees its
stability nor rules out time-periodic dynamics.

To study the change of stability, we allow the reaction coefficients to
depend smoothly on a real parameter \(\eta\in[0,1]\), keeping the diffusion
rates fixed. In both results below, the coefficient families are smooth in
\((x,\eta)\in[0,1]^2\) and satisfy \eqref{eq:positive-coefficients}.

\begin{theorem}[Hopf bifurcation with spatially concentrated prey]
\label{thm:main}
For all sufficiently small \(\varepsilon>0\), there exist a coefficient
family independent of \(D\) and \(D_0(\varepsilon)>0\), with
\(D_0(\varepsilon)\to0\) as \(\varepsilon\to0\), such that, for
\(d=\varepsilon^2\), the following statements hold.
\begin{enumerate}
\item For every \(D>0\) and \(\eta\in[0,1]\), system~\eqref{eq:model}
has a unique positive equilibrium.
\item For every \(D>D_0(\varepsilon)\), this equilibrium loses linear
stability through a simple, transverse Hopf bifurcation as \(\eta\) increases through some
point in \((0,1)\), giving a local branch of strictly positive,
nonstationary classical time-periodic solutions.
\end{enumerate}
\end{theorem}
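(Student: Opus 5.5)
The plan is to construct the coefficients so that, as $d=\varepsilon^2\to0$, the prey equilibrium concentrates near one point and the problem reduces to a finite-dimensional system. Existence and uniqueness of the positive equilibrium, and then the Hopf bifurcation, will follow from analysis of that limiting system.

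\textbf{Choice of coefficients.} I would take $r(x)$ positive only on a small neighborhood of a point $x_0$ of width of order $\varepsilon$. A rescaled profile such as $r(x)=\rho((x-x_0)/\varepsilon)-\delta$ is one option; another keeps $r$ fixed and puts the concentration into $a$ and $b$, with $\eta$ entering through $b$ and $c$ near $x_0$. In the fast variable $y=(x-x_0)/\varepsilon$, the prey equation $\varepsilon^2 u''+u(r-au-bv)=0$ becomes $u_{yy}+u(\rho-\alpha u-\beta v)=0$. Since $D$ is fixed and large compared with $\varepsilon$, the predator is nearly constant, $v\approx\bar v$. Its equation then integrates to $\int_0^1 v(m+cu-ev)\,dx=0$, where $\int cu$ has size $\varepsilon\int \gamma U\,dy$.

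\textbf{Existence and uniqueness (part 1).} I would invoke uniqueness from \cite[Theorem~4.1]{LopezGomezPardo1998}. Existence follows from the standard coexistence criterion: both semitrivial states are invasible, i.e.\ the relevant principal eigenvalues are positive. This has to hold for all $D>0$ and all $\eta$. I would pick $m>0$ everywhere, so the predator always persists. I would choose $r$ large enough on the concentration set that the prey can invade the predator-only state for every $D$. That state $\theta_D$ is bounded above by $\max m/\min e$ by a comparison argument, so the check is uniform in $D$.

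\textbf{Linearization and reduction.} For $D>D_0(\varepsilon)$, I would write $\mathcal L_E$ and rescale the prey perturbation. Its spectrum near the imaginary axis should be governed by a $2\times2$ (or small) effective matrix. One part is the localized prey operator $\partial_{yy}+\rho-2\alpha U-\beta\bar v$ with its principal eigenpair $(\mu_\eta,\phi_\eta)$. The other is the averaged predator mode (the constant). Coupling through $-\beta U$ and $\gamma\bar v$ yields the effective matrix
\[
M(\eta)=\begin{pmatrix}\mu_\eta & -k_1\\ k_2 & -k_3\end{pmatrix}.
\]
The heterogeneity is designed so that $\mu_\eta$ can be made positive. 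In a homogeneous setting the prey linearization at its own equilibrium is stable, but here the prey shape is forced by $\beta v$ and $\alpha$ varying in $y$, and with inhomogeneous $\alpha$ the localized operator can carry an unstable mode. I would then choose the $\eta$-dependence so that $\operatorname{tr}M(\eta)=\mu_\eta-k_3$ changes sign at some $\eta_0\in(0,1)$ while $\det M>0$. This gives a conjugate pair $\pm i\omega_0$ with transverse crossing.

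\textbf{Perturbation and Hopf.} I would justify the reduction by a Lyapunov--Schmidt or Schur-complement argument. The key estimate is that the remaining spectrum of $\mathcal L_E$ lies in $\operatorname{Re}\lambda\le -c<0$ uniformly, which needs $D$ large relative to $\varepsilon$ so that higher predator modes $-D\pi^2k^2$ are strongly damped. Simplicity and transversality then persist by analytic perturbation, since eigenvalues are continuous in $\varepsilon$. I would apply the Crandall--Rabinowitz Hopf theorem for sectorial operators to obtain the periodic branch. Positivity of the periodic solutions follows because they are $C^{2,1}$-close to the strictly positive $E$.

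\textbf{Main obstacle.} I expect the hardest step to be the first two together: making $\mu_\eta$ cross the trace threshold while keeping all coefficients positive and the equilibrium unique. Verifying this needs either an explicit profile or a nondegeneracy computation in the limiting problem. I would try first a piecewise-constant-like $\alpha$ in the fast variable, smoothed afterwards.
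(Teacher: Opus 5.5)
There is a genuine gap, and it sits exactly at the step you call the main obstacle. The quantity $\mu_\eta$ can never be made positive, so no $2\times2$ reduction of the kind you describe can produce a Hopf crossing.

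Write the prey block of $\mathcal L_E$ as $\bigl(d\partial_{xx}+r-au^*-bv^*\bigr)-au^*$. The equilibrium equation says that $u^*>0$ is a Neumann eigenfunction of the operator in parentheses with eigenvalue $0$. A positive eigenfunction must be the principal one. Subtracting the strictly positive potential $au^*$ then makes this self-adjoint block negative definite, with quadratic form at most $-\min_{[0,1]}(au^*)\,\|\xi\|^2$, whatever the spatial profile of $a$. In your limiting problem the same holds: $U$ is the positive ground state, with eigenvalue $0$, of $\partial_{yy}+\rho-\alpha U-\beta\bar v$, so $\mu_\eta<0$ there too. Likewise the predator block $\bigl(D\partial_{xx}+m+cu^*-ev^*\bigr)-ev^*$ is negative definite.

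Hence any compression onto one prey mode and one predator mode has two negative diagonal entries, and $\operatorname{tr}M(\eta)<0$ uniformly. A $2\times2$ matrix with negative trace can lose stability only through a real eigenvalue passing through $0$, never through a conjugate pair, and small Schur-complement corrections do not change this. So your claim that an inhomogeneous $\alpha$ lets the localized prey operator carry an unstable mode is false, and the proposed Hopf mechanism collapses.

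The missing idea is that the slow subspace must be at least three-dimensional, so that the negative trace can be carried by a third, real eigenvalue. The paper obtains two nearly degenerate prey modes from a symmetric double well.
\begin{itemize}
\item $u^*=\phi_{1,\varepsilon}$ is the ground state of $-\varepsilon^2\partial_{xx}+W$. The tunnelling gap $\Lambda_{1,\varepsilon}$ is exponentially small, while $\Lambda_{2,\varepsilon}\ge c\varepsilon$.
\item The coefficients are reverse-engineered so that the linearization is exactly \eqref{eq:target-linearization}, with all couplings of size $\Lambda_{1,\varepsilon}$. Here $v^*=V_\varepsilon$ is a large constant, and the equilibrium identities are solved for $r$ and $m$.
\item Compressing onto $(\zeta_{1,\varepsilon},0),(\zeta_{2,\varepsilon},0),(0,\1)$ gives $M_\gamma(\eta)$. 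Its prey block $\bigl(\begin{smallmatrix}-1&1/2\\1/2&-1\end{smallmatrix}\bigr)$ is stable and its trace is $-2-\gamma$.
\item Yet $M_\gamma(\eta)$ has a transverse crossing at $\eta_*(\gamma)$, with spectrum $\{-(2+\gamma),\pm i\omega_\gamma\}$. The predator depletes prey mainly in one well (weight $L$) but grows mainly on prey in the other (weight $\eta L$), and tunnelling couples the wells, which produces the rotation.
\item Strict positivity of $b$ and $c$ is reconciled with these prescribed moments by Lemma~\ref{lem:positive-profiles}.
\end{itemize}

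Once this three-mode structure is in place, your remaining steps match the paper: the Schur reduction with the complement controlled for $D>D_0(\varepsilon)$, perturbation of simple roots, Crandall--Rabinowitz, positivity by closeness, and uniqueness from L\'opez-G\'omez--Pardo. Your invasibility argument for existence is unnecessary there, because the equilibrium is prescribed explicitly.
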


Now, we consider a numerical simulation to give the existence of a stable time-periodic solution. Take $d=1.30\times 10^{-2}$, $D=0.13$, and $\eta\in[0.90,0.95]$. The coefficient functions are given approximately by
\begingroup
\setlength{\abovedisplayskip}{6pt}
\setlength{\belowdisplayskip}{6pt}
\setlength{\abovedisplayshortskip}{6pt}
\setlength{\belowdisplayshortskip}{6pt}
\setlength{\jot}{6pt}
\[
\begin{aligned}
	&a(x)\approx 2.40\times10^{-2}e^{(1+\cos4\pi x)/2},
	\qquad e(x)\approx 9.62\times10^{-3},\\
	&b(x)\approx 10^{-2}e^{(1+\cos4\pi x)/2}
	\bigl(55.00e^{-20(x-1/4)^2}
	+6.96e^{-20(x-3/4)^2}\bigr),\\
	&c_\eta(x)\approx 10^{-2}
	\bigl[(9.23-2.27\eta)e^{-20(x-1/4)^2}
	+(55.38\eta-0.38)e^{-20(x-3/4)^2}\bigr],\\
	&r(x)\approx 3.61\times10^{-2}+1.50b(x)
	-\frac{13\pi^2}{250}
	\bigl(2\cos4\pi x+\sin^2 4\pi x\bigr),\\
	&m_\eta(x)\approx 1.44\times10^{-2}
	-1.50e^{-(1+\cos4\pi x)/2}c_\eta(x),\quad\quad x\in[0,1].
\end{aligned}
\]
\endgroup
The above values are rounded to two decimal places, and the calculations use the unrounded values. Under this set of coefficients, the unique positive equilibrium is
\[
u^*(x)=1.50e^{-(1+\cos4\pi x)/2},\qquad v^*(x)=1.50.
\]
We perform numerical computations using MATLAB and obtain the periodic solution illustrated in Figure ~\ref{fig:numerical-periodic-branch}. The Hopf bifurcation point is computed to be $\eta\approx 0.92$, with the corresponding period approximately $T=24.58$. Panels (a) and (b) show the evolution over four periods: the prey density remains spatially concentrated, whereas the predator density is nearly uniform in space. 
\begin{figure}[H]
	\centering
	\includegraphics[width=0.92\textwidth]{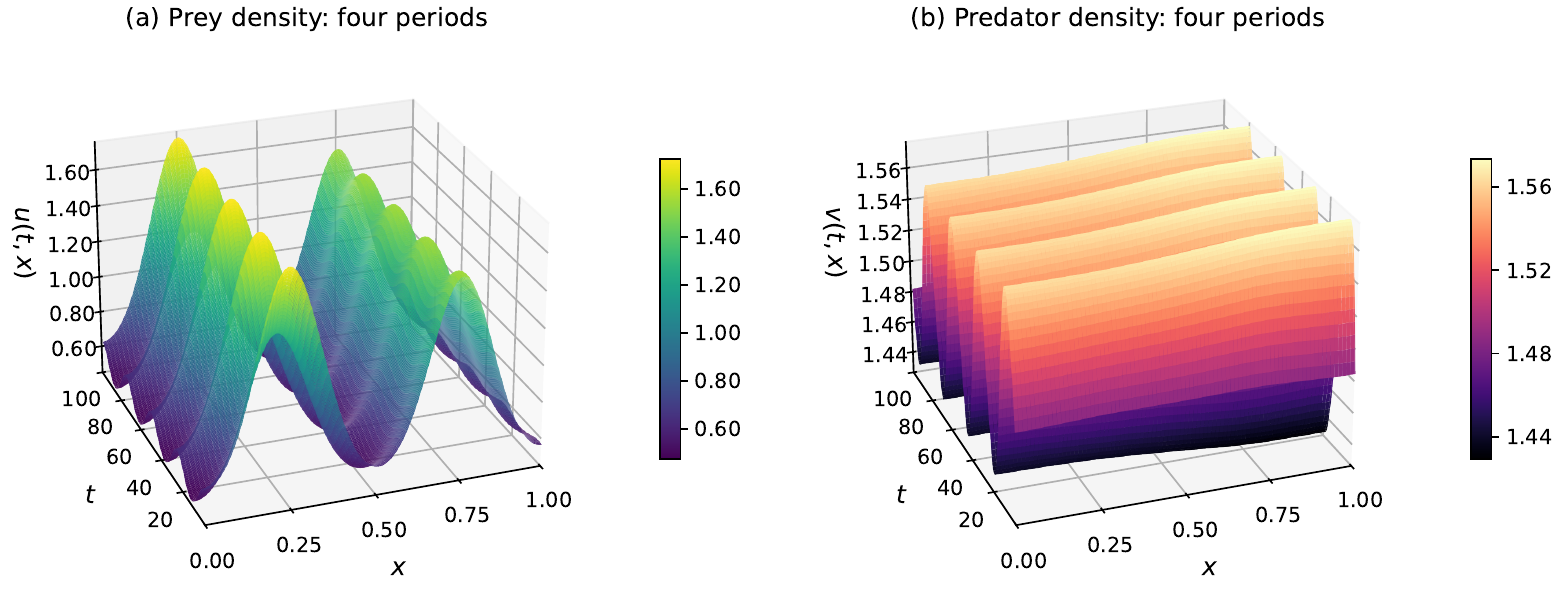}
	\caption{Numerical evolution of the periodic solution.}
	\label{fig:numerical-periodic-branch}
\end{figure}

\Needspace{5\baselineskip}
\begin{theorem}[Hopf bifurcation with spatially concentrated predator]
\label{thm:mirror-main}
For all sufficiently small \(\varepsilon>0\), there exist a coefficient
family independent of \(d\), with \(m_{\varepsilon,\eta}<0\), and a threshold
\(d_0(\varepsilon)>0\) satisfying \(d_0(\varepsilon)\to0\) as
\(\varepsilon\to0\), such that, for \(D=\varepsilon^2\), the following statements hold.
\begin{enumerate}
\item For every \(d>0\) and \(\eta\in[0,1]\), system~\eqref{eq:model}
has a unique positive equilibrium.
\item For every \(d>d_0(\varepsilon)\), this equilibrium loses linear
stability through a simple, transverse Hopf bifurcation as \(\eta\) increases through some
point in \((0,1)\), giving a local branch of strictly positive,
nonstationary classical time-periodic solutions.
\end{enumerate}
\end{theorem}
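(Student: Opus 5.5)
We mirror the construction behind Theorem~\ref{thm:main}, exchanging the roles of the two species, and proceed by \emph{inverse design}: we prescribe the equilibrium first and read off the coefficients. We fix a smooth, spatially concentrated predator profile
\[
v^*_\varepsilon(x)=V e^{-(1+\cos4\pi x)/2}
\]
(a bump family adapted to the small diffusion $D=\varepsilon^2$) and a constant prey profile $u^*\equiv U>0$. We choose positive $a,b,c_\eta,e$ (again Gaussian bumps near $x=1/4$ and $x=3/4$, with $\eta$ shifting weight between them) and define
\[
r_\eta=aU+bv^*_\varepsilon,\qquad
m_{\varepsilon,\eta}=-\frac{D\,(v^*_\varepsilon)''}{v^*_\varepsilon}-c_\eta U+e\,v^*_\varepsilon .
\]
Because $u^*$ is constant, $d$ never enters these formulas, so the family is independent of $d$. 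Taking $c_\eta$ large relative to $e v^*+\varepsilon^2|(v^*)''/v^*|$ makes $m_{\varepsilon,\eta}<0$. Then $(U,v^*_\varepsilon)$ is a positive equilibrium for every $d>0$ and $\eta\in[0,1]$, and by \cite[Theorem~4.1]{LopezGomezPardo1998} it is the unique one. This proves part (1).

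For part (2), the linearization has the prey entry $d\partial_{xx}-aU$, since $r-2au^*-bv^*=-aU$, and the predator entry
\[
\varepsilon^2\partial_{xx}+m+cU-2ev^*=\varepsilon^2\partial_{xx}-\frac{\varepsilon^2 (v^*)''}{v^*}-ev^*.
\]
The first step is a reduction for $d$ large. As $d\to\infty$ with the family fixed, the prey component of any eigenfunction is forced toward a constant. A Schur-complement (Lyapunov--Schmidt) reduction onto $\mathrm{span}\{(1,0)\}\oplus L^2$ in the predator component then yields a limiting problem with a scalar prey mode coupled to the predator operator. The predator operator is non-selfadjoint only through the rank-one averaged coupling.

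The second step is the small-$\varepsilon$ limit. The operator $\varepsilon^2\partial_{xx}-\varepsilon^2(v^*)''/v^*$ has ground state $v^*$ with eigenvalue $0$ (after the ground-state transform), plus a semiclassical spectral gap of order $\varepsilon$ at the well bottoms. Using the bump structure of $c_\eta$ and $b$ at $1/4$ and $3/4$, we reduce to a small matrix whose entries are explicit functions of $\eta$: we project onto the two localized predator modes near the minima of $e v^*$ together with the constant prey mode. We design the $\eta$-dependence so that the trace of the relevant $2\times2$ block changes sign at some $\eta_0\in(0,1)$ while its determinant stays positive, and we verify that $\partial_\eta\operatorname{Re}\lambda\neq0$ there. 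We then perturb back, by the implicit function theorem for simple eigenvalues, first to small $\varepsilon$ and then to finite $d>d_0(\varepsilon)$. Here $d_0(\varepsilon)\to0$ because the needed separation between the prey diffusion scale and the $O(\varepsilon)$ gap improves as $\varepsilon\to0$. Along the way we check that $0$ is never an eigenvalue (this is \cite[Section~4]{LopezGomezPardo1998}) and that no other eigenvalue reaches the imaginary axis.

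Finally, the Crandall--Rabinowitz Hopf theorem, in the form for sectorial operators with compact resolvent, applied in $C^{2+\alpha}$ Neumann spaces, produces the periodic branch. It gives classical regularity via parabolic Schauder estimates and strict positivity because the branch is a $C^0$-small perturbation of the positive equilibrium. The orbits are nonstationary because the equilibrium is unique.

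The main obstacle is the second step: establishing a simple conjugate pair crossing transversally while the rest of the spectrum stays uniformly in the left half-plane for all $d>d_0(\varepsilon)$, rather than for one value of $d$. To handle this, we would first derive a uniform-in-$d$ energy estimate showing that nonconstant prey modes have real parts at most $-\min a\,U$. We would then control the predator remainder by the spectral gap of the ground-state-transformed operator.
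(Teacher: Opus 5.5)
There is a genuine gap, located at the step you yourself call the main obstacle, and the setup you choose cannot close it.

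Your profile $v^*_\varepsilon=Ve^{-(1+\cos4\pi x)/2}$ does not depend on $\varepsilon$. So $\varepsilon^2\partial_{xx}-\varepsilon^2(v^*)''/v^*$ is $\varepsilon^2$ times a fixed Neumann operator. Its gap below $0$ is of order $\varepsilon^2$, not $\varepsilon$, and it has no well structure and no pair of modes isolated from the rest of the spectrum.

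If instead the $O(1)$ potential $ev^*$ is meant to create the wells, the low predator modes sit near $-\min(ev^*)$. That is at distance $O(1)$ from $i\R$, with on the order of $1/\varepsilon$ other predator modes within distance $O(1)$, so there is no finite-mode reduction. Even formally, the self-adjoint predator block compressed onto two localized, essentially positive modes is diagonal up to exponentially small errors. Coupling it to the constant prey mode through positive moments gives a matrix $\begin{pmatrix}-\alpha&-\beta_1&-\beta_2\\ \gamma_1&-m_1&0\\ \gamma_2&0&-m_2\end{pmatrix}$ with $\alpha,\beta_j,\gamma_j,m_j>0$. Its Routh--Hurwitz quantity $a_2a_1-a_0$ is a sum of positive terms, so it is always stable, and no redistribution of the Gaussian bumps in $\eta$ produces a crossing. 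Your criterion (``the trace of a $2\times2$ block changes sign'') cannot be met either, since every $2\times2$ predator--prey block has negative trace.

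Finally, your couplings are $O(1)$. You even take $c_\eta$ large, although only $cv^*$, not $cU$, enters the linearization. The Schur-complement error from the nonconstant prey modes is therefore small only when $d$ exceeds an $\varepsilon$-independent constant. Nothing in your plan makes $d_0(\varepsilon)\to0$, and perturbing back from $d=\infty$ by the implicit function theorem yields only large $d$.

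The missing idea is to match every reaction scale to a tunneling splitting.
\begin{itemize}
\item The paper prescribes $v^*=\phi_{1,\varepsilon}$, the ground state of the symmetric double-well operator $-\varepsilon^2\partial_{xx}+W$. Then $\varepsilon^2\partial_{xx}-\varepsilon^2\phi_{1,\varepsilon}''/\phi_{1,\varepsilon}=A_\varepsilon$ has the pair $\{0,-\Lambda_{1,\varepsilon}\}$, with $\Lambda_{1,\varepsilon}$ exponentially small, and the rest of its spectrum lies at or below $-\Lambda_{2,\varepsilon}$, where $\Lambda_{2,\varepsilon}\ge c\varepsilon$.
\item It puts every linearization entry on the scale $\Lambda_{1,\varepsilon}$: $e=\Lambda_{1,\varepsilon}/(2\phi_{1,\varepsilon})$, $a=\gamma_\star\Lambda_{1,\varepsilon}/U_\varepsilon$, $b=\Lambda_{1,\varepsilon}k_{b,\varepsilon}/U_\varepsilon$, $c=\Lambda_{1,\varepsilon}k_{c,\varepsilon,\eta}/\phi_{1,\varepsilon}$. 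It gets $m<0$ by taking $U_\varepsilon$ large, not $c$.
\item On $\operatorname{span}\{\zeta_{1,\varepsilon},\zeta_{2,\varepsilon}\}$ the predator block becomes $\Lambda_{1,\varepsilon}\begin{pmatrix}-1&1/2\\1/2&-1\end{pmatrix}$. The mixing $1/2$, of the same size as the damping, is what breaks sign-stability: in the eigenbasis, the odd mode's coupling product is $(L-1)(1-\eta L)/2<0$ near the crossing.
\item The strictly positive profiles of Lemma~\ref{lem:positive-profiles} realize the explicit matrix $M_{\gamma_\star}(\eta)$. Its characteristic polynomial exhibits a transverse crossing at $\eta_*(\gamma_\star)$.
\item The reduction error is $\delta_{\varepsilon,d}=\Lambda_{1,\varepsilon}K_\varepsilon^2(\Lambda_{2,\varepsilon}^{-1}+d^{-1})$. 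This is exactly why $d_0(\varepsilon)=2\Lambda_{1,\varepsilon}K_\varepsilon^2/\delta_{\mathrm{adm}}\to0$.
\end{itemize}
The paper also avoids a separate spectral analysis. The mirror linearization equals $\mathscr S(\cL^{\mathrm{aux}})^*\mathscr S^{-1}$, where $\cL^{\mathrm{aux}}$ is the concentrated-prey linearization with $D=d$. Hence the Schur reduction, the Rouch\'e argument, and the crossing of Section~\ref{sec:concentrated-prey} transfer directly.
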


For a spatially concentrated predator population, we use a constant prey density and the same
eigenfunction as the predator density. The reconstructed linearization is obtained
from the corresponding operator in the first construction by taking its adjoint,
then exchanging its two components. The spectral conclusions therefore carry over.

The paper is organized as follows. Section~\ref{sec:preparations} establishes the scalar eigenvalue estimates and
positive coupling functions used in both constructions.
Section~\ref{sec:concentrated-prey} treats spatially concentrated prey
and proves Theorem~\ref{thm:main}.
Section~\ref{sec:concentrated-predator} treats a spatially concentrated predator population
and proves Theorem~\ref{thm:mirror-main}.

\section{Spectral Preparation and Coupling Functions}
\label{sec:preparations}

In this section, we establish the scalar spectral estimates and construct the positive coupling
functions used in both instability constructions.

\subsection{Localized eigenfunctions and spectral estimates}
\label{subsec:scalar-spectrum}

We begin by establishing the scalar eigenvalue gaps and eigenfunction localization
estimates needed for the later reduction.

Fix a symmetric function
\(W\in C^\infty([0,1])\) satisfying
\begin{equation}
W(x)\ge0,\quad W^{-1}(0)=\{x_1,x_2\},\quad
0<x_1<x_2<1,\quad W(1-x)=W(x),
\quad\text{and}\quad W''(x_j)>0.
\label{eq:double-well}
\end{equation}
By symmetry, we have \(x_2=1-x_1\). Fix \(\ell>0\) such that
\(3\ell<\min\{x_1,1/2-x_1\}\).
Set \(J:=(x_1-3\ell,x_1+3\ell)\).
On \(J\), define the Dirichlet operator
\begin{equation}
\begin{aligned}
\cH_{\varepsilon,0}^{\mathrm D}w&=-\varepsilon^2w''+Ww,\quad\text{with}
\operatorname{Dom}(\cH_{\varepsilon,0}^{\mathrm D})
=H^2(J)
\cap H_0^1(J).
\end{aligned}
 \label{eq:local-Dirichlet-operator}
\end{equation}
Here
\[
H_0^1(J)
:=\{w\in H^1(J):w(x_1-3\ell)=w(x_1+3\ell)=0\}.
\]
Denote its \(k\)-th eigenvalue by \(\mu_{k,\varepsilon}\).

The limiting one-dimensional operator is
\begin{equation}
 \cH_0
 :=-\frac{\dd^2}{\dd y^2}+\frac{W''(x_1)}2y^2
 \quad\text{on }L^2(\mathbb R).
 \label{eq:limiting-quadratic-operator}
\end{equation}
Let \(\nu_1<\nu_2\) be its first two eigenvalues.

\begin{lemma}[Local eigenvalue limit near \(x_1\)]
\label{lem:half-local-spectrum}
For \(k=1,2\),
\[
\lim_{\varepsilon\to0^+}
\frac{\mu_{k,\varepsilon}}{\varepsilon}=\nu_k.
\]
\end{lemma}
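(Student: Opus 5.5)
The plan is the standard harmonic approximation argument, done with the min--max principle and matching upper and lower bounds. We rescale by setting $y=(x-x_1)/\sqrt{\varepsilon}$. Under this change of variables $\cH_{\varepsilon,0}^{\mathrm D}$ is unitarily equivalent to $\varepsilon\,\widetilde{\cH}_\varepsilon$, where
\[
\widetilde{\cH}_\varepsilon=-\frac{\dd^2}{\dd y^2}+\varepsilon^{-1}W(x_1+\sqrt{\varepsilon}\,y)
\]
acts on $J_\varepsilon=(-3\ell/\sqrt\varepsilon,3\ell/\sqrt\varepsilon)$ with Dirichlet conditions. It therefore suffices to show that the $k$-th eigenvalue of $\widetilde{\cH}_\varepsilon$ tends to $\nu_k$ for $k=1,2$. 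Since $W(x_1)=W'(x_1)=0$ and $W''(x_1)>0$, Taylor's theorem gives
\[
\varepsilon^{-1}W(x_1+\sqrt\varepsilon y)=\tfrac{W''(x_1)}{2}y^2+O(\sqrt\varepsilon|y|^3)
\]
on bounded sets. Moreover, $x_1$ is the only zero of $W$ in $\overline J$, because $3\ell<1/2-x_1$ keeps $x_2=1-x_1$ outside. Hence there is $c_0>0$ with $W(x)\ge c_0(x-x_1)^2$ on $\overline J$.

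\emph{Upper bound.} Let $\phi_1,\phi_2$ be the first two Hermite-type eigenfunctions of $\cH_0$; they are Gaussian-decaying. Multiply them by a smooth cutoff $\chi(y/R_\varepsilon)$, with $R_\varepsilon=\varepsilon^{-1/8}$, so that the products are supported in $J_\varepsilon$ for small $\varepsilon$. These trial functions are almost orthonormal, with errors that are exponentially small in $R_\varepsilon^2$. Their quadratic forms for $\widetilde\cH_\varepsilon$ differ from those for $\cH_0$ by $O(\sqrt\varepsilon R_\varepsilon^3)+o(1)\to0$. Applying min--max on their two-dimensional span gives $\limsup\tilde\mu_{k,\varepsilon}\le\nu_k$.

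\emph{Lower bound.} This is the main obstacle, since it requires localizing the eigenfunctions. Use an IMS partition of unity $\chi_0^2+\chi_1^2=1$ on $J_\varepsilon$, where $\chi_0$ is supported in $|y|\le2R_\varepsilon$ and equals $1$ on $|y|\le R_\varepsilon$. The IMS formula gives
\[
\widetilde\cH_\varepsilon=\chi_0\widetilde\cH_\varepsilon\chi_0+\chi_1\widetilde\cH_\varepsilon\chi_1-|\chi_0'|^2-|\chi_1'|^2,
\]
and the localization errors are $O(R_\varepsilon^{-2})$.

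On the support of $\chi_1$ we have $\varepsilon^{-1}W\ge c_0R_\varepsilon^2\to\infty$, so that piece is bounded below by a quantity tending to infinity. On the support of $\chi_0$, the potential satisfies $\varepsilon^{-1}W\ge(1-C\sqrt\varepsilon R_\varepsilon)\tfrac{W''(x_1)}2y^2$. That piece is therefore bounded below by a slightly scaled copy of $\cH_0$, whose eigenvalues are $(1-o(1))^{1/2}\nu_k$.

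Now take any two-dimensional subspace $V$ in the form domain and use min--max. Up to $o(1)$ errors, the quadratic form on $V$ is at least the $\chi_0$-part, which is controlled by the scaled harmonic oscillator, plus a large multiple of $\|\chi_1 w\|^2$. A standard argument, using the rank-one projection onto $\chi_0\phi_1$ for $k=2$, then yields $\liminf\tilde\mu_{k,\varepsilon}\ge\nu_k$. Combining the upper and lower bounds and undoing the scaling gives $\mu_{k,\varepsilon}/\varepsilon\to\nu_k$.
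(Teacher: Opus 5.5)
Your proof is correct, but it takes a different route from the paper's. The paper does not carry out the harmonic approximation itself. It extends $W|_{\overline J}$ to a nonnegative $\widetilde W\in C^\infty(\mathbb R)$ whose only zero is $x_1$ and which equals $1$ outside a compact set. It then quotes Simon's theorem for $-\varepsilon^2\partial_{xx}+\widetilde W$ on $L^2(\mathbb R)$ to get $\widetilde\mu_{k,\varepsilon}/\varepsilon\to\nu_k$. Finally, it quotes Helffer's comparison lemma to show that the Dirichlet eigenvalues on $J$ differ from the whole-line ones by $O(e^{-c/\varepsilon})$.

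You instead work directly on the rescaled interval $J_\varepsilon$. The cut-off Hermite functions already lie in $H_0^1(J_\varepsilon)$, and the IMS lower bound uses only the pointwise bound $W\ge c_0(x-x_1)^2$ on $\overline J$. So you need neither the extension of $W$ nor an Agmon-type comparison between the interval and the line; your argument would also work with Neumann conditions on $J$.

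The paper's route is shorter on the page and yields more: exactly two eigenvalues in $[0,C_*\varepsilon]$, and an exponentially small interval-versus-line discrepancy. However, later the paper only uses the limits themselves, through $\mu_{j,\varepsilon}\le2\nu_2\varepsilon$ and $\mu_{2,\varepsilon}-\mu_{1,\varepsilon}=\varepsilon(\nu_2-\nu_1+o(1))$. So your self-contained argument is an adequate replacement, and it even gives an explicit rate $O(\varepsilon^{1/4})$ with $R_\varepsilon=\varepsilon^{-1/8}$.

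One small point should be made explicit in your final step, because $\phi_1$ is the ground state of the unscaled $\cH_0$. On $\operatorname{supp}\chi_0$, let $\theta=1-O(\sqrt\varepsilon R_\varepsilon)$ and combine $-\partial_y^2\ge-\theta\partial_y^2$ with $\varepsilon^{-1}W\ge\theta\frac{W''(x_1)}{2}y^2$. This bounds the form of $\chi_0w$ below by $\theta\langle\cH_0\chi_0w,\chi_0w\rangle$. The condition $w\perp\chi_0\phi_1$ then gives $\theta\nu_2\|\chi_0w\|^2$ directly, which justifies your rank-one step.
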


\begin{proof}
Extend \(W\) from \(\overline J\) to a nonnegative
\(\widetilde W\in C^\infty(\mathbb R)\) with \(x_1\) as its only zero
and \(\widetilde W=1\) outside a compact set. Such an extension exists
because \(W>0\) away from \(x_1\) on this interval.
Moreover, \(\widetilde W''(x_1)=W''(x_1)>0\).

Choose \(C_*\) strictly between the second and third eigenvalues of
\(\cH_0\). The extension satisfies hypotheses (A1)--(A4) of
\cite{Simon1983}. Applying \cite[Theorem~1.1]{Simon1983} with
\(h=\widetilde W\), \(g=0\), \(\lambda=\varepsilon^{-1}\), we find that
\(-\varepsilon^2\partial_{xx}+\widetilde W\) on \(L^2(\mathbb R)\)
has exactly two eigenvalues in \([0,C_*\varepsilon]\) for small
\(\varepsilon\). Denoting them in increasing order by
\(\widetilde\mu_{1,\varepsilon}\), \(\widetilde\mu_{2,\varepsilon}\), we have
\begin{equation}
\lim_{\varepsilon\to0^+}
\frac{\widetilde\mu_{k,\varepsilon}}{\varepsilon}=\nu_k,
\qquad k=1,2.
\label{eq:extended-local-eigenvalue-limit}
\end{equation}
\Needspace{5\baselineskip}
Applying \cite[Lemma~3.4.3]{Helffer1988} to
\(J\subset\mathbb R\), with \(h=\varepsilon\),
gives exactly two local Dirichlet eigenvalues in
\([0,C_*\varepsilon]\), satisfying, for some \(C,c>0\) independent of
\(\varepsilon\),
\begin{equation}
|\mu_{k,\varepsilon}-\widetilde\mu_{k,\varepsilon}|
\le Ce^{-c/\varepsilon},\qquad k=1,2.
\label{eq:local-domain-comparison}
\end{equation}
The conclusion follows from \eqref{eq:extended-local-eigenvalue-limit}
and \eqref{eq:local-domain-comparison}.
\end{proof}

The eigenfunctions and test functions below may be chosen real-valued.
Define the half-interval operators by
\[
 \bigl(\cH_\varepsilon^{\pm}w\bigr)(x)
 :=-\varepsilon^2w''(x)+W(x)w(x),
 \quad x\in(0,1/2),
\]
with the boundary conditions
\[
 w'(0)=w'(1/2)=0\quad\text{for }\cH_\varepsilon^+,
 \qquad
 w'(0)=w(1/2)=0\quad\text{for }\cH_\varepsilon^-.
\]
The operators \(\cH_\varepsilon^\pm\) are self-adjoint on
\(L^2((0,1/2))\), with domains consisting of the
\(H^2((0,1/2))\) functions satisfying the respective boundary conditions.
Their eigenvalues are real; since the coefficients and boundary
conditions are real, their eigenfunctions can be chosen real-valued.
Define
\[
 d_1(x):=\left|\int_{x_1}^{x}\sqrt{W(s)}\,\dd s\right|,
 \qquad x\in[0,1/2].
\]
\Needspace{5\baselineskip}
\begin{lemma}[Weighted decay]
\label{lem:half-Agmon}
For \(\mathcal H\in\{\cH_\varepsilon^+,\cH_\varepsilon^-\}\)
and fixed \(C_0>0\), suppose that
\begin{equation}
 \mathcal H\psi_\varepsilon=\theta_\varepsilon\psi_\varepsilon,
 \qquad \|\psi_\varepsilon\|_{L^2((0,1/2))}=1,
 \quad\text{and}\quad 0\le\theta_\varepsilon\le C_0\varepsilon.
\label{eq:weighted-eigenvalue-equation}
\end{equation}
If \(K\subset[0,1/2]\setminus\{x_1\}\) is a nonempty compact set and
\(0<\kappa<\min_{x\in K}d_1(x)\), then there exists
\(C>0\), depending on \(K,\kappa,C_0\), such that, for
all sufficiently small \(\varepsilon\),
\begin{equation}
 \|\psi_\varepsilon\|_{L^2(K)}
 +\varepsilon\|\psi_\varepsilon'\|_{L^2(K)}
 \le C\exp(-\kappa/\varepsilon).
\label{eq:direct-half-Agmon}
\end{equation}
\end{lemma}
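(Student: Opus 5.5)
We prove this with the standard Agmon estimate: integrate the eigenvalue equation against $e^{2\Phi/\varepsilon}\psi_\varepsilon$ for a Lipschitz weight $\Phi$ built from $d_1$. Since $W$ vanishes on $[0,1/2]$ only at $x_1$, and $d_1$ is Lipschitz with $|d_1'|^2=W$ a.e., we fix $\delta\in(0,1)$ and set $\Phi:=(1-\delta)d_1$.

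\textbf{Step 1: energy identity.} For real $\psi$ satisfying either boundary condition, we multiply $\mathcal H\psi=\theta\psi$ by $e^{2\Phi/\varepsilon}\psi$ and integrate over $(0,1/2)$. The boundary terms vanish: at each endpoint either $\psi'=0$ or $\psi=0$. With $\phi:=e^{\Phi/\varepsilon}\psi$, the usual conjugation identity gives
\[
\varepsilon^2\int_0^{1/2}|\phi'|^2\,\dd x+\int_0^{1/2}\bigl(W-|\Phi'|^2-\theta_\varepsilon\bigr)\phi^2\,\dd x=0 .
\]
The cross terms $\pm\varepsilon\Phi'\phi\phi'$ cancel after integration by parts. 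This is justified because $\Phi$ is Lipschitz and $\Phi'(\Phi'\phi^2)$ is handled distributionally; if needed, we smooth $d_1$ slightly.

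\textbf{Step 2: splitting the domain.} We have $W-|\Phi'|^2-\theta_\varepsilon=(2\delta-\delta^2)W-\theta_\varepsilon$. Let $U_\varepsilon=\{x: W(x)\ge M\varepsilon\}$ with $M$ chosen large relative to $C_0/\delta$. On $U_\varepsilon$ the coefficient is at least $\delta W/2\ge\delta M\varepsilon/2$. On the complement, $|x-x_1|\le C\sqrt{\varepsilon}$ because $W''(x_1)>0$, so there $d_1\le C\varepsilon$ and $e^{2\Phi/\varepsilon}\le C$. Moving the negative part to the right-hand side gives
\[
\varepsilon^2\|\phi'\|^2+\frac{\delta M\varepsilon}{2}\int_{U_\varepsilon}\phi^2\le C\,C_0\varepsilon\int_{U_\varepsilon^c}\psi^2\le C\varepsilon .
\]
Hence $\int_K W e^{2\Phi/\varepsilon}\psi^2\le C$, and $W\ge c_K>0$ on $K$. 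Since $\Phi\ge(1-\delta)\min_K d_1>\kappa$ for $\delta$ small, we obtain $\|\psi_\varepsilon\|_{L^2(K)}\le C e^{-\kappa/\varepsilon}$.

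\textbf{Step 3: derivative bound.} Step 2 also gives $\varepsilon\|\phi'\|\le C$. Writing $\varepsilon\psi'=e^{-\Phi/\varepsilon}(\varepsilon\phi'-\Phi'\phi)$ and using $|\Phi'|\le C$ yields
\[
\varepsilon\|\psi'\|_{L^2(K)}\le Ce^{-\kappa/\varepsilon}\bigl(\varepsilon\|\phi'\|+\|\phi\|_{L^2(K')}\bigr),
\]
where $\phi$ is controlled on a compact neighborhood $K'$ of $K$ that still avoids $x_1$. Applying Step 2 on $K'$, with $\kappa$ retained by shrinking $\delta$, completes the proof.

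The main obstacle is the bookkeeping in Step 2: the near-well region must have size $O(\sqrt\varepsilon)$, so that both $e^{2\Phi/\varepsilon}$ stays bounded there and $\theta_\varepsilon\le C_0\varepsilon$ is absorbed elsewhere. This is where the nondegeneracy $W''(x_1)>0$ and the uniqueness of the zero of $W$ on $[0,1/2]$ are used.
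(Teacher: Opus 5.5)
Your proof is correct and follows the same route as the paper. The paper also uses the weighted identity, with the boundary term $-\varepsilon^2\bigl[e^{2\Phi/\varepsilon}\psi_\varepsilon'\psi_\varepsilon\bigr]_0^{1/2}$ vanishing by the mixed boundary conditions, then gets the derivative bound from the product rule; it simply cites the proof of Helffer's Proposition~3.3.1 for the Agmon estimate that you carry out explicitly with $\Phi=(1-\delta)d_1$ and the splitting $\{W\ge M\varepsilon\}$. Two small simplifications are available: for real $\psi$ the identity $\varepsilon^2\psi'(e^{2\Phi/\varepsilon}\psi)'=\varepsilon^2|\phi'|^2-|\Phi'|^2\phi^2$ holds pointwise, so no smoothing or distributional argument is needed, and the auxiliary set $K'$ in Step~3 is unnecessary because Step~2 already bounds $\|\phi\|_{L^2(K)}$.
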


\begin{proof}
At each endpoint of \((0,1/2)\), either
\(\psi_\varepsilon=0\) or \(\psi_\varepsilon'=0\). Thus the boundary term
in the weighted integration by parts used in
\cite[Theorem~3.1.1]{Helffer1988} vanishes:
\[
-\varepsilon^2
\left[e^{2\Phi/\varepsilon}\psi_\varepsilon'
\overline{\psi_\varepsilon}\right]_0^{1/2}=0
\]
for every real Lipschitz weight \(\Phi\).
Consequently, the proof of \cite[Proposition~3.3.1]{Helffer1988}
applies to both half-interval operators with \(h=\varepsilon\),
\(E=0\), and \(U=\{x_1\}\).
Put \(d_K=\min_{x\in K}d_1(x)\) and choose \(0<\rho<d_K-\kappa\).
Since \(d_1'\) is bounded, the product rule applied to the weighted
estimate (3.3.2) in that reference gives, for some
\(C_\rho>0\) independent of \(\varepsilon\),
\[
\|\psi_\varepsilon\|_{L^2(K)}
+\varepsilon\|\psi_\varepsilon'\|_{L^2(K)}
\le C_\rho e^{-(d_K-\rho)/\varepsilon}
\le C_\rho e^{-\kappa/\varepsilon}.
\qedhere
\]
\end{proof}

For each sign, let \(\{\alpha_{k,\varepsilon}^{\pm}\}_{k\ge1}\) be the
eigenvalues of \(\cH_\varepsilon^\pm\), listed in increasing order.

\begin{lemma}[Comparison of the first two eigenvalues]
\label{lem:Dirichlet-half-comparison}
There exist constants \(C_1,\kappa_1>0\)
such that, for all sufficiently small \(\varepsilon\),
\begin{equation}
\left|\alpha_{k,\varepsilon}^{\pm}-\mu_{k,\varepsilon}\right|
\le C_1e^{-\kappa_1/\varepsilon},
\qquad k=1,2.
\label{eq:direct-half-comparison}
\end{equation}
\end{lemma}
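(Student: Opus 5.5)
The idea is a two-sided comparison via min–max, using localized quasimodes in both directions. Both $\cH_\varepsilon^\pm$ and the Dirichlet operator $\cH_{\varepsilon,0}^{\mathrm D}$ on $J$ are self-adjoint with real eigenvalues. The interval $J=(x_1-3\ell,x_1+3\ell)$ lies compactly inside $(0,1/2)$, because $3\ell<\min\{x_1,1/2-x_1\}$. The well $x_1$ is the only zero of $W$ on $[0,1/2]$. Fix a cutoff $\chi\in C_c^\infty(J)$ with $0\le\chi\le1$ and $\chi=1$ on $(x_1-2\ell,x_1+2\ell)$. Its derivatives are supported in the compact set $K=\overline J\setminus(x_1-2\ell,x_1+2\ell)$, which avoids $x_1$. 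Let $\kappa_1>0$ be smaller than $\min_K d_1$; this minimum is positive since $W>0$ away from $x_1$.

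\emph{Upper bound $\alpha_{k,\varepsilon}^\pm\le\mu_{k,\varepsilon}+Ce^{-\kappa_1/\varepsilon}$.}
Take the first two normalized real Dirichlet eigenfunctions $w_1,w_2$ of $\cH_{\varepsilon,0}^{\mathrm D}$. By Lemma~\ref{lem:half-local-spectrum}, $\mu_{k,\varepsilon}\le C_0\varepsilon$. The Agmon argument of Lemma~\ref{lem:half-Agmon} applies equally on $J$ with Dirichlet conditions, since the boundary term vanishes there too. So $w_k$ and $\varepsilon w_k'$ are $O(e^{-\kappa/\varepsilon})$ in $L^2(K)$ for any $\kappa<\min_K d_1$.

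Set $\phi_k=\chi w_k$, extended by zero to $(0,1/2)$. These lie in the domain of both $\cH_\varepsilon^\pm$, because they vanish near both endpoints. We have
\[
(\cH_\varepsilon^\pm-\mu_{k,\varepsilon})\phi_k=-\varepsilon^2(2\chi'w_k'+\chi''w_k),
\]
which is $O(\varepsilon e^{-\kappa/\varepsilon})$ in $L^2$. In addition, the Gram matrix of $\phi_1,\phi_2$ is $I+O(e^{-2\kappa/\varepsilon})$. Restricting the min–max quadratic form to $\operatorname{span}\{\phi_1,\phi_2\}$ (or $\{\phi_1\}$ for $k=1$) and using these two facts yields the upper bound, after shrinking $\kappa$ slightly to $\kappa_1$.

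\emph{Lower bound.}
The upper bound gives $\alpha_{k,\varepsilon}^\pm\le C_0'\varepsilon$ for $k=1,2$. Lemma~\ref{lem:half-Agmon} then applies to the normalized eigenfunctions $\psi_1,\psi_2$ of $\cH_\varepsilon^\pm$, on the compact set $K'=[0,1/2]\setminus(x_1-2\ell,x_1+2\ell)$, which contains $K$. Set $\tilde\phi_k=\chi\psi_k\in H^2(J)\cap H_0^1(J)$. These satisfy
\[
(\cH_{\varepsilon,0}^{\mathrm D}-\alpha_{k,\varepsilon}^\pm)\tilde\phi_k=O(\varepsilon e^{-\kappa/\varepsilon})
\]
and are almost orthonormal. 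Min–max for $\cH_{\varepsilon,0}^{\mathrm D}$ then gives $\mu_{k,\varepsilon}\le\alpha_{k,\varepsilon}^\pm+Ce^{-\kappa_1/\varepsilon}$. Note that $W\ge0$ makes all eigenvalues nonnegative, so the hypothesis $\theta_\varepsilon\ge0$ of Lemma~\ref{lem:half-Agmon} holds.

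\emph{Main obstacle.}
The delicate step is the $k=2$ min–max estimate with approximate eigenvectors. The quadratic form on the span involves cross terms $\langle(\cH-\mu_{1})\phi_1,\phi_2\rangle$, and these must be controlled, not just the diagonal ones. I would bound the maximum of the Rayleigh quotient on the span directly. For a unit vector $\phi=\sum c_j\phi_j$,
\[
\langle\cH\phi,\phi\rangle=\sum_j|c_j|^2\mu_j+O(e^{-\kappa/\varepsilon})\le\mu_2\|\phi\|^2+O(e^{-\kappa/\varepsilon}),
\]
using the quasimode bound, the near-orthonormality, and $\mu_j\le C\varepsilon$. This suffices. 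A secondary point is justifying the Agmon bound for Dirichlet eigenfunctions on $J$; it follows from the same weighted integration by parts used in Lemma~\ref{lem:half-Agmon}.
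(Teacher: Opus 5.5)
Your proposal is correct, and the harder direction follows the paper. For $\mu_{k,\varepsilon}\le\alpha_{k,\varepsilon}^\pm+Ce^{-\kappa_1/\varepsilon}$ the paper also cuts off the first $k$ eigenfunctions of $\cH_\varepsilon^\pm$ by $\chi$, controls the cutoff error with Lemma~\ref{lem:half-Agmon}, and applies min--max to $\cH_{\varepsilon,0}^{\mathrm D}$. It phrases this with quadratic forms on the exact spectral subspace $F_{\pm,k}$, where the Rayleigh quotient is already at most $\alpha_{k,\varepsilon}^\pm$. Only the norm and energy defects of $w\mapsto\chi w$ then need estimating, so the cross terms you worry about never arise; your residual formulation also works.

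The real difference is the other direction. The paper gets $\alpha_{k,\varepsilon}^\pm\le\mu_{k,\varepsilon}$ exactly by extending functions in $H_0^1(J)$ by zero. The extension lies in the form domain of both $\cH_\varepsilon^+$ and $\cH_\varepsilon^-$ and preserves $L^2$ norms and energies, so min--max gives the inequality at once. This needs no quasimode, no error term, and no decay estimate for the Dirichlet eigenfunctions on $J$. Your construction with $\chi w_k$ yields only $\alpha_{k,\varepsilon}^\pm\le\mu_{k,\varepsilon}+Ce^{-\kappa_1/\varepsilon}$. It also needs an Agmon estimate on $J$, which is standard but is not among the paper's lemmas.

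Your symmetric argument is more robust: it would survive a comparison in which neither form domain contains the other. The paper's bracketing is shorter and gives the exact bound $\alpha_{j,\varepsilon}^\pm\le\mu_{j,\varepsilon}\le2\nu_2\varepsilon$, which it reuses. It uses it to supply the eigenvalue bound $\theta_\varepsilon\le C_0\varepsilon$ needed in Lemma~\ref{lem:half-Agmon}, and to bound $\lambda_{1,\varepsilon}$ in Lemma~\ref{lem:symmetric-double-well}. Your approximate version would also serve there after adjusting constants.
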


\begin{proof}
Choose \(\chi\in C_c^\infty(J)\) such that
\(0\le\chi\le1\) and
\begin{equation}
\chi(x)=1\quad\text{for }|x-x_1|\le\ell,
\qquad
\chi(x)=0\quad\text{for }|x-x_1|\ge2\ell.
\label{eq:half-cutoff-function}
\end{equation}
Extend \(\chi\) by zero to \((0,1/2)\). Set
\begin{equation}
 K_\chi:=\operatorname{supp}_{[0,1/2]}(1-\chi^2)
 \cup\operatorname{supp}\chi'
 \Subset[0,1/2]\setminus\{x_1\}.
\label{eq:half-cutoff-support}
\end{equation}
Choose \(0<\kappa_1<\min_{x\in K_\chi}d_1(x)\).

For \(w\in H_0^1(J)\), define its extension to
\((0,1/2)\) by
\begin{equation}
\widetilde w(x):=
\begin{cases}
w(x),&|x-x_1|<3\ell,\\
0,&|x-x_1|\ge3\ell.
\end{cases}
\label{eq:zero-extension}
\end{equation}
Then
\[
\widetilde w
\in\{v\in H^1((0,1/2)):v(1/2)=0\}\subset H^1((0,1/2)).
\]
Moreover,
\begin{subequations}
\label{eq:zero-extension-identities}
\begin{equation}
\|\widetilde w\|_{L^2((0,1/2))}^2
=\|w\|_{L^2(J)}^2,
\label{eq:zero-extension-L2}
\end{equation}
and
\begin{equation}
\int_0^{1/2}\bigl(\varepsilon^2|\widetilde w'|^2
+W|\widetilde w|^2\bigr)\,\dd x
=\int_J
\bigl(\varepsilon^2|w'|^2+W|w|^2\bigr)\,\dd x.
\label{eq:zero-extension-integral}
\end{equation}
\end{subequations}
By \eqref{eq:zero-extension-L2}--\eqref{eq:zero-extension-integral},
the extension preserves the Rayleigh quotient and the dimension of every
trial subspace. The min--max principle
\cite[Theorem~A.1.6]{FournaisHelffer2010}, together with
Lemma~\ref{lem:half-local-spectrum}, therefore gives, for small
\(\varepsilon\),
\begin{equation}
\alpha_{j,\varepsilon}^{\pm}\le\mu_{j,\varepsilon}
\le2\nu_2\varepsilon,\qquad j=1,2.
\label{eq:half-eigenvalue-upper-bound}
\end{equation}
Let \(F_{\pm,k}\) be the span of the first \(k\) \(L^2((0,1/2))\)-normalized
eigenfunctions of \(\cH_\varepsilon^\pm\). Applying
Lemma~\ref{lem:half-Agmon} to these eigenfunctions and then taking a
linear combination gives, with \(C_1\) chosen to include the fixed cutoff
factors,
\begin{equation}
\|w\|_{L^2(K_\chi)}+\varepsilon\|w'\|_{L^2(K_\chi)}
\le C_1e^{-\kappa_1/\varepsilon}\|w\|_{L^2((0,1/2))},
\qquad w\in F_{\pm,k}.
\label{eq:half-trial-space-decay}
\end{equation}
For \(w\in F_{\pm,k}\), \eqref{eq:half-trial-space-decay} and
\(\operatorname{supp}(1-\chi^2)\subset K_\chi\) give
\begin{subequations}
\begin{equation}
\begin{aligned}
\|w\|_{L^2((0,1/2))}^2
-\|\chi w\|_{L^2(J)}^2
&=\int_0^{1/2}(1-\chi^2)|w|^2\,\dd x\le C_1e^{-\kappa_1/\varepsilon}\|w\|_{L^2((0,1/2))}^2.
\end{aligned}
\label{eq:cutoff-norm}
\end{equation}
Next, expand \((\chi w)'=\chi w'+\chi'w\). In the difference of the two
integrals, all terms are supported in \(K_\chi\). The bounds in
\eqref{eq:half-trial-space-decay}, combined with the Cauchy--Schwarz
inequality, therefore give
\begin{equation}
\begin{aligned}
&\left|\int_J
(\varepsilon^2|(\chi w)'|^2+W|\chi w|^2)\,\dd x
-\int_0^{1/2}(\varepsilon^2|w'|^2+W|w|^2)\,\dd x\right|\le C_1e^{-\kappa_1/\varepsilon}\|w\|_{L^2((0,1/2))}^2.
\end{aligned}
\label{eq:cutoff-energy}
\end{equation}
\end{subequations}
Estimate \eqref{eq:cutoff-norm} shows that
\(w\mapsto\chi w\) is injective on \(F_{\pm,k}\) for small
\(\varepsilon\), and
\(\chi F_{\pm,k}\subset H_0^1(J)\).
Using \(\chi F_{\pm,k}\) in the \(k\)-th-eigenvalue formula for
\(\cH_{\varepsilon,0}^{\mathrm D}\), together with
\eqref{eq:cutoff-norm} and \eqref{eq:cutoff-energy}, gives, for small
\(\varepsilon\),
\[
\begin{aligned}
\mu_{k,\varepsilon}
&\le\sup_{w\in F_{\pm,k}\setminus\{0\}}
\frac{\int_J\bigl(\varepsilon^2|(\chi w)'|^2+W|\chi w|^2\bigr)\,\dd x}
{\|\chi w\|_{L^2(J)}^2}\\
&\le\frac{\alpha_{k,\varepsilon}^{\pm}+C_1e^{-\kappa_1/\varepsilon}}
{1-C_1e^{-\kappa_1/\varepsilon}}
\le\alpha_{k,\varepsilon}^{\pm}+4C_1e^{-\kappa_1/\varepsilon}.
\end{aligned}
\]
After increasing \(C_1\), combining this with
\(\alpha_{k,\varepsilon}^{\pm}\le\mu_{k,\varepsilon}\) proves
\eqref{eq:direct-half-comparison}.
\end{proof}

We now turn to the Neumann operator on the full interval,
\begin{equation}
\cH_\varepsilon w=-\varepsilon^2w''+Ww,
\qquad
\operatorname{Dom}(\cH_\varepsilon)=H_N^2((0,1)),
\label{eq:schrodinger}
\end{equation}
where
\[
H_N^2((0,1))
:=\{w\in H^2((0,1)):w'(0)=w'(1)=0\}.
\]
Let \(\lambda_{1,\varepsilon}<\lambda_{2,\varepsilon}<\lambda_{3,\varepsilon}\)
be its first three eigenvalues. Set
\begin{equation}
\Lambda_{1,\varepsilon}:=\lambda_{2,\varepsilon}-\lambda_{1,\varepsilon}
\quad\text{and}\quad
\Lambda_{2,\varepsilon}:=\lambda_{3,\varepsilon}-\lambda_{1,\varepsilon}.
\label{eq:spectral-gaps}
\end{equation}

\begin{lemma}[First three eigenvalues]
\label{lem:symmetric-double-well}
For all sufficiently small \(\varepsilon\),
\begin{equation}
0<\lambda_{1,\varepsilon}\le2\nu_2\varepsilon,
\qquad
0<\Lambda_{1,\varepsilon}
\le2C_1e^{-\kappa_1/\varepsilon}
\quad\text{and}\quad
\Lambda_{2,\varepsilon}\ge\frac{\nu_2-\nu_1}{4}\varepsilon.
\label{eq:scale-separation}
\end{equation}
\end{lemma}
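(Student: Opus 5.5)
The plan is to use the reflection symmetry \(W(1-x)=W(x)\) to split \(\cH_\varepsilon\) into its even and odd parts. Even Neumann eigenfunctions on \((0,1)\) restrict to eigenfunctions of \(\cH_\varepsilon^+\) on \((0,1/2)\), and odd ones restrict to eigenfunctions of \(\cH_\varepsilon^-\). Conversely, every eigenfunction of \(\cH_\varepsilon^\pm\) extends by even or odd reflection about \(x=1/2\) to an \(H^2_N((0,1))\) eigenfunction of \(\cH_\varepsilon\). This extension is legitimate: for the even extension \(w'(1/2)=0\) makes it \(C^1\), and for the odd extension \(w(1/2)=0\) does the same; in both cases \(W\) is even, so the equation holds across \(1/2\). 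Since \(L^2((0,1))\) is the orthogonal sum of the even and odd subspaces, the Neumann spectrum of \(\cH_\varepsilon\) is exactly \(\{\alpha^+_{k,\varepsilon}\}\cup\{\alpha^-_{k,\varepsilon}\}\), counted with multiplicity. Neumann eigenvalues in one dimension are simple, so this union contains no repeated values.

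\textbf{Identifying \(\lambda_{1,\varepsilon},\lambda_{2,\varepsilon},\lambda_{3,\varepsilon}\).}

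\emph{Positivity.} Since \(W\ge0\) and \(W\not\equiv0\), the quadratic form is positive on nonzero functions, because constants give \(\int W|w|^2>0\). Hence all eigenvalues are \(>0\).

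\emph{Location of the first two.} The ground state \(\lambda_{1,\varepsilon}\) is simple with a positive eigenfunction, which is even, so \(\lambda_{1,\varepsilon}=\alpha^+_{1,\varepsilon}\). By Lemma~\ref{lem:Dirichlet-half-comparison} and Lemma~\ref{lem:half-local-spectrum}, \(\alpha^\pm_{1,\varepsilon}=\mu_{1,\varepsilon}+O(e^{-\kappa_1/\varepsilon})\approx\nu_1\varepsilon\) and \(\alpha^\pm_{2,\varepsilon}\approx\nu_2\varepsilon\). In particular \(\alpha^\pm_{1,\varepsilon}\le\mu_{1,\varepsilon}\le2\nu_2\varepsilon\) by \eqref{eq:half-eigenvalue-upper-bound}. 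For small \(\varepsilon\), the two numbers \(\alpha^\pm_{1,\varepsilon}\) are therefore both below \(\min\{\alpha^+_{2,\varepsilon},\alpha^-_{2,\varepsilon}\}\), since \(\nu_1<\nu_2\). So the two smallest elements of the union are \(\alpha^+_{1,\varepsilon}\) and \(\alpha^-_{1,\varepsilon}\). The first is \(\lambda_{1,\varepsilon}\), hence \(\lambda_{2,\varepsilon}=\alpha^-_{1,\varepsilon}\).

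\emph{The third eigenvalue.} Next, \(\lambda_{3,\varepsilon}=\min\{\alpha^+_{2,\varepsilon},\alpha^-_{2,\varepsilon}\}\ge\mu_{2,\varepsilon}-C_1e^{-\kappa_1/\varepsilon}\).

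\textbf{The three estimates.}

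\emph{First gap.} \(\Lambda_{1,\varepsilon}=\alpha^-_{1,\varepsilon}-\alpha^+_{1,\varepsilon}\) is bounded in absolute value by \(2C_1e^{-\kappa_1/\varepsilon}\), by the triangle inequality through \(\mu_{1,\varepsilon}\). Its strict positivity follows from simplicity of Neumann eigenvalues together with the ordering \(\lambda_1<\lambda_2\). Equivalently, the odd ground state has a sign-changing extension, so it cannot be the principal eigenfunction. One can also see directly that \(\alpha^+_1\le\alpha^-_1\), since the odd eigenfunction's absolute value is a competitor for the even problem, with equality excluded by simplicity.

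\emph{Second gap.} \(\Lambda_{2,\varepsilon}\ge\mu_{2,\varepsilon}-\mu_{1,\varepsilon}-2C_1e^{-\kappa_1/\varepsilon}\). By Lemma~\ref{lem:half-local-spectrum}, \((\mu_{2,\varepsilon}-\mu_{1,\varepsilon})/\varepsilon\to\nu_2-\nu_1\), so \(\Lambda_{2,\varepsilon}\ge\frac{\nu_2-\nu_1}{2}\varepsilon-2C_1e^{-\kappa_1/\varepsilon}\ge\frac{\nu_2-\nu_1}{4}\varepsilon\) for small \(\varepsilon\).

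\emph{Bound on \(\lambda_{1,\varepsilon}\).} \(\lambda_{1,\varepsilon}=\alpha^+_{1,\varepsilon}\le2\nu_2\varepsilon\) follows from \eqref{eq:half-eigenvalue-upper-bound}.

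The only step requiring care is the bookkeeping that identifies which half-interval eigenvalues give \(\lambda_{2,\varepsilon}\) and \(\lambda_{3,\varepsilon}\). In particular, I need that no higher half-problem eigenvalue \(\alpha^\pm_{k,\varepsilon}\) with \(k\ge3\) slips below \(\alpha^\pm_{2,\varepsilon}\). This is automatic from the increasing ordering of each half-spectrum, so the decomposition argument closes the proof.
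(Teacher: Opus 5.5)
Your proposal is correct and follows essentially the same route as the paper: you use the even/odd reflection decomposition $\cH_\varepsilon\cong\cH_\varepsilon^+\oplus\cH_\varepsilon^-$ and identify $\lambda_{1,\varepsilon}=\alpha_{1,\varepsilon}^+$, $\lambda_{2,\varepsilon}=\alpha_{1,\varepsilon}^-$, $\lambda_{3,\varepsilon}=\min\{\alpha_{2,\varepsilon}^+,\alpha_{2,\varepsilon}^-\}$ via Lemmas~\ref{lem:half-local-spectrum} and~\ref{lem:Dirichlet-half-comparison}, from which the gap bounds follow. The only minor difference is that you obtain the strict inequality $\alpha_{1,\varepsilon}^+<\alpha_{1,\varepsilon}^-$ from simplicity of one-dimensional Neumann eigenvalues and evenness of the positive ground state, whereas the paper cites Amann's strict monotonicity of the principal eigenvalue when a Dirichlet condition is replaced by a Neumann one.
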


\begin{proof}
Since \(W(x)=W(1-x)\), reflection about \(x=1/2\) gives the following
decomposition into the two half-interval problems.
Define the isometries
\(\mathcal U_\pm:L^2((0,1/2))\to L^2((0,1))\) by
\[
 (\mathcal U_\pm w)(x):=\frac1{\sqrt2}
 \begin{cases}
  w(x),&x\in(0,1/2),\\
  \pm w(1-x),&x\in(1/2,1).
\end{cases}
\]
Let
\(\mathcal U:L^2((0,1/2))\oplus L^2((0,1/2))
\to L^2((0,1))\) be
\(\mathcal U(w_+,w_-)=\mathcal U_+w_++\mathcal U_-w_-\).
The map \(\mathcal U\) preserves the \(L^2\) inner product and maps onto
\(L^2((0,1))\). Moreover,
\begin{equation}
 \mathcal U^*\cH_\varepsilon\mathcal U
 =\cH_\varepsilon^+\oplus\cH_\varepsilon^-.
 \label{eq:half-interval-decomposition}
\end{equation}
Replacing the Dirichlet condition at \(x=1/2\) by a Neumann
condition strictly decreases the first eigenvalue by
\cite[Theorem~17, p.~18]{Amann2005}; hence
\(\alpha_{1,\varepsilon}^{+}<\alpha_{1,\varepsilon}^{-}\).
With \(C_1,\kappa_1\) from Lemma~\ref{lem:Dirichlet-half-comparison},
\eqref{eq:direct-half-comparison} gives
\begin{equation}
0<\alpha_{1,\varepsilon}^{-}-\alpha_{1,\varepsilon}^{+}
\le2C_1e^{-\kappa_1/\varepsilon}=o(\varepsilon).
\label{eq:half-ground-splitting}
\end{equation}
Lemmas~\ref{lem:half-local-spectrum} and
\ref{lem:Dirichlet-half-comparison} also give
\begin{equation}
\begin{aligned}
\alpha_{2,\varepsilon}^{\pm}-\alpha_{1,\varepsilon}^{\pm}
&\ge\mu_{2,\varepsilon}-\mu_{1,\varepsilon}
-2C_1e^{-\kappa_1/\varepsilon}\\
&=\varepsilon\bigl(\nu_2-\nu_1+o(1)\bigr)
\ge\frac{\nu_2-\nu_1}{4}\varepsilon.
\end{aligned}
\label{eq:half-eigenvalue-gap}
\end{equation}
Equations \eqref{eq:half-ground-splitting} and
\eqref{eq:half-eigenvalue-gap} give, for all
sufficiently small \(\varepsilon\),
\[
\alpha_{1,\varepsilon}^{-}
<\min\{\alpha_{2,\varepsilon}^{+},\alpha_{2,\varepsilon}^{-}\}.
\]
By \eqref{eq:half-interval-decomposition}, the first three eigenvalues of
\(\cH_\varepsilon\) satisfy
\begin{equation}
\lambda_{1,\varepsilon}=\alpha_{1,\varepsilon}^{+},
\qquad
\lambda_{2,\varepsilon}=\alpha_{1,\varepsilon}^{-},
\qquad\text{and}\qquad
 \lambda_{3,\varepsilon}
 =\min\{\alpha_{2,\varepsilon}^{+},\alpha_{2,\varepsilon}^{-}\}.
\label{eq:full-half-spectrum-ordering}
\end{equation}
Since \(W\ge0\) and \(W\not\equiv0\), we have
\(\lambda_{1,\varepsilon}>0\).
The bound for \(\Lambda_{1,\varepsilon}\) follows from
\eqref{eq:half-ground-splitting} and \eqref{eq:full-half-spectrum-ordering}.
The upper bound for
\(\lambda_{1,\varepsilon}\) follows from
\(\alpha_{1,\varepsilon}^{+}\le\mu_{1,\varepsilon}\) and
the second inequality in \eqref{eq:half-eigenvalue-upper-bound}. Finally,
\eqref{eq:half-eigenvalue-gap} and
\eqref{eq:full-half-spectrum-ordering} give
\[
\Lambda_{2,\varepsilon}
\ge\min\{
\alpha_{2,\varepsilon}^{+}-\alpha_{1,\varepsilon}^{+},
\alpha_{2,\varepsilon}^{-}-\alpha_{1,\varepsilon}^{-}\}
\ge\frac{\nu_2-\nu_1}{4}\varepsilon.
\]
This proves \eqref{eq:scale-separation}.
\end{proof}

\subsection{A target matrix and positive coupling functions}
\label{subsec:target-couplings}

We next choose the target matrix and construct strictly positive coupling functions
with prescribed inner products.

Choose real \(L^2((0,1))\)-normalized eigenfunctions
\(\phi_{1,\varepsilon},\phi_{2,\varepsilon}\) of \(\cH_\varepsilon\)
corresponding to \(\lambda_{1,\varepsilon},\lambda_{2,\varepsilon}\),
with \(\phi_{1,\varepsilon}>0\) and \(\phi_{2,\varepsilon}(x_1)>0\).
Define the two combinations
\begin{equation}
\zeta_{1,\varepsilon}
:=\frac{\phi_{1,\varepsilon}+\phi_{2,\varepsilon}}{\sqrt2}
\quad\text{and}\quad
\zeta_{2,\varepsilon}
:=\frac{\phi_{1,\varepsilon}-\phi_{2,\varepsilon}}{\sqrt2}.
\label{eq:localized-modes}
\end{equation}

For the concentrated-prey construction, we use the combinations
\(\zeta_{1,\varepsilon},\zeta_{2,\varepsilon}\) from
\eqref{eq:localized-modes} as the two
selected prey functions and \(\1\) as the predator function. Since
\(\partial_{xx}\1=0\), this third function has zero diffusion term, whereas
every nonconstant Neumann predator eigenfunction has diffusion eigenvalue at most
\(-D\pi^2\). We therefore select
\begin{equation}
\cX_\varepsilon
=\operatorname{span}_{\C}
\bigl\{(\zeta_{1,\varepsilon},0),
(\zeta_{2,\varepsilon},0),(0,\1)\bigr\}.
\label{eq:slow-space}
\end{equation}
We next design a finite-dimensional crossing on the three functions in
\eqref{eq:slow-space}.
Fix \(L>4\), and choose
\begin{equation}
0<\gamma_0<\min\left\{1,\frac L2-1\right\}.
\label{eq:Lgamma-range}
\end{equation}
We prescribe the target matrix
\begin{equation}
M_\gamma(\eta):=
\begin{pmatrix}
-1&1/2&-L\\
1/2&-1&-1\\
1&\eta L&-\gamma
\end{pmatrix},
\qquad 0\le\gamma\le\gamma_0,\quad \eta>0.
\label{eq:target-three-mode-matrix}
\end{equation}
The coefficient choices in \eqref{eq:block-requirements} will realize its
diagonal blocks, while the four coupling moments below will realize the
remaining row and column; the exact compression is verified in
\eqref{eq:compression}.
For \(0\le\gamma\le\gamma_0\), set
\begin{equation}
\eta_*(\gamma)
:=\frac{2\bigl(1+4\gamma+2\gamma^2+L(1+\gamma)\bigr)}
{L\bigl(L-2(1+\gamma)\bigr)},
\qquad
\omega_\gamma
:=\sqrt{\tfrac34+2\gamma+L(1+\eta_*(\gamma))}.
\label{eq:matrix-threshold}
\end{equation}
Writing \(I_3\) for the \(3\times3\) identity matrix, a direct calculation gives
\begin{equation}
\begin{aligned}
\det(zI_3-M_\gamma(\eta))
={}&(z+2+\gamma)(z^2+\omega_\gamma^2)\\
&+L(\eta-\eta_*(\gamma))(z+1+L/2).
\end{aligned}
\label{eq:target-characteristic-polynomial}
\end{equation}
Writing \(\sigma(T)\) for the spectrum of an operator or matrix \(T\),
we obtain
\begin{equation}
\sigma\bigl(M_\gamma(\eta_*(\gamma))\bigr)
=\{-(2+\gamma),\,i\omega_\gamma,\,-i\omega_\gamma\}.
\label{eq:target-critical-spectrum}
\end{equation}
These eigenvalues are simple. Differentiating
\eqref{eq:target-characteristic-polynomial} at the root
\(z_{+,\gamma}(\eta_*(\gamma))=i\omega_\gamma\) yields
\begin{equation}
\left.\frac{\dd}{\dd\eta}\operatorname{Re} z_{+,\gamma}(\eta)
\right|_{\eta=\eta_*(\gamma)}
=\frac{L\bigl(L-2(1+\gamma)\bigr)}
{4\bigl(\omega_\gamma^2+(2+\gamma)^2\bigr)}>0.
\label{eq:target-crossing-direction}
\end{equation}
Thus the conjugate pair crosses from left to right, while the third
eigenvalue remains negative near \(\eta_*(\gamma)\).

The explicit formula in \eqref{eq:matrix-threshold} shows that \(\eta_*\) is
positive and continuous on \([0,\gamma_0]\). We may therefore choose an interval
\begin{equation}
I=[\eta_{\mathrm L},\eta_{\mathrm R}]\Subset(0,\infty),\qquad
\eta_*\bigl([0,\gamma_0]\bigr)\Subset(\eta_{\mathrm L},\eta_{\mathrm R}).
\label{eq:coupling-parameter-interval}
\end{equation}
Fix \(I\) for the remainder of the paper.

\Needspace{12\baselineskip}
\begin{lemma}[Positive coupling functions]
\label{lem:positive-profiles}
For all sufficiently small \(\varepsilon\), there are strictly positive functions
\(k_{b,\varepsilon},k_{c,\varepsilon,\eta}\in C^\infty([0,1])\), depending smoothly on
\(\eta\in I\),
such that
\begin{equation}
\begin{pmatrix}
\langle k_{b,\varepsilon},\zeta_{1,\varepsilon}\rangle&
\langle k_{b,\varepsilon},\zeta_{2,\varepsilon}\rangle\\
\langle k_{c,\varepsilon,\eta},\zeta_{1,\varepsilon}\rangle&
\langle k_{c,\varepsilon,\eta},\zeta_{2,\varepsilon}\rangle
\end{pmatrix}
=
\begin{pmatrix}L&1\\1&\eta L\end{pmatrix},
\label{eq:moments}
\end{equation}
and
\begin{equation}
K_\varepsilon:=
1+\sup_{\eta\in I}
\bigl(
\|k_{b,\varepsilon}\|_{C^1([0,1])}
+\|k_{c,\varepsilon,\eta}\|_{C^1([0,1])}
+\|\partial_\eta k_{c,\varepsilon,\eta}\|_{C^1([0,1])}
\bigr)
 \le C_{\mathrm{pr}}\varepsilon^{-2},
\label{eq:profile-bound}
\end{equation}
where \(C_{\mathrm{pr}}>0\) is independent of \(\varepsilon,\eta\).
\end{lemma}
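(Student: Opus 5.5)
Since $\zeta_{1,\varepsilon}$ and $\zeta_{2,\varepsilon}$ are orthonormal in $L^2((0,1))$, a natural starting point is the ansatz
\[
k=\beta+p\,\zeta_{1,\varepsilon}+q\,\zeta_{2,\varepsilon}
\]
with a large positive constant $\beta$ and real coefficients $p,q$. The moments of such a function are
\[
\langle k,\zeta_{j,\varepsilon}\rangle=\beta\langle\1,\zeta_{j,\varepsilon}\rangle+(p,q)_j .
\]
We therefore take
\[
p=L-\beta\langle\1,\zeta_{1,\varepsilon}\rangle,\qquad q=1-\beta\langle\1,\zeta_{2,\varepsilon}\rangle
\]
for $k_{b,\varepsilon}$. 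For $k_{c,\varepsilon,\eta}$ we take
\[
p=1-\beta\langle\1,\zeta_{1,\varepsilon}\rangle,\qquad q=\eta L-\beta\langle\1,\zeta_{2,\varepsilon}\rangle .
\]
These choices give \eqref{eq:moments} exactly, and the dependence on $\eta$ is affine, hence smooth. The functions are $C^\infty$ by elliptic regularity, since $W$ is smooth.

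The key step is positivity, i.e.\ keeping $|p\zeta_1+q\zeta_2|<\beta$ pointwise. This is where the choice of $\beta$ matters: the terms $\beta\langle\1,\zeta_{j,\varepsilon}\rangle$ inside $p,q$ grow with $\beta$ and compete against it. I expect this to be the main obstacle, and the plan is to control both sides by the scale $\varepsilon^{1/2}$.

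First, the sizes of the integrals. The functions $\zeta_{j,\varepsilon}$ are localized near $x_j$: by Lemma~\ref{lem:half-Agmon}, together with the reflection structure of $\phi_{1,\varepsilon},\phi_{2,\varepsilon}$ from the proof of Lemma~\ref{lem:symmetric-double-well}, their mass lives in an $O(\sqrt\varepsilon)$ neighbourhood of $x_j$ (harmonic-oscillator scaling $y=(x-x_1)/\sqrt\varepsilon$). Hence $\|\zeta_{j,\varepsilon}\|_{L^1}\le C\varepsilon^{1/4}$, so $|\langle\1,\zeta_{j,\varepsilon}\rangle|\le C\varepsilon^{1/4}$.

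Second, the sup norms. From the eigen-equation $\varepsilon^2\phi''=(W-\lambda)\phi$ with $L^2$ normalization, a rescaled Sobolev argument gives
\[
\|\zeta_{j,\varepsilon}\|_\infty\le C\varepsilon^{-1/4},\qquad \|\zeta_{j,\varepsilon}'\|_\infty\le C\varepsilon^{-3/4}.
\]
This follows by bounding $\varepsilon^2\|\phi''\|_{L^2}\le C\varepsilon$ and $\varepsilon\|\phi'\|_{L^2}^2\le$ the eigenvalue $\lesssim 1$, then interpolating on the rescaled interval.

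Now combine the two. We have
\[
|p\zeta_1+q\zeta_2|\le \bigl(L+1+\eta_{\mathrm R}L+2C\beta\varepsilon^{1/4}\bigr)\,C\varepsilon^{-1/4}.
\]
The $\beta$-dependent part contributes $2C^2\beta$. That coefficient need not be less than $1$, so the naive choice of $\beta$ may fail. To fix this, I would replace the constant $\beta$ by $\beta g$, where $g>0$ is a smooth function with $g=1$ away from $x_1,x_2$ and $g$ very small near them (say $g=\delta$ on $(x_j-2\ell,x_j+2\ell)$). Then $\langle g,\zeta_{j,\varepsilon}\rangle=\delta\,O(\varepsilon^{1/4})+O(e^{-\kappa/\varepsilon})$ by the Agmon bound.

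Positivity is checked on two regions. Near $x_j$ we need $\beta\delta>C\varepsilon^{-1/4}(L+1+\eta_{\mathrm R}L)+C\beta\delta$; this fails if $C\ge1$. The cleaner alternative is to bound $|\langle\1,\zeta_{j,\varepsilon}\rangle\,\zeta_{j,\varepsilon}(x)|$ directly: the product of the $L^1$ and $L^\infty$ norms behaves like $\int\!\varphi\cdot\sup\varphi$ for the rescaled Gaussian. So the $\beta$-term near $x_j$ is at most $c_0\beta g$-comparable, and choosing the profile of $g$ as a multiple of $\zeta$-shaped bumps makes it dominated. Away from $x_j$ we have $g=1$ and $\zeta$ is exponentially small, so $\beta$ dominates. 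Taking $\beta\sim\varepsilon^{-1/4}$ then suffices.

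For the norm bound \eqref{eq:profile-bound}, the $C^1$ norms are controlled by $\beta\|g\|_{C^1}+(|p|+|q|)\,\varepsilon^{-3/4}$. This is $O(\varepsilon^{-1})$, well within the required $C_{\mathrm{pr}}\varepsilon^{-2}$. The $\eta$-derivative is just $L\zeta_{2,\varepsilon}$, which is bounded similarly. All constants depend only on $I$, $L$ and $W$.
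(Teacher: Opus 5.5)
\textbf{There is a genuine gap: positivity is not established.} Your ansatz $k=\beta g+p\,\zeta_{1,\varepsilon}+q\,\zeta_{2,\varepsilon}$ uses the $\zeta_{j,\varepsilon}$ themselves, and these change sign. For $x\in(1/2,1)$ one has $\zeta_{1,\varepsilon}(x)=\tfrac12\bigl(g_\varepsilon^+-g_\varepsilon^-\bigr)(1-x)$, where $g_\varepsilon^\pm$ are the half-interval ground states. This tail sits at the \emph{opposite well} $x_2$. There Lemma~\ref{lem:half-Agmon} says nothing, and your proposal controls it only through $\|\zeta_{j,\varepsilon}\|_\infty\le C\varepsilon^{-1/4}$.

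With that crude bound, the constant near the wells must be of size $\gtrsim\varepsilon^{-1/4}$. The feedback term $-\beta\langle g,\zeta_{j,\varepsilon}\rangle\zeta_{j,\varepsilon}$ is then of the same order as $\beta g$. In the harmonic approximation you invoke, the ratio at the well centre is
\[
\langle\1,\zeta_{1,\varepsilon}\rangle\,\zeta_{1,\varepsilon}(x_1)\to\|\psi_0\|_{L^1}\,\psi_0(0)=\sqrt2>1
\]
for the normalized Gaussian ground state $\psi_0$. So the $\beta$-terms are net negative at $x_1$.

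Replacing $\beta$ by $\beta g$ does not change this. Near the wells both $\beta g$ and $\beta\langle g,\zeta_{j,\varepsilon}\rangle\zeta_{j,\varepsilon}$ scale by the same factor $\delta$, so the change only renames $\beta\delta$. What remains is an upper bound $\beta\delta\lesssim L\varepsilon^{-1/4}$, coming from the positive $L\zeta_{1,\varepsilon}$ term, set against a lower bound of the same order, coming from the uncontrolled negative part of the other $\zeta$. You do not show these two bounds are compatible. You yourself note that the direct estimate fails, and the closing sentence about ``$\zeta$-shaped bumps'' is not an argument.

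\textbf{Two ways to close the gap.}
\begin{enumerate}
\item[(a)] Prove a sup-norm tunneling estimate $\|g_\varepsilon^+-g_\varepsilon^-\|_{L^\infty((0,1/2))}\to0$. Then a fixed small constant $\beta$ works, because
\[
p\zeta_{1,\varepsilon}+q\zeta_{2,\varepsilon}=\min\{p,q\}\sqrt2\,\phi_{1,\varepsilon}+|p-q|\,\zeta_{j,\varepsilon}
\]
for the appropriate $j$, and the negative part of $\zeta_{j,\varepsilon}$ would then be small. This is an additional spectral argument that your proposal does not contain.
\item[(b)] Follow the paper and avoid sign-changing functions entirely. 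Set $\rho_{1,\varepsilon}=\chi\zeta_{1,\varepsilon}/d_\varepsilon$ and $\rho_{2,\varepsilon}=\chi^\sharp\zeta_{2,\varepsilon}/d_\varepsilon$. These are nonnegative because $\zeta_{1,\varepsilon}>0$ on $(0,1/2)\supset\operatorname{supp}\chi$. Their cross moment
\[
q_\varepsilon=\tfrac1{4d_\varepsilon}\int\chi\bigl[(g_\varepsilon^+)^2-(g_\varepsilon^-)^2\bigr]
\]
is exponentially small: by normalization it equals minus the same integral against $1-\chi$, which Lemma~\ref{lem:half-Agmon} controls. Take $k=\delta_0\1+\beta_1\rho_{1,\varepsilon}+\beta_2\rho_{2,\varepsilon}$ with $\boldsymbol\beta=G_\varepsilon^{-1}\mathbf y$. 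The Gram matrix $G_\varepsilon$ is near the identity, and $\mathbf y$ has positive components because $|\langle\1,\zeta_{j,\varepsilon}\rangle|\le1$ and $\delta_0$ is small. Then $k\ge\delta_0$ is automatic, no pointwise information on $\zeta_{j,\varepsilon}$ is needed, and the $\varepsilon^{-2}$ $C^1$ bound follows from the $H^2$ estimate.
\end{enumerate}

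\textbf{Minor point.} Your $L^1$ and $L^\infty$ localization bounds at scale $\sqrt\varepsilon$ do not follow from Lemma~\ref{lem:half-Agmon}, which gives decay only at fixed distance from $x_1$. They can be derived from $\int W|g_\varepsilon^\pm|^2\le C\varepsilon$, but they are unnecessary in route (b).
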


\begin{proof}
Let \(g_\varepsilon^\pm\) be the positive
\(L^2((0,1/2))\)-normalized first eigenfunctions of \(\cH_\varepsilon^\pm\).
Equation \eqref{eq:half-interval-decomposition}, together with the first two identities in
\eqref{eq:full-half-spectrum-ordering}, gives
\(\phi_{1,\varepsilon}=g_\varepsilon^+/\sqrt2\) and
\(\phi_{2,\varepsilon}=g_\varepsilon^-/\sqrt2\) on \((0,1/2)\).
The signs follow from \(g_\varepsilon^\pm>0\),
\(\phi_{1,\varepsilon}>0\), and \(\phi_{2,\varepsilon}(x_1)>0\).
Thus \eqref{eq:localized-modes} gives, on this interval,
\[
\zeta_{1,\varepsilon}(x)=\tfrac12(g_\varepsilon^+(x)+g_\varepsilon^-(x)),\qquad
\zeta_{2,\varepsilon}(x)=\tfrac12(g_\varepsilon^+(x)-g_\varepsilon^-(x)).
\]
Reflection also gives
\begin{equation}
\zeta_{1,\varepsilon}(1-x)=\zeta_{2,\varepsilon}(x),\qquad x\in(0,1).
\label{eq:zeta-reflection}
\end{equation}
The positivity of \(g_\varepsilon^\pm\) gives
\(\zeta_{1,\varepsilon}>0\) on \((0,1/2)\); reflection gives
\(\zeta_{2,\varepsilon}>0\) on \((1/2,1)\).
By \eqref{eq:half-eigenvalue-upper-bound},
\(\alpha_{1,\varepsilon}^{\pm}\le2\nu_2\varepsilon\).
Lemma~\ref{lem:half-Agmon}, applied to \(g_\varepsilon^\pm\) on
\(K:=[0,x_1-\ell]\cup[x_1+\ell,1/2]\), gives constants
\(C,\kappa>0\), independent of \(\varepsilon\), such that
\(\int_K|g_\varepsilon^\pm|^2\,\dd x\le C^2e^{-2\kappa/\varepsilon}\).
Normalization therefore gives, for small \(\varepsilon\),
\begin{equation}
\int_{x_1-\ell}^{x_1+\ell}|g_\varepsilon^\pm|^2\,\dd x
=1-\int_K|g_\varepsilon^\pm|^2\,\dd x
\ge1-C^2e^{-2\kappa/\varepsilon}\ge\frac12.
\label{eq:direct-half-first-eigenfunction-lower-bound}
\end{equation}
Consequently, positivity, \eqref{eq:zeta-reflection}, and
\eqref{eq:direct-half-first-eigenfunction-lower-bound} imply that, for all
sufficiently small \(\varepsilon\),
\begin{equation}
\int_{x_j-\ell}^{x_j+\ell}|\zeta_{j,\varepsilon}|^2\,\dd x
\ge\frac18,
\qquad j=1,2.
\label{eq:own-well-mass}
\end{equation}
Choose \(\chi\in C_c^\infty((x_1-2\ell,x_1+2\ell))\) so that
\(0\le\chi\le1\) and \(\chi=1\) on \((x_1-\ell,x_1+\ell)\). Set
\(\chi^\sharp(x):=\chi(1-x)\).  Reflection symmetry and
\eqref{eq:own-well-mass} give
\[
d_\varepsilon
:=\langle\chi\zeta_{1,\varepsilon},\zeta_{1,\varepsilon}\rangle
 =\langle\chi^\sharp\zeta_{2,\varepsilon},
           \zeta_{2,\varepsilon}\rangle
 \ge\frac18.
\]
Define
\[
\rho_{1,\varepsilon}
:=\frac{\chi\zeta_{1,\varepsilon}}{d_\varepsilon}
\quad\text{and}\quad
\rho_{2,\varepsilon}
:=\frac{\chi^\sharp\zeta_{2,\varepsilon}}{d_\varepsilon}.
\]
Both functions are nonnegative. Moreover,
\begin{subequations}
\label{eq:rho-moments}
\begin{equation}
\langle\rho_{1,\varepsilon},\zeta_{1,\varepsilon}\rangle
=\langle\rho_{2,\varepsilon},\zeta_{2,\varepsilon}\rangle=1.
\label{eq:rho-diagonal-moments}
\end{equation}
Their two cross moments agree:
\begin{equation}
q_\varepsilon
:=\langle\rho_{1,\varepsilon},\zeta_{2,\varepsilon}\rangle
=\langle\rho_{2,\varepsilon},\zeta_{1,\varepsilon}\rangle.
\label{eq:rho-cross-moments}
\end{equation}
\end{subequations}
Since \(\int_0^{1/2}(g_\varepsilon^+)^2\,\dd x
=\int_0^{1/2}(g_\varepsilon^-)^2\,\dd x=1\),
\eqref{eq:rho-cross-moments} gives
\[
\begin{aligned}
q_\varepsilon
&=\frac1{4d_\varepsilon}\int_0^{1/2}
\chi\bigl[(g_\varepsilon^+)^2-(g_\varepsilon^-)^2\bigr]\,\dd x\\
&=-\frac1{4d_\varepsilon}\int_0^{1/2}
(1-\chi)\bigl[(g_\varepsilon^+)^2-(g_\varepsilon^-)^2\bigr]\,\dd x.
\end{aligned}
\]
Using \(\operatorname{supp}_{[0,1/2]}(1-\chi)\subset K\),
\(d_\varepsilon\ge1/8\), and Lemma~\ref{lem:half-Agmon}, we obtain
\[
|q_\varepsilon|
\le\frac1{4d_\varepsilon}\int_K
\bigl[(g_\varepsilon^+)^2+(g_\varepsilon^-)^2\bigr]\,\dd x
\le4C^2e^{-2\kappa/\varepsilon}\longrightarrow0.
\]

For small \(\varepsilon\), the matrix of the inner products in
\eqref{eq:rho-moments} and
its inverse are
\[
G_\varepsilon
:=\begin{pmatrix}1&q_\varepsilon\\q_\varepsilon&1\end{pmatrix},
\qquad
G_\varepsilon^{-1}
=\frac1{1-q_\varepsilon^2}
\begin{pmatrix}1&-q_\varepsilon\\-q_\varepsilon&1\end{pmatrix}
\longrightarrow\mathrm{Id}.
\]
Choose \(0<\delta_0<\frac12\min\{1,\eta_{\mathrm L}L\}\).
Define the two target vectors by
\[
\mathbf y_{b,\varepsilon}
:=\binom{L}{1}
-\delta_0\binom{\langle\1,\zeta_{1,\varepsilon}\rangle}
 {\langle\1,\zeta_{2,\varepsilon}\rangle}
\quad\text{and}\quad
\mathbf y_{c,\varepsilon}(\eta)
:=\binom{1}{\eta L}
-\delta_0\binom{\langle\1,\zeta_{1,\varepsilon}\rangle}
 {\langle\1,\zeta_{2,\varepsilon}\rangle}.
\]
Because
\(|\langle\1,\zeta_{j,\varepsilon}\rangle|\le1\), the target vectors are
uniformly bounded, with every component at least
\(\tfrac12\min\{1,\eta_{\mathrm L}L\}>0\), for \(\eta\in I\).
Since \(G_\varepsilon^{-1}\to\mathrm{Id}\), the vectors
\[
\boldsymbol\beta_{b,\varepsilon}
:=G_\varepsilon^{-1}\mathbf y_{b,\varepsilon},
\qquad\text{and}\qquad
\boldsymbol\beta_{c,\varepsilon}(\eta)
:=G_\varepsilon^{-1}\mathbf y_{c,\varepsilon}(\eta)
\]
have positive components for every \(\eta\in I\) when \(\varepsilon\)
is small. Write their components as
\(\beta_{b,\varepsilon,j}\) and
\(\beta_{c,\varepsilon,j}(\eta)\), \(j=1,2\). Set
\begin{align}
k_{b,\varepsilon}
&:=\delta_0\1+\beta_{b,\varepsilon,1}\rho_{1,\varepsilon}
+\beta_{b,\varepsilon,2}\rho_{2,\varepsilon},
\label{eq:kb-direct-definition}\\
k_{c,\varepsilon,\eta}
&:=\delta_0\1+\beta_{c,\varepsilon,1}(\eta)\rho_{1,\varepsilon}
+\beta_{c,\varepsilon,2}(\eta)\rho_{2,\varepsilon}.
\label{eq:kc-direct-definition}
\end{align}
Both functions are at least \(\delta_0\).
By \eqref{eq:rho-moments}, the identities
\(G_\varepsilon\boldsymbol\beta_{b,\varepsilon}=\mathbf y_{b,\varepsilon}\)
and
\(G_\varepsilon\boldsymbol\beta_{c,\varepsilon}(\eta)
=\mathbf y_{c,\varepsilon}(\eta)\)
give \eqref{eq:moments}.
The second target vector is affine in \(\eta\);
hence so is \(k_{c,\varepsilon,\eta}\).
Finally, by \eqref{eq:schrodinger},
\eqref{eq:half-eigenvalue-upper-bound}, and the first two identities
in \eqref{eq:full-half-spectrum-ordering}, the normalized eigenfunctions
satisfy, for \(j=1,2\),
\[
\|\phi_{j,\varepsilon}''\|_{L^2((0,1))}
\le\varepsilon^{-2}(\|W\|_{L^\infty((0,1))}+\lambda_{j,\varepsilon})
\le(\|W\|_{L^\infty((0,1))}+2\nu_2)\varepsilon^{-2},
\qquad 0<\varepsilon\le1.
\]
The standard one-dimensional \(H^2\) estimate, the embedding
\(H^2((0,1))\hookrightarrow C^1([0,1])\), the fixed cutoff functions
\(\chi,\chi^\sharp\), and \(d_\varepsilon\ge1/8\) give
\[
\|\rho_{1,\varepsilon}\|_{C^1([0,1])}
+\|\rho_{2,\varepsilon}\|_{C^1([0,1])}
\le C\varepsilon^{-2},
\]
where \(C>0\) is independent of \((\varepsilon,\eta)\).
The coefficient vectors are uniformly bounded. Since
\(G_\varepsilon\) is independent of \(\eta\),
\[
\partial_\eta\boldsymbol\beta_{c,\varepsilon}
=G_\varepsilon^{-1}\binom{0}{L}.
\]
This derivative is uniformly bounded as well. Since
\(\rho_{j,\varepsilon}\) is independent of \(\eta\),
\eqref{eq:kb-direct-definition}--\eqref{eq:kc-direct-definition}
and the uniform coefficient bounds give, after increasing \(C\) if
necessary, for \(0<\varepsilon\le1\),
\[
K_\varepsilon
\le1+2\delta_0+
C\bigl(\|\rho_{1,\varepsilon}\|_{C^1([0,1])}
+\|\rho_{2,\varepsilon}\|_{C^1([0,1])}\bigr)
\le C_{\mathrm{pr}}\varepsilon^{-2}.
\]
This proves \eqref{eq:profile-bound}.
\end{proof}

\section{Proof for Spatially Concentrated Prey}
\label{sec:concentrated-prey}

In this section, we construct equilibria with spatially concentrated prey and transfer the
matrix Hopf crossing to the full reaction--diffusion system, thereby proving
Theorem~\ref{thm:main}.

\subsection{Construction of the coefficient families}
\label{subsec:prey-coefficients}

Before analyzing stability, we first recover reaction coefficients that realize the
prescribed equilibrium and linearization while satisfying the required sign conditions.

Lemma~\ref{lem:positive-profiles} provides positive functions satisfying
\eqref{eq:moments} and \eqref{eq:profile-bound}.
Recall that \(\phi_{1,\varepsilon}\) is the positive
\(L^2((0,1))\)-normalized first eigenfunction of \(\cH_\varepsilon\)
in \eqref{eq:schrodinger}. For the linearization, define
\begin{equation}
A_\varepsilon
:=-\bigl(\cH_\varepsilon-\lambda_{1,\varepsilon}\mathrm{Id}\bigr).
\label{eq:Aepsilon}
\end{equation}
We seek reaction coefficients that realize the following linearization.

\begin{equation}
\cL_{\varepsilon,D,\eta}
=
\begin{pmatrix}
A_\varepsilon-\dfrac{\Lambda_{1,\varepsilon}}{2}\mathrm{Id}
&-\Lambda_{1,\varepsilon} k_{b,\varepsilon}\\[1mm]
\Lambda_{1,\varepsilon} k_{c,\varepsilon,\eta}
&D\partial_{xx}-\gamma_\varepsilon \Lambda_{1,\varepsilon}\mathrm{Id}
\end{pmatrix}.
\label{eq:target-linearization}
\end{equation}
Here \(0<\gamma_\varepsilon<\gamma_0\) is to be chosen, with
\(\gamma_0\) fixed by \eqref{eq:Lgamma-range}. The scalar functions \(k_{b,\varepsilon}\) and
\(k_{c,\varepsilon,\eta}\) act by pointwise multiplication.

\begin{lemma}[Coefficient families with spatially concentrated prey]
\label{lem:prey-coefficients}
For all sufficiently small \(\varepsilon\), there are constants
\(V_\varepsilon^\tau>0\), \(0<\gamma_\varepsilon^\tau<\gamma_0\), and
smooth coefficient families independent of \(D\), indexed by \(\tau=+,-\),
such that, with \(d=\varepsilon^2\),
\[
r_\varepsilon^\tau,a_\varepsilon,b_\varepsilon,
c_{\varepsilon,\eta}^\tau,e_\varepsilon^\tau>0
\qquad
m_{\varepsilon,\eta}^{+}>0>m_{\varepsilon,\eta}^{-}
\quad\text{on }[0,1]\times I.
\]
For every \(D>0\), \(\eta\in I\), and \(\tau\in\{+,-\}\), the unique
positive equilibrium is
\begin{equation}
(u_{\varepsilon,\eta}^{*,\tau},v_{\varepsilon,\eta}^{*,\tau})
=(\phi_{1,\varepsilon},V_\varepsilon^\tau).
\label{eq:exact-equilibrium}
\end{equation}
Its linearization is \eqref{eq:target-linearization}, with
\(\gamma_\varepsilon=\gamma_\varepsilon^\tau\).
\end{lemma}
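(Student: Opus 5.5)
The plan is to reverse-engineer the coefficients. We impose that \((\phi_{1,\varepsilon},V)\), with \(V>0\) constant, solves the stationary problem with \(d=\varepsilon^2\), and that the linearization \eqref{eq:linearized-operator} at this pair coincides entrywise with \eqref{eq:target-linearization}. Uniqueness of the positive equilibrium then comes for free from \cite[Theorem~4.1]{LopezGomezPardo1998}, since all the coefficients we produce satisfy \eqref{eq:positive-coefficients}. The only genuine work is checking signs; that is where the free constants \(V_\varepsilon^\tau\) and \(\gamma_\varepsilon^\tau\) are used.

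\textbf{Prey equation.} Since \(\cH_\varepsilon\phi_{1,\varepsilon}=\lambda_{1,\varepsilon}\phi_{1,\varepsilon}\), we have \(\varepsilon^2\phi_{1,\varepsilon}''=(W-\lambda_{1,\varepsilon})\phi_{1,\varepsilon}\). Dividing by \(\phi_{1,\varepsilon}>0\), the prey equilibrium equation becomes
\[
r-a\phi_{1,\varepsilon}-bV=\lambda_{1,\varepsilon}-W .
\]
Matching the \((1,1)\) entry with \(A_\varepsilon-\tfrac{\Lambda_{1,\varepsilon}}2\mathrm{Id}\) requires
\[
r-2a\phi_{1,\varepsilon}-bV=\lambda_{1,\varepsilon}-W-\tfrac{\Lambda_{1,\varepsilon}}2 .
\]
Subtracting the two conditions forces \(a_\varepsilon=\Lambda_{1,\varepsilon}/(2\phi_{1,\varepsilon})\). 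This is positive by Lemma~\ref{lem:symmetric-double-well}, and smooth because \(W\) is smooth and \(\phi_{1,\varepsilon}>0\) on \([0,1]\). The \((1,2)\) entry forces \(b_\varepsilon=\Lambda_{1,\varepsilon}k_{b,\varepsilon}/\phi_{1,\varepsilon}>0\). We then set
\[
r_\varepsilon^\tau:=\lambda_{1,\varepsilon}-W+\tfrac{\Lambda_{1,\varepsilon}}2+V_\varepsilon^\tau b_\varepsilon .
\]

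\textbf{Predator equation.} Because \(v\equiv V\) is constant, \(D\partial_{xx}V=0\), so the equilibrium holds for every \(D\) exactly when \(m+c\phi_{1,\varepsilon}-eV=0\). The \((2,1)\) entry gives \(c_{\varepsilon,\eta}^\tau=\Lambda_{1,\varepsilon}k_{c,\varepsilon,\eta}/V_\varepsilon^\tau\). The \((2,2)\) entry is \(D\partial_{xx}+(m+c\phi_{1,\varepsilon}-eV)-eV=D\partial_{xx}-eV\), so
\[
e_\varepsilon^\tau=\gamma_\varepsilon^\tau\Lambda_{1,\varepsilon}/V_\varepsilon^\tau,
\qquad
m_{\varepsilon,\eta}^\tau=\Lambda_{1,\varepsilon}\bigl(\gamma_\varepsilon^\tau-k_{c,\varepsilon,\eta}\phi_{1,\varepsilon}/V_\varepsilon^\tau\bigr).
\]
None of these formulas involve \(D\). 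Smoothness in \((x,\eta)\) follows from Lemma~\ref{lem:positive-profiles}, where \(k_{c,\varepsilon,\eta}\) is affine in \(\eta\). The positivity of \(c\) and \(e\) is immediate.

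\textbf{Signs.} This step is the main (if mild) obstacle, because \(\lambda_{1,\varepsilon}-W\) is negative away from the wells. For \(r\), we use \(k_{b,\varepsilon}\ge\delta_0\) to get
\[
V b_\varepsilon\ge V\Lambda_{1,\varepsilon}\delta_0/\max\phi_{1,\varepsilon} .
\]
So \(r_\varepsilon^\tau>0\) as soon as \(V>\|W\|_\infty\max\phi_{1,\varepsilon}/(\delta_0\Lambda_{1,\varepsilon})\). The threshold is huge because \(\Lambda_{1,\varepsilon}\) is exponentially small, but it is finite for each fixed \(\varepsilon\), and that is all we need.

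For \(\tau=+\), fix any \(\gamma_\varepsilon^+\in(0,\gamma_0)\), say \(\gamma_0/2\). Then enlarge \(V_\varepsilon^+\) so that, in addition, \(V_\varepsilon^+>\sup_{I}\max k_{c,\varepsilon,\eta}\phi_{1,\varepsilon}/\gamma_\varepsilon^+\). This gives \(m^+>0\).

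For \(\tau=-\), first fix \(V_\varepsilon^-\) large enough for \(r>0\). Then choose
\[
0<\gamma_\varepsilon^-<\min\bigl\{\gamma_0,\ \delta_0\min_{[0,1]}\phi_{1,\varepsilon}/V_\varepsilon^-\bigr\}.
\]
Since \(k_{c,\varepsilon,\eta}\ge\delta_0\), this gives \(m^-<0\) on \([0,1]\times I\).

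With all sign conditions verified, \eqref{eq:exact-equilibrium} is an equilibrium, it is the unique positive one, and its linearization is \eqref{eq:target-linearization} by construction.
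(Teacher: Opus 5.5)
Your proposal is correct and follows the paper's proof essentially step by step. It uses the same reverse-engineered coefficients \(a_\varepsilon=\Lambda_{1,\varepsilon}/(2\phi_{1,\varepsilon})\), \(b_\varepsilon\), \(c_{\varepsilon,\eta}\), \(e_\varepsilon\), \(m_{\varepsilon,\eta}\), \(r_\varepsilon\), obtains uniqueness from L\'opez-G\'omez--Pardo, and controls signs by taking \(V_\varepsilon^\tau\) large and then tuning \(\gamma_\varepsilon^\tau\); the only differences are cosmetic (you bound \(r_\varepsilon\) via \(k_{b,\varepsilon}\ge\delta_0\) and \(\|W\|_\infty\) instead of the threshold \(\Theta_\varepsilon^r\), and for \(\tau=+\) you fix \(\gamma_\varepsilon^+\) before enlarging \(V_\varepsilon^+\)), and none of them affects validity.
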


\begin{proof}

Let \(V_\varepsilon>0\) and
\(0<\gamma_\varepsilon<\gamma_0\). We prescribe
\begin{equation}
(u_*,v_*)=(\phi_{1,\varepsilon},V_\varepsilon)
\label{eq:desired-equilibrium}
\end{equation}
as the exact equilibrium. The equilibrium identities are
\begin{equation}
r_\varepsilon-a_\varepsilon\phi_{1,\varepsilon}-b_\varepsilon V_\varepsilon
=\lambda_{1,\varepsilon}-W
\quad\text{and}\quad
m_{\varepsilon,\eta}+c_{\varepsilon,\eta}\phi_{1,\varepsilon}
-e_\varepsilon V_\varepsilon=0.
\label{eq:balance-identities}
\end{equation}
Substituting \eqref{eq:desired-equilibrium} into the general linearization
\eqref{eq:linearized-operator} and using the two identities in
\eqref{eq:balance-identities} yields
\begin{equation}
\begin{pmatrix}
A_\varepsilon-a_\varepsilon\phi_{1,\varepsilon}
&-b_\varepsilon\phi_{1,\varepsilon}\\
c_{\varepsilon,\eta}V_\varepsilon
&D\partial_{xx}-e_\varepsilon V_\varepsilon
\end{pmatrix}.
\label{eq:general-linearization}
\end{equation}

Comparing \eqref{eq:general-linearization} with
\eqref{eq:target-linearization} yields
\begin{subequations}
\label{eq:block-requirements}
\begin{align}
&a_\varepsilon\phi_{1,\varepsilon}=\Lambda_{1,\varepsilon}/2,
\qquad b_\varepsilon\phi_{1,\varepsilon}=\Lambda_{1,\varepsilon} k_{b,\varepsilon},
\label{eq:block-requirements-prey}\\
&c_{\varepsilon,\eta}V_\varepsilon=\Lambda_{1,\varepsilon} k_{c,\varepsilon,\eta},
\qquad
e_\varepsilon V_\varepsilon=\gamma_\varepsilon \Lambda_{1,\varepsilon}.
\label{eq:block-requirements-predator}
\end{align}
\end{subequations}
We then solve \eqref{eq:balance-identities} for
\(r_\varepsilon,m_{\varepsilon,\eta}\). The six coefficients are
\begin{subequations}
\label{eq:coefficients}
\begin{align}
a_\varepsilon(x)&:=\frac{\Lambda_{1,\varepsilon}}{2\phi_{1,\varepsilon}(x)},
\qquad
b_\varepsilon(x):=\frac{\Lambda_{1,\varepsilon} k_{b,\varepsilon}(x)}
{\phi_{1,\varepsilon}(x)},
\label{eq:coefficients-ab}\\
c_{\varepsilon,\eta}(x)&:=\frac{\Lambda_{1,\varepsilon}}{V_\varepsilon}
k_{c,\varepsilon,\eta}(x),
\qquad
e_\varepsilon(x):=\frac{\gamma_\varepsilon \Lambda_{1,\varepsilon}}
{V_\varepsilon},
\label{eq:coefficients-ce}\\
m_{\varepsilon,\eta}(x)
&:=\gamma_\varepsilon \Lambda_{1,\varepsilon}
-\frac{\Lambda_{1,\varepsilon}}{V_\varepsilon}
k_{c,\varepsilon,\eta}(x)\phi_{1,\varepsilon}(x),
\label{eq:coefficients-m}\\
r_\varepsilon(x)
&:=\lambda_{1,\varepsilon}-W(x)
+\frac{\Lambda_{1,\varepsilon}}{2}
+\frac{V_\varepsilon \Lambda_{1,\varepsilon} k_{b,\varepsilon}(x)}
{\phi_{1,\varepsilon}(x)}.
\label{eq:coefficients-r}
\end{align}
\end{subequations}
For \(t\in\R\), write \(t_+:=\max\{t,0\}\).
To control the sign of \(r_\varepsilon\), define
\begin{equation}
\Theta_\varepsilon^r
:=\left\|
\frac{(W(x)-\lambda_{1,\varepsilon}-\Lambda_{1,\varepsilon}/2)_+
\phi_{1,\varepsilon}(x)}
{\Lambda_{1,\varepsilon} k_{b,\varepsilon}(x)}
\right\|_{L^\infty((0,1))}.
\label{eq:prey-growth-threshold}
\end{equation}
Also set
\[
\underline\kappa_\varepsilon
:=\min_{(x,\eta)\in[0,1]\times I}
k_{c,\varepsilon,\eta}(x)\phi_{1,\varepsilon}(x),
\quad\text{and}\quad
\overline\kappa_\varepsilon
:=\max_{(x,\eta)\in[0,1]\times I}
k_{c,\varepsilon,\eta}(x)\phi_{1,\varepsilon}(x).
\]
By compactness and strict positivity, we have
\(0<\underline\kappa_\varepsilon\le\overline\kappa_\varepsilon<\infty\).

We first construct the family with negative predator growth. Choose
\begin{equation}
V_\varepsilon^->1+\Theta_\varepsilon^r
\quad\text{and}\quad
0<\gamma_\varepsilon^-<
\min\left\{\gamma_0,
\frac{\underline\kappa_\varepsilon}{2V_\varepsilon^-}\right\}.
\label{eq:negative-choice}
\end{equation}
Then, for every \((x,\eta)\in[0,1]\times I\),
\begin{equation}
m_{\varepsilon,\eta}^-(x)
\le-\frac{\Lambda_{1,\varepsilon}\underline\kappa_\varepsilon}
{2V_\varepsilon^-}<0.
\label{eq:m-negative}
\end{equation}

For the family with positive predator growth, choose
\begin{equation}
V_\varepsilon^+>
\max\left\{1+\Theta_\varepsilon^r,
\frac{\overline\kappa_\varepsilon}{\gamma_0}\right\}
\quad\text{and}\quad
\frac{\overline\kappa_\varepsilon}{V_\varepsilon^+}
<\gamma_\varepsilon^+<\gamma_0.
\label{eq:positive-choice}
\end{equation}
The bound
\(V_\varepsilon^+>\overline\kappa_\varepsilon/\gamma_0\) makes the interval
for \(\gamma_\varepsilon^+\) in \eqref{eq:positive-choice} nonempty. The
lower bound on \(\gamma_\varepsilon^+\), together with
\eqref{eq:coefficients-m}, gives
\begin{equation}
m_{\varepsilon,\eta}^+(x)
\ge \Lambda_{1,\varepsilon}\left(
\gamma_\varepsilon^+-
\frac{\overline\kappa_\varepsilon}{V_\varepsilon^+}
\right)>0.
\label{eq:m-positive}
\end{equation}
For the choices \eqref{eq:negative-choice} and \eqref{eq:positive-choice},
\eqref{eq:prey-growth-threshold} gives
\[
 r_\varepsilon^\tau(x)
 \ge\bigl(V_\varepsilon^\tau-\Theta_\varepsilon^r\bigr)
 \frac{\Lambda_{1,\varepsilon} k_{b,\varepsilon}(x)}
      {\phi_{1,\varepsilon}(x)}
 >0,\qquad x\in[0,1].
\]
Thus, both sets of coefficients define valid predator--prey
systems with strictly positive interaction coefficients.

For \(\tau\in\{+,-\}\), substitute
\((V_\varepsilon^\tau,\gamma_\varepsilon^\tau)\) into
\eqref{eq:coefficients}. The balance identities
\eqref{eq:balance-identities} give \eqref{eq:exact-equilibrium}. By
\cite[Theorem~4.1]{LopezGomezPardo1998}, the equilibrium in
\eqref{eq:exact-equilibrium} is the unique positive equilibrium for every
\(D>0\) with \(d=\varepsilon^2\). Its explicit formula is independent
of \(\eta\). Substitution into \eqref{eq:general-linearization} gives
\eqref{eq:target-linearization}.
\end{proof}

\subsection{Spectral reduction of the full linearization}
\label{subsec:prey-reduction}

In this subsection, we reduce the eigenvalue problem in a half-plane containing the imaginary axis
to an exact matrix equation with controlled correction terms.

On \(L^2((0,1))^2\), the full linearization is a bounded perturbation
of the diagonal Neumann diffusion operator and has compact resolvent.
To identify the eigenvalue crossing, fix either family and suppress the label
\(\tau\) throughout the remainder of this section. All subsequent estimates
are uniform for
\(0\le\gamma_\varepsilon\le\gamma_0\). Let \(P_\varepsilon\) be the
orthogonal projection onto
\(\operatorname{span}_{\C}\{\zeta_{1,\varepsilon},\zeta_{2,\varepsilon}\}\), and let \(P\)
be the orthogonal projection onto the constant functions. Set
\begin{equation}
\cP_\varepsilon=\operatorname{diag}(P_\varepsilon,P)
\quad\text{and}\quad
\cQ_\varepsilon=\mathrm{Id}-\cP_\varepsilon.
\label{eq:PQ}
\end{equation}
Thus \(\cP_\varepsilon\) keeps the three selected functions, whereas
\(\cQ_\varepsilon\) keeps precisely the remaining functions:
\begin{equation}
\operatorname{Ran}\cQ_\varepsilon
=\left\{(\xi,\psi)\in L^2((0,1))^2:
\xi\perp\operatorname{span}_{\C}
\{\zeta_{1,\varepsilon},\zeta_{2,\varepsilon}\},\quad
\langle\psi,\1\rangle=0\right\}.
\label{eq:Q-range}
\end{equation}
Define the ordered orthonormal basis by
\begin{equation}
\mathcal B_\varepsilon
:=\bigl((\zeta_{1,\varepsilon},0),
(\zeta_{2,\varepsilon},0),(0,\1)\bigr).
\label{eq:slow-basis}
\end{equation}

Hereafter, \(\|\cdot\|\) denotes the operator norm induced by the indicated
\(L^2\) norms on the relevant subspaces, or by the Euclidean norm for
matrices on \(\C^3\).
To enclose the eigenvalues of the target matrices in a common disk, choose
\begin{equation}
R:=1+2\sup_{(\gamma,\eta)\in[0,\gamma_0]\times I}
\|M_\gamma(\eta)\|.
\label{eq:spectral-window-radius}
\end{equation}
Then every eigenvalue \(z\) of every matrix \(M_\gamma(\eta)\) satisfies
\(|z|\le\|M_\gamma(\eta)\|\le(R-1)/2<R/2\).
With \(K_\varepsilon\) from \eqref{eq:profile-bound}, set
\begin{equation}
\delta_{\varepsilon,D}
:=\Lambda_{1,\varepsilon} K_\varepsilon^2
\left(\frac1{\Lambda_{2,\varepsilon}}+\frac1D\right).
\label{eq:delta}
\end{equation}
For \(\lambda=\Lambda_{1,\varepsilon}z\), define on the space in \eqref{eq:Q-range}
\begin{equation}
B(z,\eta):=
\cQ_\varepsilon(\Lambda_{1,\varepsilon} z-\cL_{\varepsilon,D,\eta})
\cQ_\varepsilon\quad\text{with}\quad
\operatorname{Dom}(B)=\operatorname{Dom}(\cL_{\varepsilon,D,\eta})
\cap\operatorname{Ran}\cQ_\varepsilon.
\label{eq:B-definition}
\end{equation}

\begin{lemma}[Invertibility on the complementary space]
\label{lem:complement-inverse}
There is \(\varepsilon_{\mathrm{red}}>0\) such that, if
\(0<\varepsilon<\varepsilon_{\mathrm{red}}\), \(D>0\), and
\(\delta_{\varepsilon,D}\le\min\{1/4,\pi^2/(2R)\}\), then, uniformly for
\((z,\eta)\in\C\times I\) with \(\operatorname{Re} z\ge-R\),
\begin{equation}
\|B(z,\eta)^{-1}\|
\le4\left(\frac1{\Lambda_{2,\varepsilon}}+\frac1D\right).
\label{eq:high-resolvent}
\end{equation}
\end{lemma}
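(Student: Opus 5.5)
We split \(\cQ_\varepsilon(\Lambda_{1,\varepsilon}z-\cL_{\varepsilon,D,\eta})\cQ_\varepsilon\) into a block-diagonal principal part and a small off-diagonal coupling. We invert the principal part explicitly and then close the argument with a Neumann series.

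\emph{Principal part.} Write \(\cQ_\varepsilon=\operatorname{diag}(Q_1,Q_2)\), where \(Q_1=\mathrm{Id}-P_\varepsilon\) and \(Q_2=\mathrm{Id}-P\). Both \(A_\varepsilon\) and \(D\partial_{xx}\) commute with the respective projections: \(\operatorname{span}\{\zeta_{1,\varepsilon},\zeta_{2,\varepsilon}\}=\operatorname{span}\{\phi_{1,\varepsilon},\phi_{2,\varepsilon}\}\) is an eigenspace sum of \(\cH_\varepsilon\), and the constants are the kernel of the Neumann Laplacian. Hence the diagonal part of \(B\) is
\[
\operatorname{diag}\Bigl(Q_1\bigl(\Lambda_{1,\varepsilon}(z+\tfrac12)+\cH_\varepsilon-\lambda_{1,\varepsilon}\bigr)Q_1,\;
Q_2\bigl(\Lambda_{1,\varepsilon}(z+\gamma_\varepsilon)-D\partial_{xx}\bigr)Q_2\Bigr).
\]
Both blocks are normal, so their inverse norms equal reciprocal spectral distances.

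On \(\operatorname{Ran}Q_1\), the spectrum of \(\cH_\varepsilon-\lambda_{1,\varepsilon}\) lies in \([\Lambda_{2,\varepsilon},\infty)\). With \(\operatorname{Re}z\ge-R\), the real part of the symbol is at least
\[
\Lambda_{2,\varepsilon}-\Lambda_{1,\varepsilon}(R-\tfrac12)\ge\Lambda_{2,\varepsilon}/2.
\]
This holds for small \(\varepsilon\), since \(\Lambda_{1,\varepsilon}\) is exponentially small while \(\Lambda_{2,\varepsilon}\gtrsim\varepsilon\) by Lemma~\ref{lem:symmetric-double-well}. (Alternatively, it follows from \(\delta_{\varepsilon,D}\le1/4\) together with \(K_\varepsilon\ge1\), which give \(\Lambda_{1,\varepsilon}\le\Lambda_{2,\varepsilon}/4\). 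Combined with \(R\Lambda_{1,\varepsilon}\le\ldots\), this needs care, so I would use the smallness \(\Lambda_{1,\varepsilon}/\Lambda_{2,\varepsilon}\to0\) and shrink \(\varepsilon_{\mathrm{red}}\).) So the first block has inverse norm \(\le2/\Lambda_{2,\varepsilon}\).

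On \(\operatorname{Ran}Q_2\), the spectrum of \(-D\partial_{xx}\) lies in \([D\pi^2,\infty)\). We need \(D\pi^2-\Lambda_{1,\varepsilon}R\ge D\pi^2/2\), i.e. \(\Lambda_{1,\varepsilon}R\le D\pi^2/2\). This is exactly where the hypothesis \(\delta_{\varepsilon,D}\le\pi^2/(2R)\) enters: since \(K_\varepsilon\ge1\), it gives \(\Lambda_{1,\varepsilon}/D\le\pi^2/(2R)\). So the second block has inverse norm \(\le2/(D\pi^2)\le2/D\). Together, the diagonal inverse \(B_0^{-1}\) satisfies \(\|B_0^{-1}\|\le2(1/\Lambda_{2,\varepsilon}+1/D)\).

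\emph{Coupling.} The off-diagonal part is \(\Lambda_{1,\varepsilon}\) times the compressions \(Q_1k_{b,\varepsilon}Q_2\) and \(-Q_2k_{c,\varepsilon,\eta}Q_1\), with multiplier norms bounded by \(K_\varepsilon\) via \eqref{eq:profile-bound}. Call this part \(N\), so \(\|N\|\le\Lambda_{1,\varepsilon}K_\varepsilon\). A crude bound \(\|B_0^{-1}N\|\le 2\Lambda_{1,\varepsilon}K_\varepsilon(1/\Lambda_{2,\varepsilon}+1/D)\le2\delta_{\varepsilon,D}\le1/2\) already suffices, using \(K_\varepsilon\le K_\varepsilon^2\). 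More sharply, one can exploit the block structure: \(N\) maps the first component to the second and vice versa, so \((B_0^{-1}N)^2\) is block-diagonal with norm \(\le\Lambda_{1,\varepsilon}^2K_\varepsilon^2\cdot 4/(\Lambda_{2,\varepsilon}D)\), which is \(\le\delta^2\)-type small. Either way, \(\|B_0^{-1}N\|\le1/2\), and the Neumann series gives \(B=B_0(\mathrm{Id}+B_0^{-1}N)\) invertible with \(\|B^{-1}\|\le2\|B_0^{-1}\|\le4(1/\Lambda_{2,\varepsilon}+1/D)\), uniformly in \(\eta\in I\) and \(\operatorname{Re}z\ge-R\).

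\emph{Domains and the main obstacle.} We check that \(B_0\) is boundedly invertible from \(\operatorname{Dom}\cap\operatorname{Ran}\cQ_\varepsilon\) onto \(\operatorname{Ran}\cQ_\varepsilon\); this follows from the self-adjoint spectral theorem applied to the reduced operators. The main obstacle is making the first-block lower bound rigorous uniformly: it requires \(\Lambda_{1,\varepsilon}R\le\Lambda_{2,\varepsilon}/2\). This holds by Lemma~\ref{lem:symmetric-double-well} for \(\varepsilon<\varepsilon_{\mathrm{red}}\), and that is precisely why \(\varepsilon_{\mathrm{red}}\) appears in the statement.
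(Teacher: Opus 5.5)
Your proof is correct, but it reaches the conclusion by a different route than the paper.

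\textbf{The paper's route.} The paper never inverts the diagonal part. It proves the accretivity estimate
\[
\operatorname{Re}\langle Bw,w\rangle\ge\tfrac14\left(\tfrac1{\Lambda_{2,\varepsilon}}+\tfrac1D\right)^{-1}\|w\|^2
\]
using only quadratic-form lower bounds: the variational characterization of $\lambda_{3,\varepsilon}$ and the Neumann Poincar\'e inequality. The off-diagonal terms are absorbed by Cauchy--Schwarz. Invertibility then follows from three facts: injectivity, closed range, and the observation that $B^*$ has the same real part of its quadratic form, so $\ker B^*=\{0\}$ and the range is dense.

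\textbf{Your route.} You use that each diagonal block is a self-adjoint operator shifted by a complex scalar. Each block is therefore normal, and $\|B_0^{-1}\|$ is computed exactly from spectral distances. You then close with a Neumann series, using $\|B_0^{-1}N\|\le2\delta_{\varepsilon,D}\le1/2$.

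\textbf{Common ground.} Both arguments rely on exactly the same two smallness inputs:
\begin{itemize}
\item[(i)] $R\Lambda_{1,\varepsilon}\le\Lambda_{2,\varepsilon}/2$, from Lemma~\ref{lem:symmetric-double-well} for small $\varepsilon$;
\item[(ii)] $R\Lambda_{1,\varepsilon}\le D\pi^2/2$, from $\delta_{\varepsilon,D}\le\pi^2/(2R)$ and $K_\varepsilon\ge1$.
\end{itemize}
Both also arrive at the same constant $4$.

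\textbf{What each route buys.} Your factorization $B=B_0(\mathrm{Id}+B_0^{-1}N)$ gives surjectivity and the correct domain mapping for free, with no adjoint or closed-range argument. It also yields an explicit series for $B^{-1}$. The paper's route needs only lower bounds on quadratic forms. It would therefore survive if the diagonal blocks were merely accretive rather than normal.

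\textbf{A small cleanup.} You can drop your parenthetical about obtaining (i) from $\delta_{\varepsilon,D}\le1/4$. As you suspected, that gives only $\Lambda_{1,\varepsilon}\le\Lambda_{2,\varepsilon}/4$, without the factor $R$. The small-$\varepsilon$ argument you settle on is exactly what the paper uses.
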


\begin{proof}
For \((\xi,\psi)\in\operatorname{Dom}(B)\),
\eqref{eq:Q-range} gives \(\xi\perp\phi_{1,\varepsilon},\phi_{2,\varepsilon}\).
The variational characterization of \(\lambda_{3,\varepsilon}\) yields
\[
\langle\cH_\varepsilon\xi,\xi\rangle
\ge\lambda_{3,\varepsilon}\|\xi\|_{L^2((0,1))}^2.
\]
Hence, by \eqref{eq:Aepsilon}, we obtain
\begin{equation}
\left\langle
-\left(A_\varepsilon-\frac{\Lambda_{1,\varepsilon}}{2}\mathrm{Id}\right)\xi,\xi
\right\rangle
\ge
\left(\Lambda_{2,\varepsilon}+\frac{\Lambda_{1,\varepsilon}}{2}\right)\|\xi\|_{L^2((0,1))}^2.
\label{eq:complementary-prey}
\end{equation}
The mean-zero condition in \eqref{eq:Q-range} and the Neumann Poincar\'e
inequality similarly give
\begin{equation}
\left\langle
-\left(D\partial_{xx}-\gamma_\varepsilon \Lambda_{1,\varepsilon}\mathrm{Id}\right)
\psi,\psi\right\rangle
=D\|\psi'\|_{L^2((0,1))}^2
+\gamma_\varepsilon \Lambda_{1,\varepsilon}\|\psi\|_{L^2((0,1))}^2
\ge
\left(D\pi^2+\gamma_\varepsilon \Lambda_{1,\varepsilon}\right)\|\psi\|_{L^2((0,1))}^2.
\label{eq:complementary-predator}
\end{equation}
By the upper bound for \(\Lambda_{1,\varepsilon}\) and the lower bound for
\(\Lambda_{2,\varepsilon}\) in \eqref{eq:scale-separation}, we obtain
\[
\frac{R\Lambda_{1,\varepsilon}}{\Lambda_{2,\varepsilon}}
 \le\frac{8RC_1}{\nu_2-\nu_1}
 \varepsilon^{-1}e^{-\kappa_1/\varepsilon}
 \longrightarrow0.
\]
Moreover, \(K_\varepsilon\ge1\) and \eqref{eq:delta} give
\[
\frac{\Lambda_{1,\varepsilon}}D
\le\frac{\Lambda_{1,\varepsilon}K_\varepsilon^2}D
\le\delta_{\varepsilon,D}\le\frac{\pi^2}{2R}.
\]
Hence, for small \(\varepsilon\),
\begin{equation}
R\Lambda_{1,\varepsilon}\le\frac{\Lambda_{2,\varepsilon}}2,\qquad
R\Lambda_{1,\varepsilon}\le\frac{D\pi^2}2.
\label{eq:complement-shift-bounds}
\end{equation}

Fix \((z,\eta)\) with \(\operatorname{Re} z\ge-R\), and write \(B=B(z,\eta)\).
For \(w=(\xi,\psi)\in\operatorname{Dom}(B)\), we have
\(\cQ_\varepsilon w=w\).
Applying the Cauchy--Schwarz inequality to the two off-diagonal terms in
\eqref{eq:target-linearization}, then using
\eqref{eq:complementary-prey}--\eqref{eq:complement-shift-bounds}
and \eqref{eq:profile-bound}, gives
\begin{equation}
\begin{aligned}
\operatorname{Re}\langle Bw,w\rangle
&\ge
\frac{\Lambda_{2,\varepsilon}}2\|\xi\|_{L^2((0,1))}^2
+\frac{D\pi^2}2\|\psi\|_{L^2((0,1))}^2
-\Lambda_{1,\varepsilon}K_\varepsilon\|w\|_{L^2((0,1))^2}^2\\
&\ge
\left(\frac12-\frac{\delta_{\varepsilon,D}}{K_\varepsilon}\right)
\left(\frac1{\Lambda_{2,\varepsilon}}+\frac1D\right)^{-1}\|w\|_{L^2((0,1))^2}^2\\
&\ge\frac14
\left(\frac1{\Lambda_{2,\varepsilon}}+\frac1D\right)^{-1}\|w\|_{L^2((0,1))^2}^2.
\end{aligned}
\label{eq:complement-real-part-bound}
\end{equation}
Here we used \eqref{eq:delta}, \(K_\varepsilon\ge1\), and
\(\delta_{\varepsilon,D}\le1/4\).
Another application of the Cauchy--Schwarz inequality yields
\begin{equation}
\|w\|_{L^2((0,1))^2}
\le4\left(\frac1{\Lambda_{2,\varepsilon}}+\frac1D\right)\|Bw\|_{L^2((0,1))^2}.
\label{eq:complement-apriori-bound}
\end{equation}
The diagonal part of \eqref{eq:target-linearization} is self-adjoint
and commutes with \(\cQ_\varepsilon\), so its restriction to
\(\operatorname{Ran}\cQ_\varepsilon\) is self-adjoint.
The remaining terms in \(B\) are bounded. Hence \(B\) is closed and
densely defined, with
\[
\operatorname{Dom}(B^*)=\operatorname{Dom}(B).
\]
Estimate \eqref{eq:complement-apriori-bound} gives injectivity.
It also gives closed range: if \(Bw_n\) converges, then \(w_n\) is Cauchy,
so closedness of \(B\) places the limit of \(Bw_n\) in its range.
For \(w\in\operatorname{Dom}(B^*)\),
\[
\operatorname{Re}\langle B^*w,w\rangle=\operatorname{Re}\langle Bw,w\rangle.
\]
Thus \eqref{eq:complement-real-part-bound} gives
\[
(\operatorname{Ran}B)^\perp=\ker B^*=\{0\}.
\]
The range of \(B\) is both closed and dense in
\(\operatorname{Ran}\cQ_\varepsilon\), hence equals that space.
Therefore \(B\) is invertible, with \eqref{eq:high-resolvent}
following from \eqref{eq:complement-apriori-bound}.
\end{proof}

Using Lemma~\ref{lem:complement-inverse}, define the scaled Schur complement
\begin{equation}
\mathfrak F_{\varepsilon,D}(z,\eta)
:=\frac1{\Lambda_{1,\varepsilon}}\left[
\begin{aligned}
&\cP_\varepsilon(\Lambda_{1,\varepsilon} z-\cL_{\varepsilon,D,\eta})
\cP_\varepsilon\\[-1mm]
&\quad-
\cP_\varepsilon\cL_{\varepsilon,D,\eta}\cQ_\varepsilon
B(z,\eta)^{-1}
\cQ_\varepsilon\cL_{\varepsilon,D,\eta}\cP_\varepsilon
\end{aligned}
\right]_{\mathcal B_\varepsilon}.
\label{eq:reduced-matrix}
\end{equation}

\Needspace{4\baselineskip}
\begin{lemma}[Schur reduction and spectral equivalence]
\label{lem:three-mode-reduction}
Under the hypotheses of Lemma~\ref{lem:complement-inverse}, for every
\((z,\eta)\in\C\times I\) with \(\operatorname{Re} z\ge-R\),
\(\Lambda_{1,\varepsilon}z\) is an eigenvalue of
\(\cL_{\varepsilon,D,\eta}\) if and only if
\begin{equation}
\det\mathfrak F_{\varepsilon,D}(z,\eta)=0.
\label{eq:spectral-equivalence}
\end{equation}
The algebraic multiplicity of \(\Lambda_{1,\varepsilon} z\) as an eigenvalue equals
the multiplicity of \(z\) as a zero of
\(\det\mathfrak F_{\varepsilon,D}(\,\cdot\,,\eta)\).
\end{lemma}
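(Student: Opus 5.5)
We split $\cL:=\cL_{\varepsilon,D,\eta}$ along $L^2((0,1))^2=\operatorname{Ran}\cP_\varepsilon\oplus\operatorname{Ran}\cQ_\varepsilon$ and use the standard Schur complement (Feshbach) argument, with Lemma~\ref{lem:complement-inverse} supplying invertibility of the lower-right block. Both projections are orthogonal. Since $\zeta_{1,\varepsilon},\zeta_{2,\varepsilon}$ are combinations of Neumann eigenfunctions of $\cH_\varepsilon$ and $\1$ lies in the domain of the Neumann Laplacian, $\operatorname{Ran}\cP_\varepsilon\subset\operatorname{Dom}(\cL)$. It follows that $\cQ_\varepsilon$ maps $\operatorname{Dom}(\cL)$ into itself, so $\operatorname{Dom}(\cL)=\operatorname{Ran}\cP_\varepsilon\oplus\operatorname{Dom}(B)$. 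Write $\lambda=\Lambda_{1,\varepsilon}z$ and $T(z):=\lambda-\cL$. In block form $T(z)=\begin{pmatrix}T_{PP}&T_{PQ}\\T_{QP}&B(z,\eta)\end{pmatrix}$, where $T_{PQ}=-\cP_\varepsilon\cL\cQ_\varepsilon$ and $T_{QP}=-\cQ_\varepsilon\cL\cP_\varepsilon$; both are bounded, since the diagonal part of $\cL$ commutes with $\cQ_\varepsilon$ and only the bounded multiplication couplings remain. For $\operatorname{Re}z\ge-R$, Lemma~\ref{lem:complement-inverse} makes $B^{-1}$ bounded, and we have the factorization
\[
T(z)=\begin{pmatrix}\mathrm{Id}&T_{PQ}B^{-1}\\0&\mathrm{Id}\end{pmatrix}
\begin{pmatrix}S(z)&0\\0&B\end{pmatrix}
\begin{pmatrix}\mathrm{Id}&0\\B^{-1}T_{QP}&\mathrm{Id}\end{pmatrix},
\qquad S(z)=T_{PP}-T_{PQ}B^{-1}T_{QP}.
\]
The outer factors are bounded and boundedly invertible, and they preserve the domain. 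By \eqref{eq:reduced-matrix}, the matrix of $S(z)$ in $\mathcal B_\varepsilon$ equals $\Lambda_{1,\varepsilon}\mathfrak F_{\varepsilon,D}(z,\eta)$.

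For the equivalence, suppose $(\mathbf a,w)\in\operatorname{Ran}\cP_\varepsilon\oplus\operatorname{Dom}(B)$ lies in $\ker T(z)$. The second row forces $w=-B^{-1}T_{QP}\mathbf a$, and then $S(z)\mathbf a=0$ with $\mathbf a\neq0$ whenever the kernel vector is nonzero. Conversely, $\mathbf a\in\ker S(z)$ produces the kernel vector $(\mathbf a,-B^{-1}T_{QP}\mathbf a)$. Since $\operatorname{Ran}\cP_\varepsilon$ is three-dimensional, $\ker S(z)\neq\{0\}$ exactly when $\det\mathfrak F_{\varepsilon,D}(z,\eta)=0$. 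The operator $\cL$ has compact resolvent, so its spectrum consists of eigenvalues; this makes the equivalence complete and gives equal geometric multiplicities.

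For algebraic multiplicity, the factorization gives an equivalence of holomorphic operator families in the sense of Gohberg--Sigal: the factors are holomorphic in $z$ on $\{\operatorname{Re}z>-R\}$, since $B(z,\eta)^{-1}$ is an analytic resolvent-type function of $z$, and they are invertible with invertible diagonal entries. Equivalent families have the same partial multiplicities at a point. The family $z\mapsto\Lambda_{1,\varepsilon}z-\cL$ is linear, so its multiplicity at $z_0$ equals the algebraic multiplicity of $\lambda_0$ as an eigenvalue of $\cL$. Here the rescaling $\lambda=\Lambda_{1,\varepsilon}z$ does not change orders of vanishing, and $\det$ of $B(z,\eta)$ plays no role because $B$ is invertible. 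For the finite-dimensional holomorphic matrix $S(z)$, the sum of the partial multiplicities equals the order of $z_0$ as a zero of $\det S(z)$, equivalently of $\det\mathfrak F_{\varepsilon,D}(\cdot,\eta)$.

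An alternative route, if one prefers not to cite Gohberg--Sigal, is to compare traces of Riesz projections. By the logarithmic residue formula, the algebraic multiplicity equals $\operatorname{tr}\frac{1}{2\pi i}\oint T(z)^{-1}T'(z)\,\dd z$. Inverting the factorization expresses $T^{-1}$ through $S^{-1}$ plus terms holomorphic near $z_0$, so the trace reduces to $\frac1{2\pi i}\oint\operatorname{tr}(S^{-1}S')\,\dd z$, the order of the zero of $\det S$. The step I expect to need the most care is exactly this multiplicity transfer, specifically justifying holomorphy of $z\mapsto B(z,\eta)^{-1}$ and the domain bookkeeping for the unbounded block. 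The other steps are routine algebra once Lemma~\ref{lem:complement-inverse} is available.
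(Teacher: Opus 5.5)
Your proposal is correct and follows the paper's proof essentially step for step. It uses the same splitting $\operatorname{Dom}(\cL)=\cX_\varepsilon\oplus\operatorname{Dom}(B)$, the same kernel bijection $p\mapsto p-B^{-1}T_{QP}p$, the same triangular factorization isolating $\Lambda_{1,\varepsilon}\mathfrak F_{\varepsilon,D}\oplus B$, and the same appeal to Gohberg--Sigal for algebraic multiplicities.

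The two points you flag as delicate are the ones the paper handles explicitly. First, it uses $B(z,\eta)B(z_0,\eta)^{-1}=\mathrm{Id}+\Lambda_{1,\varepsilon}(z-z_0)B(z_0,\eta)^{-1}$ to get holomorphy of $B^{-1}$ in a full neighborhood of $z_0$. This is needed on the line $\operatorname{Re}z_0=-R$, which your open half-plane $\{\operatorname{Re}z>-R\}$ leaves out. Second, it composes with $J_0=\operatorname{diag}(I_3,B(z_0,\eta)^{-1})$ to turn the unbounded family into bounded Fredholm families on $L^2((0,1))^2$.
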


\begin{proof}

Identify \(\cX_\varepsilon\) with \(\C^3\) through the ordered orthonormal
basis \(\mathcal B_\varepsilon\) in \eqref{eq:slow-basis}. Put
\(\lambda=\Lambda_{1,\varepsilon} z\). For
\(w\in\operatorname{Dom}(\cL_{\varepsilon,D,\eta})\), write
\[
w=p+q,\qquad p=\cP_\varepsilon w,
\qquad\text{and}\qquad q=\cQ_\varepsilon w.
\]
The selected functions \(\zeta_{1,\varepsilon}\),
\(\zeta_{2,\varepsilon}\), and \(\1\) belong to \(H_N^2((0,1))\), so both
projections preserve the operator domain.
Applying the complementary projections in \eqref{eq:PQ} to
\((\Lambda_{1,\varepsilon}z-\cL_{\varepsilon,D,\eta})w=0\) gives
\begin{equation}
\left\{
\begin{aligned}
&\cP_\varepsilon(\Lambda_{1,\varepsilon} z-\cL_{\varepsilon,D,\eta})
\cP_\varepsilon p
-\cP_\varepsilon\cL_{\varepsilon,D,\eta}\cQ_\varepsilon q=0,\\
&-\cQ_\varepsilon\cL_{\varepsilon,D,\eta}\cP_\varepsilon p
+B(z,\eta)q=0.
\end{aligned}
\right.
\label{eq:pq-system}
\end{equation}
Conversely, adding the two equations in \eqref{eq:pq-system} recovers
the original equation.
The second equation in \eqref{eq:pq-system} gives
\begin{equation}
q=B(z,\eta)^{-1}
\cQ_\varepsilon\cL_{\varepsilon,D,\eta}\cP_\varepsilon p.
\label{eq:remaining-component}
\end{equation}
Substitution of \eqref{eq:remaining-component} into the first equation in
\eqref{eq:pq-system} gives
\(\mathfrak F_{\varepsilon,D}(z,\eta)p=0\). Thus
\[
\begin{aligned}
\ker\mathfrak F_{\varepsilon,D}(z,\eta)
&\longrightarrow\ker(\Lambda_{1,\varepsilon}z-\cL_{\varepsilon,D,\eta}),\\
p&\longmapsto w=p+B(z,\eta)^{-1}
\cQ_\varepsilon\cL_{\varepsilon,D,\eta}\cP_\varepsilon p
\end{aligned}
\]
has inverse \(w\mapsto\cP_\varepsilon w=p\).
Hence \eqref{eq:spectral-equivalence} follows.

To verify the statement about algebraic multiplicity, write, relative to
\(\cX_\varepsilon\oplus\operatorname{Ran}\cQ_\varepsilon\),
\[
\Lambda_{1,\varepsilon} z-\cL_{\varepsilon,D,\eta}
=\begin{pmatrix}
A_{11}&A_{12}\\
A_{21}&B
\end{pmatrix}.
\]
Direct multiplication gives
\begin{equation}
\begin{pmatrix}\mathrm{Id}&-A_{12}B^{-1}\\0&\mathrm{Id}\end{pmatrix}
\begin{pmatrix}A_{11}&A_{12}\\A_{21}&B\end{pmatrix}
\begin{pmatrix}\mathrm{Id}&0\\-B^{-1}A_{21}&\mathrm{Id}\end{pmatrix}
=
\begin{pmatrix}\Lambda_{1,\varepsilon}\mathfrak F_{\varepsilon,D}&0\\0&B\end{pmatrix}.
\label{eq:block-factorization}
\end{equation}
Fix \(\varepsilon,D,\eta\). We use the spaces
\begin{equation}
\begin{aligned}
\mathscr H&:=L^2((0,1))^2
=\cX_\varepsilon\oplus\operatorname{Ran}\cQ_\varepsilon,\\
\mathscr D&:=\operatorname{Dom}(\cL_{\varepsilon,D,\eta})=\{w\in H^2((0,1))^2:w'(0)=w'(1)=0\}.
\end{aligned}
\label{eq:reduction-domain-spaces}
\end{equation}
The space \(\mathscr D\) is independent of \(z\). On \(\mathscr D\), take
the graph norm, defined explicitly by
\[
\|w\|_{\mathscr D}:=
\left(\|w\|_{L^2((0,1))^2}^2+
\|\cL_{\varepsilon,D,\eta}w\|_{L^2((0,1))^2}^2\right)^{1/2}.
\]
Thus \(\cL_{\varepsilon,D,\eta}:\mathscr D\to\mathscr H\) is bounded.
For \(\operatorname{Re} z_0\ge-R\), we have
\begin{equation}
B(z,\eta)B(z_0,\eta)^{-1}
=\mathrm{Id}+\Lambda_{1,\varepsilon}(z-z_0)B(z_0,\eta)^{-1}.
\label{eq:normalized-complement}
\end{equation}
The operator in \eqref{eq:normalized-complement} has an analytic inverse
near \(z_0\), including \(\operatorname{Re} z_0=-R\).
In \eqref{eq:block-factorization}, denote the triangular factors by
\(S(z)\) on the left and \(T(z)\) on the right.
The maps \(S(z):\mathscr H\to\mathscr H\) and
\(T(z):\mathscr D\to\mathscr D\) depend analytically on \(z\).
Both maps have bounded inverses near \(z_0\).

By Lemma~\ref{lem:complement-inverse}, with the domain in
\eqref{eq:B-definition},
\(B(z_0,\eta)^{-1}\) is a bounded map from
\(\operatorname{Ran}\cQ_\varepsilon\) onto
\(\mathscr D\cap\operatorname{Ran}\cQ_\varepsilon\).
Since
\(\mathscr D=\cX_\varepsilon\oplus
(\mathscr D\cap\operatorname{Ran}\cQ_\varepsilon)\), the map
\begin{equation}
\begin{aligned}
J_0&:=\operatorname{diag}(I_3,B(z_0,\eta)^{-1}):
\mathscr H\longrightarrow\mathscr D,\\
J_0(p+f)&=p+B(z_0,\eta)^{-1}f,
\qquad p\in\cX_\varepsilon,\quad
f\in\operatorname{Ran}\cQ_\varepsilon,
\end{aligned}
\label{eq:domain-identification}
\end{equation}
is bounded and bijective, with bounded inverse
\[
J_0^{-1}w=\cP_\varepsilon w+B(z_0,\eta)\cQ_\varepsilon w,
\qquad w\in\mathscr D.
\]
Here \(I_3\) acts on \(\cX_\varepsilon\) in the basis
\eqref{eq:slow-basis}.
After composing \eqref{eq:block-factorization} on the right with \(J_0\),
the middle operator becomes
\((\Lambda_{1,\varepsilon}z-\cL_{\varepsilon,D,\eta})J_0\);
the right triangular factor becomes \(J_0^{-1}T(z)J_0\).
These are bounded operators on \(\mathscr H\).
The identity therefore gives an equivalence of bounded analytic Fredholm
families of index zero on \(\mathscr H\), whose lower block
\eqref{eq:normalized-complement} remains invertible.
Since \(\cL_{\varepsilon,D,\eta}\) has compact resolvent, its eigenvalues
are isolated and have finite algebraic multiplicity.
Let \(m\ge1\) be the order of the zero \(z_0\) of
\(\det\mathfrak F_{\varepsilon,D}(\,\cdot\,,\eta)\). Thus
\begin{equation}
\det\mathfrak F_{\varepsilon,D}(z,\eta)
=(z-z_0)^m h(z),\qquad h(z_0)\ne0,
\label{eq:determinant-zero-factorization}
\end{equation}
where \(m\) is an integer and \(h\) is analytic near \(z_0\).
Since \(B(z,\eta)\) is invertible, \eqref{eq:block-factorization} together
with \cite[\S\S~1.2--1.3 and Theorem~2.1]{GohbergSigal1971} shows that the
eigenvalue \(\Lambda_{1,\varepsilon}z_0\) has algebraic multiplicity \(m\).
In particular, a simple zero of the determinant gives an algebraically
simple eigenvalue.
\end{proof}

\begin{lemma}[Approximation by the target matrix]
\label{lem:reduced-matrix-approximation}
Under the hypotheses of Lemma~\ref{lem:complement-inverse},
\(\mathfrak F_{\varepsilon,D}(z,\eta)\) is analytic in \(z\) and \(C^1\) in
\(\eta\). Uniformly for \(\operatorname{Re} z\ge-R\) and \(\eta\in I\), it satisfies
\begin{subequations}
\label{eq:reduced-matrix-approximation}
\begin{align}
\|\mathfrak F_{\varepsilon,D}
 -(zI_3-M_{\gamma_\varepsilon}(\eta))\|
&\le4\delta_{\varepsilon,D},
\label{eq:reduced-matrix-error-zero}\\
\|\partial_z\mathfrak F_{\varepsilon,D}-I_3\|
&\le4\delta_{\varepsilon,D},
\label{eq:reduced-matrix-error-z}\\
\|\partial_\eta\mathfrak F_{\varepsilon,D}
 +\partial_\eta M_{\gamma_\varepsilon}(\eta)\|
&\le12\delta_{\varepsilon,D}.
\label{eq:reduced-matrix-error-eta}
\end{align}
\end{subequations}
\end{lemma}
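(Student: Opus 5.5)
The plan is to split \(\mathfrak F_{\varepsilon,D}\) into the compressed part \(\Lambda_{1,\varepsilon}^{-1}[\cP_\varepsilon(\Lambda_{1,\varepsilon}z-\cL)\cP_\varepsilon]_{\mathcal B_\varepsilon}\) and the Schur correction \(\Lambda_{1,\varepsilon}^{-1}\cP_\varepsilon\cL\cQ_\varepsilon B^{-1}\cQ_\varepsilon\cL\cP_\varepsilon\). I will show that the first part equals \(zI_3-M_{\gamma_\varepsilon}(\eta)\) exactly, and that the second part is \(O(\delta_{\varepsilon,D})\). For the compression, note that \(A_\varepsilon\) commutes with \(P_\varepsilon\). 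Since \(A_\varepsilon\phi_1=0\) and \(A_\varepsilon\phi_2=-\Lambda_{1,\varepsilon}\phi_2\), we get
\[
A_\varepsilon\zeta_1=-\tfrac{\Lambda_{1,\varepsilon}}2(\zeta_1-\zeta_2),\qquad
A_\varepsilon\zeta_2=\tfrac{\Lambda_{1,\varepsilon}}2(\zeta_1-\zeta_2).
\]
Subtracting \(\Lambda_{1,\varepsilon}/2\) therefore gives the prey block \(\Lambda_{1,\varepsilon}\begin{pmatrix}-1&1/2\\1/2&-1\end{pmatrix}\). The predator entry is \(\langle(D\partial_{xx}-\gamma_\varepsilon\Lambda_{1,\varepsilon})\1,\1\rangle=-\gamma_\varepsilon\Lambda_{1,\varepsilon}\). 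The coupling entries are \(-\Lambda_{1,\varepsilon}\langle k_{b,\varepsilon},\zeta_j\rangle\) in the third column and \(\Lambda_{1,\varepsilon}\langle k_{c,\varepsilon,\eta},\zeta_j\rangle\) in the third row; by \eqref{eq:moments} these are \(-L,-1\) and \(1,\eta L\), after dividing by \(\Lambda_{1,\varepsilon}\). Everything here is real, so conjugation plays no role. This reproduces \(M_\gamma(\eta)\) exactly.

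For the correction, only off-diagonal blocks of \(\cL\) couple \(\cP_\varepsilon\) and \(\cQ_\varepsilon\), because the diagonal blocks commute with the projections (\(A_\varepsilon\) with \(P_\varepsilon\), \(D\partial_{xx}\) with \(P\)). Hence \(\|\cQ_\varepsilon\cL\cP_\varepsilon\|\) and \(\|\cP_\varepsilon\cL\cQ_\varepsilon\|\) are each at most \(\Lambda_{1,\varepsilon}\sup|k|\le\Lambda_{1,\varepsilon}K_\varepsilon\). Combining this with \eqref{eq:high-resolvent} bounds the correction by
\[
\Lambda_{1,\varepsilon}^{-1}\cdot\Lambda_{1,\varepsilon}^2K_\varepsilon^2\cdot4\left(\frac1{\Lambda_{2,\varepsilon}}+\frac1D\right)=4\delta_{\varepsilon,D},
\]
which gives \eqref{eq:reduced-matrix-error-zero}. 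Analyticity in \(z\) follows from the resolvent-type identity \eqref{eq:normalized-complement}. Differentiating, \(\partial_zB^{-1}=-\Lambda_{1,\varepsilon}B^{-2}\), so
\[
\|\partial_z(\text{correction})\|\le\Lambda_{1,\varepsilon}K_\varepsilon^2\cdot\Lambda_{1,\varepsilon}\cdot16\left(\frac1{\Lambda_{2,\varepsilon}}+\frac1D\right)^2=16\delta_{\varepsilon,D}\cdot\Lambda_{1,\varepsilon}\left(\frac1{\Lambda_{2,\varepsilon}}+\frac1D\right).
\]
The last factor is at most \(\delta_{\varepsilon,D}/K_\varepsilon^2\le1/4\), so the bound is at most \(4\delta_{\varepsilon,D}\). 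This yields \eqref{eq:reduced-matrix-error-z}.

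For the \(\eta\)-derivative, \(\cL\) is affine in \(\eta\): only \(k_{c,\varepsilon,\eta}\) depends on \(\eta\), and \(\partial_\eta k_{c}\) is bounded by \(K_\varepsilon\) in sup norm. The compressed part contributes exactly \(-\partial_\eta M\). The correction has three \(\eta\)-dependent factors: the lower coupling \(\cQ\cL\cP\) (with \(\|\partial_\eta\|\le\Lambda_{1,\varepsilon}K_\varepsilon\)), \(B^{-1}\) (with \(\partial_\eta B^{-1}=B^{-1}(\partial_\eta\cL)B^{-1}\), where \(\partial_\eta\cL\) is bounded off-diagonal with norm \(\le\Lambda_{1,\varepsilon}K_\varepsilon\)), and the upper coupling, which is independent of \(\eta\). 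The product rule gives one term of size \(4\delta_{\varepsilon,D}\) from the lower coupling. The \(B^{-1}\) term is of size \(16\delta_{\varepsilon,D}\cdot\Lambda_{1,\varepsilon}K_\varepsilon(1/\Lambda_{2,\varepsilon}+1/D)\le16\delta_{\varepsilon,D}\cdot\delta_{\varepsilon,D}/K_\varepsilon\le4\delta_{\varepsilon,D}\). The total is within \(12\delta_{\varepsilon,D}\), and continuity of these expressions in \(\eta\) gives \(C^1\) dependence.

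The main point needing care is that \(\partial_\eta\cL\), being a bounded multiplication operator, can be differentiated through \(B^{-1}\) on a fixed domain. Since \(\operatorname{Dom}(B)\) is independent of \(\eta\) and \(\partial_\eta B=-\cQ(\partial_\eta\cL)\cQ\) is bounded, the identity \(B(\eta)^{-1}-B(\eta')^{-1}=B(\eta)^{-1}(B(\eta')-B(\eta))B(\eta')^{-1}\) justifies differentiability, uniformly on \(\operatorname{Re}z\ge-R\) by \eqref{eq:high-resolvent}.
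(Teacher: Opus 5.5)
Your approach is the paper's. The exact compression via \eqref{eq:localized-operator-action} and \eqref{eq:moments}, the coupling bounds \(\|\cP_\varepsilon\cL\cQ_\varepsilon\|,\|\cQ_\varepsilon\cL\cP_\varepsilon\|\le\Lambda_{1,\varepsilon}K_\varepsilon\), the use of \eqref{eq:high-resolvent}, and \(\partial_zB^{-1}=-\Lambda_{1,\varepsilon}B^{-2}\) all match, and your first two estimates are correct. The \(\eta\)-derivative step, however, contains a false claim. The upper coupling \(\cP_\varepsilon\cL_{\varepsilon,D,\eta}\cQ_\varepsilon\) is \emph{not} independent of \(\eta\). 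For \((\xi,\psi)\in\operatorname{Ran}\cQ_\varepsilon\) it equals \(\Lambda_{1,\varepsilon}\bigl(-P_\varepsilon[k_{b,\varepsilon}\psi],\,P[k_{c,\varepsilon,\eta}\xi]\bigr)\); see \eqref{eq:cross-space-block-PQ}. The lower-left block of \(\cL\), which carries all of the \(\eta\)-dependence, enters both cross-space operators, not only \(\cQ_\varepsilon\cL\cP_\varepsilon\).

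Consequently the product rule gives three terms, not two. The term you dropped is \(-\Lambda_{1,\varepsilon}^{-1}\partial_\eta(\cP_\varepsilon\cL\cQ_\varepsilon)B^{-1}\cQ_\varepsilon\cL\cP_\varepsilon\). Since \(\|\partial_\eta k_{c,\varepsilon,\eta}\|_{L^\infty}\le K_\varepsilon\), it is bounded by \(\Lambda_{1,\varepsilon}^{-1}\cdot\Lambda_{1,\varepsilon}K_\varepsilon\cdot4(\Lambda_{2,\varepsilon}^{-1}+D^{-1})\cdot\Lambda_{1,\varepsilon}K_\varepsilon=4\delta_{\varepsilon,D}\). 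With it, the total is \(4\delta_{\varepsilon,D}+16\delta_{\varepsilon,D}^2/K_\varepsilon+4\delta_{\varepsilon,D}\le12\delta_{\varepsilon,D}\). This is exactly how the paper obtains the constant \(12\) in \eqref{eq:reduced-matrix-error-eta}. So the stated estimate is true, but your argument bounds only two of the three contributions and reaches \(12\delta_{\varepsilon,D}\) through slack rather than a complete count. Replace the independence claim with this third term and the proof is complete.
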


\begin{proof}
By \eqref{eq:Aepsilon}, the first two eigenfunctions satisfy
\begin{equation}
A_\varepsilon\phi_{1,\varepsilon}=0
\quad\text{and}\quad
A_\varepsilon\phi_{2,\varepsilon}
=-\Lambda_{1,\varepsilon}\phi_{2,\varepsilon}.
\label{eq:eigenpairs}
\end{equation}
Combining \eqref{eq:eigenpairs} with \eqref{eq:localized-modes} gives
\begin{equation}
\begin{aligned}
\left(A_\varepsilon-\frac{\Lambda_{1,\varepsilon}}{2}\mathrm{Id}\right)
\zeta_{1,\varepsilon}
&=-\Lambda_{1,\varepsilon}\zeta_{1,\varepsilon}
+\frac{\Lambda_{1,\varepsilon}}{2}\zeta_{2,\varepsilon},\\
\left(A_\varepsilon-\frac{\Lambda_{1,\varepsilon}}{2}\mathrm{Id}\right)
\zeta_{2,\varepsilon}
&=\frac{\Lambda_{1,\varepsilon}}{2}\zeta_{1,\varepsilon}
-\Lambda_{1,\varepsilon}\zeta_{2,\varepsilon}.
\end{aligned}
\label{eq:localized-operator-action}
\end{equation}
Since \(\|\1\|_{L^2((0,1))}=1\), the two moment pairs in
\eqref{eq:moments} give the upper-right column \(-(L,1)^\top\) and the
lower-left row \((1,\eta L)\), respectively.
Finally,
\begin{equation}
(D\partial_{xx}-\gamma_\varepsilon \Lambda_{1,\varepsilon})\1
=-\gamma_\varepsilon \Lambda_{1,\varepsilon}\1.
\label{eq:constant-operator-action}
\end{equation}
Using \eqref{eq:localized-operator-action}, \eqref{eq:constant-operator-action}, and
\eqref{eq:moments} in \eqref{eq:target-linearization}, then dividing the
compressed operator by \(\Lambda_{1,\varepsilon}\), yields, for every
\((D,\eta)\in(0,\infty)\times I\),
\begin{equation}
\left[
\left.\frac1{\Lambda_{1,\varepsilon}}
\cP_\varepsilon\cL_{\varepsilon,D,\eta}\cP_\varepsilon
\right|_{\cX_\varepsilon}
\right]_{\mathcal B_\varepsilon}
=M_{\gamma_\varepsilon}(\eta).
\label{eq:compression}
\end{equation}

By \eqref{eq:compression}, the first term in \eqref{eq:reduced-matrix} is
\(zI_3-M_{\gamma_\varepsilon}(\eta)\). Therefore we obtain
\begin{equation}
\mathfrak F_{\varepsilon,D}(z,\eta)
-(zI_3-M_{\gamma_\varepsilon}(\eta))
=-\frac1{\Lambda_{1,\varepsilon}}
\left[
\cP_\varepsilon\cL_{\varepsilon,D,\eta}\cQ_\varepsilon
B(z,\eta)^{-1}
\cQ_\varepsilon\cL_{\varepsilon,D,\eta}\cP_\varepsilon
\right]_{\mathcal B_\varepsilon}.
\label{eq:reduced-correction}
\end{equation}
The diagonal operators in \eqref{eq:target-linearization} preserve both
\(\operatorname{Ran}\cP_\varepsilon\) and
\(\operatorname{Ran}\cQ_\varepsilon\). Hence the two off-diagonal
cross-species terms in \eqref{eq:target-linearization} are the only
contributions to the operators coupling these subspaces. More explicitly,
\begin{subequations}
\label{eq:cross-space-blocks}
\begin{align}
\cP_\varepsilon\cL_{\varepsilon,D,\eta}\cQ_\varepsilon
&=\Lambda_{1,\varepsilon}
\begin{pmatrix}
0&-P_\varepsilon k_{b,\varepsilon}(\mathrm{Id}-P)\\
P k_{c,\varepsilon,\eta}(\mathrm{Id}-P_\varepsilon)&0
\end{pmatrix},
\label{eq:cross-space-block-PQ}\\
\cQ_\varepsilon\cL_{\varepsilon,D,\eta}\cP_\varepsilon
&=\Lambda_{1,\varepsilon}
\begin{pmatrix}
0&-(\mathrm{Id}-P_\varepsilon)k_{b,\varepsilon}P\\
(\mathrm{Id}-P)k_{c,\varepsilon,\eta}P_\varepsilon&0
\end{pmatrix}.
\label{eq:cross-space-block-QP}
\end{align}
\end{subequations}
For \(w=(\xi,\psi)\), the two identities in
\eqref{eq:cross-space-blocks} give
\begin{equation}
\begin{aligned}
\|\cP_\varepsilon\cL_{\varepsilon,D,\eta}
       \cQ_\varepsilon w\|_{L^2((0,1))^2}^2
={}&\Lambda_{1,\varepsilon}^2
\bigl(
\|P_\varepsilon[k_{b,\varepsilon}(\mathrm{Id}-P)\psi]\|_{L^2((0,1))}^2\\
&\qquad+\|P[k_{c,\varepsilon,\eta}(\mathrm{Id}-P_\varepsilon)\xi]\|_{L^2((0,1))}^2
\bigr),\\
\|\cQ_\varepsilon\cL_{\varepsilon,D,\eta}
       \cP_\varepsilon w\|_{L^2((0,1))^2}^2
={}&\Lambda_{1,\varepsilon}^2
\bigl(
\|(\mathrm{Id}-P_\varepsilon)[k_{b,\varepsilon}P\psi]\|_{L^2((0,1))}^2\\
&\qquad+\|(\mathrm{Id}-P)[k_{c,\varepsilon,\eta}P_\varepsilon\xi]\|_{L^2((0,1))}^2
\bigr).
\end{aligned}
\label{eq:cross-space-applied-norms}
\end{equation}
By \eqref{eq:profile-bound}, we bound both right-hand sides in
\eqref{eq:cross-space-applied-norms} by
\[
\Lambda_{1,\varepsilon}^2
\left(
\|k_{b,\varepsilon}\|_{L^\infty((0,1))}^2\|\psi\|_{L^2((0,1))}^2
+\|k_{c,\varepsilon,\eta}\|_{L^\infty((0,1))}^2\|\xi\|_{L^2((0,1))}^2
\right)
\le \Lambda_{1,\varepsilon}^2K_\varepsilon^2\|w\|_{L^2((0,1))^2}^2.
\]
Taking the supremum over \(\|w\|_{L^2((0,1))^2}=1\) yields
\begin{equation}
\|\cP_\varepsilon\cL_{\varepsilon,D,\eta}\cQ_\varepsilon\|
\le \Lambda_{1,\varepsilon} K_\varepsilon,
\qquad
\|\cQ_\varepsilon\cL_{\varepsilon,D,\eta}\cP_\varepsilon\|
\le \Lambda_{1,\varepsilon} K_\varepsilon.
\label{eq:cross-space-block-bounds}
\end{equation}
Applying \eqref{eq:cross-space-block-bounds} and
\eqref{eq:high-resolvent} to \eqref{eq:reduced-correction} gives
\[
\|\mathfrak F_{\varepsilon,D}
 -(zI_3-M_{\gamma_\varepsilon}(\eta))\|
\le
\frac1{\Lambda_{1,\varepsilon}}
 (\Lambda_{1,\varepsilon} K_\varepsilon)\cdot
 4\left(\frac1{\Lambda_{2,\varepsilon}}+\frac1D\right)\cdot
 (\Lambda_{1,\varepsilon} K_\varepsilon)
 =4\delta_{\varepsilon,D}.
\]
This proves \eqref{eq:reduced-matrix-error-zero}.

The operator \(B(z,\eta)\) is analytic in \(z\), \(C^1\) in \(\eta\), and
invertible by Lemma~\ref{lem:complement-inverse}. Hence its inverse has the
same regularity. We have
\(\partial_zB=\Lambda_{1,\varepsilon}
\mathrm{Id}_{\operatorname{Ran}\cQ_\varepsilon}\) and
\(\partial_\eta B=-\cQ_\varepsilon
(\partial_\eta\cL_{\varepsilon,D,\eta})\cQ_\varepsilon\).
Differentiating \(B(z,\eta)B(z,\eta)^{-1}=\mathrm{Id}\) therefore gives
\begin{equation}
\partial_zB^{-1}=-\Lambda_{1,\varepsilon} B^{-2}
\quad\text{and}\quad
\partial_\eta B^{-1}
=B^{-1}\cQ_\varepsilon
(\partial_\eta\cL_{\varepsilon,D,\eta})
\cQ_\varepsilon B^{-1}.
\label{eq:B-inverse-derivatives}
\end{equation}

The two cross-space operators in
\eqref{eq:cross-space-blocks} do not depend on \(z\). Therefore,
differentiating \eqref{eq:reduced-correction} and using the first identity in
\eqref{eq:B-inverse-derivatives} gives the exact formula
\[
\partial_z\mathfrak F_{\varepsilon,D}-I_3
=
\left[
\cP_\varepsilon\cL_{\varepsilon,D,\eta}\cQ_\varepsilon
B(z,\eta)^{-2}
\cQ_\varepsilon\cL_{\varepsilon,D,\eta}\cP_\varepsilon
\right]_{\mathcal B_\varepsilon}.
\]
Since \(\mathcal B_\varepsilon\) is orthonormal,
\eqref{eq:cross-space-block-bounds} and \eqref{eq:high-resolvent} give
\[
\begin{aligned}
\|\partial_z\mathfrak F_{\varepsilon,D}-I_3\|
&\le
(\Lambda_{1,\varepsilon} K_\varepsilon)
\,16\left(\frac1{\Lambda_{2,\varepsilon}}+\frac1D\right)^2
(\Lambda_{1,\varepsilon} K_\varepsilon)\\
&=16\frac{\delta_{\varepsilon,D}^2}{K_\varepsilon^2}
\le16\delta_{\varepsilon,D}^2
\le4\delta_{\varepsilon,D}.
\end{aligned}
\]
Here we used \eqref{eq:delta}, \(K_\varepsilon\ge1\), and
\(\delta_{\varepsilon,D}\le1/4\), proving
\eqref{eq:reduced-matrix-error-z}.

For the \(\eta\)-derivative, the quantities
\(\Lambda_{1,\varepsilon},\gamma_\varepsilon,P_\varepsilon,P\), and
\(k_{b,\varepsilon}\) are fixed; only \(k_{c,\varepsilon,\eta}\) varies.
Thus
\begin{equation}
\partial_\eta\cL_{\varepsilon,D,\eta}
=\Lambda_{1,\varepsilon}
\begin{pmatrix}
0&0\\
\partial_\eta k_{c,\varepsilon,\eta}&0
\end{pmatrix}.
\label{eq:operator-eta-derivative}
\end{equation}
Since the projections are independent of \(\eta\),
differentiating the two blocks in \eqref{eq:cross-space-blocks} and
using \eqref{eq:profile-bound} gives
\begin{equation}
\begin{aligned}
\|\partial_\eta(
\cP_\varepsilon\cL_{\varepsilon,D,\eta}\cQ_\varepsilon)\|
&\le \Lambda_{1,\varepsilon} K_\varepsilon,\\
\|\partial_\eta(
\cQ_\varepsilon\cL_{\varepsilon,D,\eta}\cP_\varepsilon)\|
&\le \Lambda_{1,\varepsilon} K_\varepsilon,\\
\|\cQ_\varepsilon
(\partial_\eta\cL_{\varepsilon,D,\eta})\cQ_\varepsilon\|
&\le \Lambda_{1,\varepsilon} K_\varepsilon.
\end{aligned}
\label{eq:cross-space-eta-bounds}
\end{equation}
The second identity in \eqref{eq:B-inverse-derivatives}, the third estimate
in \eqref{eq:cross-space-eta-bounds}, and \eqref{eq:high-resolvent} give
\begin{equation}
\|\partial_\eta B^{-1}\|
\le16\Lambda_{1,\varepsilon} K_\varepsilon
\left(\frac1{\Lambda_{2,\varepsilon}}+\frac1D\right)^2.
\label{eq:B-inverse-eta-bound}
\end{equation}

Differentiating all three factors in the correction
\eqref{eq:reduced-correction} gives
\begin{equation}
\begin{aligned}
\partial_\eta\mathfrak F_{\varepsilon,D}
+\partial_\eta M_{\gamma_\varepsilon}(\eta)
=-\frac1{\Lambda_{1,\varepsilon}}\bigl[&
\partial_\eta(
\cP_\varepsilon\cL_{\varepsilon,D,\eta}\cQ_\varepsilon)
B^{-1}\cQ_\varepsilon\cL_{\varepsilon,D,\eta}\cP_\varepsilon\\
&+\cP_\varepsilon\cL_{\varepsilon,D,\eta}\cQ_\varepsilon
(\partial_\eta B^{-1})
\cQ_\varepsilon\cL_{\varepsilon,D,\eta}\cP_\varepsilon\\
&+\cP_\varepsilon\cL_{\varepsilon,D,\eta}\cQ_\varepsilon B^{-1}
\partial_\eta(
\cQ_\varepsilon\cL_{\varepsilon,D,\eta}\cP_\varepsilon)
\bigr]_{\mathcal B_\varepsilon}.
\end{aligned}
\label{eq:reduced-eta-derivative}
\end{equation}
Applying \eqref{eq:cross-space-block-bounds},
\eqref{eq:cross-space-eta-bounds}, \eqref{eq:high-resolvent}, and
\eqref{eq:B-inverse-eta-bound} to the three terms in
\eqref{eq:reduced-eta-derivative} gives
\[
\begin{aligned}
\|\partial_\eta\mathfrak F_{\varepsilon,D}
+\partial_\eta M_{\gamma_\varepsilon}(\eta)\|
&\le4\delta_{\varepsilon,D}
+16\frac{\delta_{\varepsilon,D}^2}{K_\varepsilon}
+4\delta_{\varepsilon,D}\\
&\le12\delta_{\varepsilon,D},
\end{aligned}
\]
since \(K_\varepsilon\ge1\) and \(\delta_{\varepsilon,D}\le1/4\).
This proves \eqref{eq:reduced-matrix-error-eta}.
\end{proof}

\subsection{Stability change and Hopf bifurcation}
\label{subsec:prey-hopf}

We now use \eqref{eq:spectral-equivalence} to prove the loss of stability through
a conjugate eigenvalue crossing and establish Hopf bifurcation at the crossing parameter.
Recall that \(\eta_*(\gamma)\), defined in \eqref{eq:matrix-threshold},
is the value of \(\eta\) at which the target matrix \(M_\gamma(\eta)\)
loses stability. The next lemma locates the crossing of the full
linearization near \(\eta_*(\gamma_\varepsilon)\).

\Needspace{5\baselineskip}
\begin{lemma}[Loss of stability through a conjugate eigenvalue pair]
\label{lem:critical-spectrum}
There exist constants \(\rho_*,\delta_{\mathrm{adm}}>0\),
independent of \(\varepsilon,D\), such that, for every pair
\((\varepsilon,D)\) satisfying
\[
0<\varepsilon<\varepsilon_{\mathrm{red}},\qquad
D>0,\qquad
\delta_{\varepsilon,D}\le\delta_{\mathrm{adm}},
\]
the following assertions hold.
\begin{enumerate}
\item[(i)]
The interval
\[
I_\varepsilon^{\mathrm{root}}
:=[\eta_*(\gamma_\varepsilon)-\rho_*,
   \eta_*(\gamma_\varepsilon)+\rho_*]
\]
is compactly contained in \(I\).
It contains a unique parameter \(\eta_{\varepsilon,D}\)
at which \(\cL_{\varepsilon,D,\eta_{\varepsilon,D}}\)
has a pair of algebraically simple eigenvalues
\[
\lambda_{\varepsilon,D}^{\pm}
=\pm i\omega_{\varepsilon,D},
\qquad \omega_{\varepsilon,D}>0.
\]

\item[(ii)]
At \(\eta=\eta_{\varepsilon,D}\),
\[
\sigma(\cL_{\varepsilon,D,\eta_{\varepsilon,D}})
\setminus
\{\lambda_{\varepsilon,D}^{+},\lambda_{\varepsilon,D}^{-}\}
\subset\{z\in\C:\operatorname{Re}z<0\}.
\]

\item[(iii)]
The local eigenvalue branch
\(\lambda_{\varepsilon,D}^{+}(\eta)\) satisfies
\[
\left.
\partial_\eta\operatorname{Re}\lambda_{\varepsilon,D}^{+}(\eta)
\right|_{\eta=\eta_{\varepsilon,D}}>0.
\]
\end{enumerate}
\end{lemma}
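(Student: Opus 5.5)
The plan is to reduce everything to the scalar function \(f(z,\eta):=\det\mathfrak F_{\varepsilon,D}(z,\eta)\) on the half-plane \(\operatorname{Re}z\ge-R\). It should then be treated as a small \(C^1\) perturbation of the target characteristic polynomial \(p_\gamma(z,\eta):=\det(zI_3-M_\gamma(\eta))\), with \(\gamma=\gamma_\varepsilon\).

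\textbf{Step 1: closeness of determinants.} Lemma~\ref{lem:reduced-matrix-approximation} and multilinearity of the \(3\times3\) determinant give, for \(|z|\le R\) and \(\eta\in I\),
\[
|f-p_\gamma|+|\partial_zf-\partial_zp_\gamma|+|\partial_\eta f-\partial_\eta p_\gamma|\le C_R\,\delta_{\varepsilon,D},
\]
with \(C_R\) independent of \(\varepsilon,D,\gamma\in[0,\gamma_0]\).

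For \(|z|\ge R\) with \(\operatorname{Re}z\ge-R\), write
\[
\mathfrak F=(zI_3-M_\gamma)+E,\qquad \|E\|\le4\delta_{\varepsilon,D}\le1.
\]
Since \(\|M_\gamma\|\le(R-1)/2\), the matrix \(\mathfrak F\) is invertible there. So every zero of \(f\) with \(\operatorname{Re}z\ge-R\) lies in \(|z|<R\).

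By Lemma~\ref{lem:three-mode-reduction}, eigenvalues \(\lambda=\Lambda_{1,\varepsilon}z\) of \(\cL_{\varepsilon,D,\eta}\) with \(\operatorname{Re}z\ge-R\) correspond to zeros of \(f\), counted with multiplicity. All other eigenvalues have \(\operatorname{Re}\lambda<-R\Lambda_{1,\varepsilon}<0\), which will handle (ii) for the far spectrum.

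\textbf{Step 2: Rouché and localization.} The target facts come from \eqref{eq:target-critical-spectrum} and \eqref{eq:target-crossing-direction}. For \(\gamma\in[0,\gamma_0]\) and \(\eta\) near \(\eta_*(\gamma)\), the three roots of \(p_\gamma\) are simple. They are \(z_{\pm,\gamma}(\eta)\) near \(\pm i\omega_\gamma\) and a real root near \(-(2+\gamma)\).

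By compactness in \(\gamma\), I will choose \(\rho_*\) and a radius \(r_0>0\) with the following properties:
\begin{itemize}
\item[(a)] \(I^{\mathrm{root}}_\varepsilon\Subset I\), using \eqref{eq:coupling-parameter-interval};
\item[(b)] the disks of radius \(r_0\) around the three roots are disjoint and contained in \(|z|<R/2\), with the third disk in \(\operatorname{Re}z<-1\);
\item[(c)] \(\partial_\eta\operatorname{Re}z_{+,\gamma}\ge c_0>0\) on \(I^{\mathrm{root}}_\varepsilon\), and \(\operatorname{Re}z_{+,\gamma}(\eta_*\pm\rho_*)=\pm s\) for some \(s\) with \(\pm s\gtrless0\) bounded away from zero;
\item[(d)] \(|p_\gamma|\ge m_0>0\) on \(\{|z|\le R\}\setminus\bigcup\text{disks}\).
\end{itemize}

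Taking \(\delta_{\mathrm{adm}}\) with \(C_R\delta_{\mathrm{adm}}<m_0\), Rouché's theorem gives exactly one simple zero of \(f\) in each disk and none elsewhere in \(|z|<R\). The implicit function theorem, using \(\partial_zp_\gamma\ne0\) on the disks and the \(C^1\) closeness from Step 1, makes the root \(z^f_+(\eta)\) a \(C^1\) function of \(\eta\). It also gives \(|\partial_\eta z^f_+-\partial_\eta z_{+,\gamma}|\le C\delta_{\varepsilon,D}\). Hence \(\partial_\eta\operatorname{Re}z^f_+\ge c_0/2\), and \(\operatorname{Re}z^f_+\) changes sign on \(I^{\mathrm{root}}_\varepsilon\).

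So there is a unique \(\eta_{\varepsilon,D}\) with \(\operatorname{Re}z^f_+=0\). Real coefficients give \(f(\bar z,\eta)=\overline{f(z,\eta)}\), so the root in the lower disk is the conjugate. Its imaginary part is close to \(\omega_\gamma>0\), which gives (i), (iii), and simplicity through Lemma~\ref{lem:three-mode-reduction}.

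For (ii), at \(\eta_{\varepsilon,D}\) the third zero has \(\operatorname{Re}z<-1\), there are no other zeros in \(\operatorname{Re}z\ge-R\), and the remaining spectrum lies in \(\operatorname{Re}\lambda<-R\Lambda_{1,\varepsilon}\). Multiplying by \(\Lambda_{1,\varepsilon}>0\) preserves signs.

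\textbf{Main obstacle.} The main difficulty is making every constant (\(\rho_*,r_0,m_0,c_0,C_R\), and hence \(\delta_{\mathrm{adm}}\)) uniform in \(\gamma_\varepsilon\in[0,\gamma_0]\), so that it is independent of \(\varepsilon\) and \(D\). I would handle this by continuity of the roots of \(p_\gamma\) in \((\gamma,\eta)\) on the compact set \([0,\gamma_0]\times I\), together with the explicit formulas \eqref{eq:target-characteristic-polynomial}--\eqref{eq:target-crossing-direction}.
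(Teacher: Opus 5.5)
Your proposal is correct and follows essentially the same route as the paper. Both arguments compare $\det\mathfrak F_{\varepsilon,D}$ with $\det(zI_3-M_{\gamma_\varepsilon}(\eta))$ in $C^1$, apply Rouch\'e on disks chosen uniformly in $\gamma\in[0,\gamma_0]$, exclude $|z|\ge R$ by a perturbation/invertibility argument, obtain implicit-function branches with $C^1$-small displacement and conjugate symmetry, and transfer the conclusions to $\cL_{\varepsilon,D,\eta}$ through Lemma~\ref{lem:three-mode-reduction}. Two bookkeeping points should be made explicit: take $\delta_{\mathrm{adm}}\le\min\{1/4,\pi^2/(2R)\}$ so that the reduction lemmas apply, and record the $O(\delta_{\varepsilon,D})$ root displacement (from the lower bound on the other two factors of $p_\gamma$ in each disk), since that, not the disk radius alone, gives the endpoint sign change of $\operatorname{Re}z^f_+$.
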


\begin{proof}
We assume
\(\delta_{\varepsilon,D}\le\min\{1/4,\pi^2/(2R)\}\) and further restrict
this bound below, always independently of \(\varepsilon,D\).

(i) 
Applying the implicit-function theorem to the simple eigenvalues in
\eqref{eq:target-critical-spectrum} gives jointly \(C^1\) eigenvalue branches
of \(M_\gamma(\eta)\),
\(z_{0,\gamma}\), \(z_{+,\gamma}\), and
\(z_{-,\gamma}=\overline{z_{+,\gamma}}\) near the crossing curve.
Compactness of \([0,\gamma_0]\) gives a uniform \(\rho_*>0\) such that
\([\eta_*(\gamma)-\rho_*,\eta_*(\gamma)+\rho_*]\Subset I\).
On these intervals the roots are simple and uniformly separated.
By \eqref{eq:target-critical-spectrum} and
\eqref{eq:target-crossing-direction}, we may reduce \(\rho_*\) so that,
for some \(c>0\),
\begin{equation}
\Im z_{+,\gamma}(\eta)\ge c,\qquad
\partial_\eta\operatorname{Re} z_{+,\gamma}(\eta)\ge c,\qquad
z_{0,\gamma}(\eta)\le-c.
\label{eq:model-branch-signs}
\end{equation}
By \eqref{eq:spectral-window-radius}, we have
\(|z_{j,\gamma}(\eta)|<R/2\). Choose a uniform \(r>0\) such that the disks
\begin{equation}
\{z\in\C:|z-z_{j,\gamma}(\eta)|\le r\},\qquad j=0,+,-,
\label{eq:reference-root-disks}
\end{equation}
are pairwise disjoint and contained in \(\{|z|<R\}\).
For \(\gamma=\gamma_\varepsilon\), write
\(\mathfrak F_0(z,\eta):=zI_3-M_{\gamma_\varepsilon}(\eta)\).
On the compact set
\[
\Omega_\varepsilon
:=\{(z,\eta)\in\C\times I_\varepsilon^{\mathrm{root}}:|z|\le R\},
\]
the determinant is a polynomial in the matrix entries, so
\eqref{eq:reduced-matrix-approximation} gives
\begin{equation}
\|\det\mathfrak F_{\varepsilon,D}-\det\mathfrak F_0\|_{C^1(\Omega_\varepsilon)}
\le C\delta_{\varepsilon,D}.
\label{eq:determinant-C1-bound}
\end{equation}
Here the norm includes the first derivatives in \(z,\eta\).
On \(|z|\le R\) outside the interiors of the disks
\eqref{eq:reference-root-disks}, compactness gives a uniform positive lower
bound for \(|\det(zI_3-M_\gamma(\eta))|\).
Consequently, for small \(\delta_{\varepsilon,D}\),
\eqref{eq:determinant-C1-bound} implies there, with \(\gamma=\gamma_\varepsilon\),
\begin{equation}
|\det\mathfrak F_{\varepsilon,D}-\det\mathfrak F_0|
<|\det\mathfrak F_0|.
\label{eq:determinant-rouche-bound}
\end{equation}
Both determinants in \eqref{eq:determinant-rouche-bound} are analytic in \(z\).
Rouch\'e's theorem on each boundary circle therefore gives exactly one zero
of \(\det\mathfrak F_{\varepsilon,D}\) in each disk, counted with multiplicity.
These zeros are simple; denote them by \(z_{j,\varepsilon,D}(\eta)\).
Inequality \eqref{eq:determinant-rouche-bound} also excludes zeros in the
rest of \(\{|z|\le R\}\).
For \(\operatorname{Re} z\ge-R\), \(|z|\ge R\), equations
\eqref{eq:spectral-window-radius} and \eqref{eq:reduced-matrix-error-zero} give
\begin{equation}
\begin{aligned}
\|\mathfrak F_0^{-1}\|
&\le\frac1{|z|-\|M_{\gamma_\varepsilon}(\eta)\|}\le\frac2{|z|},\qquad
\|\mathfrak F_0^{-1}(\mathfrak F_{\varepsilon,D}-\mathfrak F_0)\|
\le\frac{8\delta_{\varepsilon,D}}{|z|}\le\tfrac12,
\end{aligned}
\label{eq:large-z-model-resolvent}
\end{equation}
provided \(\delta_{\varepsilon,D}\le R/16\). The factorization
\begin{equation}
\mathfrak F_{\varepsilon,D}
=\mathfrak F_0\left[I_3+\mathfrak F_0^{-1}
(\mathfrak F_{\varepsilon,D}-\mathfrak F_0)\right]
\label{eq:reduced-invertibility-factorization}
\end{equation}
has two invertible factors by \eqref{eq:large-z-model-resolvent}.
Thus the three simple roots account for all zeros in \(\operatorname{Re} z\ge-R\).

Simplicity of the zeros and Lemma~\ref{lem:reduced-matrix-approximation}
allow the implicit-function theorem to give \(C^1\) branches satisfying
\begin{equation}
\det\mathfrak F_{\varepsilon,D}
\bigl(z_{j,\varepsilon,D}(\eta),\eta\bigr)=0,
\qquad j=0,+,-.
\label{eq:reduced-zero-branches}
\end{equation}
Uniqueness in the disks makes these local branches agree on overlaps.
To estimate their displacement, factor the reference polynomial as
\[
\det\mathfrak F_0(z,\eta)
=\prod_{k\in\{0,+,-\}}(z-z_{k,\gamma_\varepsilon}(\eta)).
\]
In the \(j\)-th disk, the other two factors have a uniform positive lower
bound. Hence \eqref{eq:determinant-C1-bound} and
\eqref{eq:reduced-zero-branches} yield
\begin{equation}
|z_{j,\varepsilon,D}-z_{j,\gamma_\varepsilon}|
\le C|\det\mathfrak F_0(z_{j,\varepsilon,D},\eta)|
\le C\delta_{\varepsilon,D}.
\label{eq:root-displacement}
\end{equation}
At a reference root, \(|\partial_z\det\mathfrak F_0|\) is uniformly
positive, since it is the product of the two root separations.
By \eqref{eq:determinant-C1-bound} and \eqref{eq:root-displacement},
\(|\partial_z\det\mathfrak F_{\varepsilon,D}|\) is also uniformly
bounded away from zero at the perturbed roots. Differentiating
\eqref{eq:reduced-zero-branches} gives
\begin{equation}
z_{j,\varepsilon,D}'(\eta)
=-\left.
\frac{\partial_\eta\det\mathfrak F_{\varepsilon,D}(z,\eta)}
{\partial_z\det\mathfrak F_{\varepsilon,D}(z,\eta)}
\right|_{z=z_{j,\varepsilon,D}(\eta)}.
\label{eq:reduced-root-derivative}
\end{equation}
The reference roots satisfy \eqref{eq:reduced-root-derivative} with
\(\mathfrak F_{\varepsilon,D}\) replaced by \(\mathfrak F_0\).
Its determinant is polynomial in \(z,\eta\), with uniformly bounded
derivatives on the region under consideration. Comparing the two quotients
using \eqref{eq:determinant-C1-bound} and \eqref{eq:root-displacement} gives
\begin{equation}
\|z_{j,\varepsilon,D}-z_{j,\gamma_\varepsilon}\|_
{C^1(I_\varepsilon^{\mathrm{root}})}
\le C_{\mathrm{rt}}\delta_{\varepsilon,D},\qquad j=0,+,-,
\label{eq:root-C1}
\end{equation}
where \(C_{\mathrm{rt}}>0\) is independent of \(\varepsilon,D\).
The coefficients are real, so uniqueness in the disks also gives
\begin{equation}
z_{0,\varepsilon,D}(\eta)\in\mathbb R
\quad\text{and}\quad
z_{-,\varepsilon,D}(\eta)=\overline{z_{+,\varepsilon,D}(\eta)}.
\label{eq:reduced-zero-conjugacy}
\end{equation}
Since \(\operatorname{Re} z_{+,\gamma}(\eta_*(\gamma))=0\), integration of the second
inequality in \eqref{eq:model-branch-signs} gives
\begin{equation}
\pm\operatorname{Re} z_{+,\gamma}(\eta_*(\gamma)\pm\rho_*)
=\int_0^{\rho_*}\partial_\eta\operatorname{Re} z_{+,\gamma}
(\eta_*(\gamma)\pm s)\,\dd s\ge c\rho_*.
\label{eq:model-endpoint-signs}
\end{equation}
Choose \(C_{\mathrm{rt}}\delta_{\varepsilon,D}
<c\min\{1,\rho_*\}/2\). Equations \eqref{eq:model-branch-signs} and
\eqref{eq:root-C1} then imply
\begin{equation}
\Im z_{+,\varepsilon,D}(\eta)>0,\qquad
\partial_\eta\operatorname{Re} z_{+,\varepsilon,D}(\eta)>0,\qquad
z_{0,\varepsilon,D}(\eta)<0,
\label{eq:perturbed-branch-signs}
\end{equation}
while \eqref{eq:model-endpoint-signs} gives
\begin{equation}
\operatorname{Re} z_{+,\varepsilon,D}
\bigl(\eta_*(\gamma_\varepsilon)-\rho_*\bigr)<0
<\operatorname{Re} z_{+,\varepsilon,D}
\bigl(\eta_*(\gamma_\varepsilon)+\rho_*\bigr).
\label{eq:perturbed-endpoint-signs}
\end{equation}
Thus \(\operatorname{Re} z_{+,\varepsilon,D}\) is strictly increasing, with opposite
signs at the endpoints. It has exactly one zero
\(\eta_{\varepsilon,D}\) in the interior of \(I_\varepsilon^{\mathrm{root}}\).
At this point, \eqref{eq:root-C1} gives
\(|\operatorname{Re} z_{+,\gamma_\varepsilon}(\eta_{\varepsilon,D})|
\le C_{\mathrm{rt}}\delta_{\varepsilon,D}\).
The mean-value theorem and \eqref{eq:model-branch-signs} therefore yield
\begin{equation}
|\eta_{\varepsilon,D}-\eta_*(\gamma_\varepsilon)|
\le\frac{C_{\mathrm{rt}}}{c}\delta_{\varepsilon,D}.
\label{eq:true-threshold}
\end{equation}

Choose \(\delta_{\mathrm{adm}}>0\) to meet all the restrictions above;
they depend only on the fixed matrix family and its chosen neighborhoods.
The operator \(\cL_{\varepsilon,D,\eta}\) has compact resolvent.
Put \(\lambda_{\varepsilon,D}^j(\eta):=
\Lambda_{1,\varepsilon}z_{j,\varepsilon,D}(\eta)\).
By Lemma~\ref{lem:three-mode-reduction}, the three simple zeros located using
\eqref{eq:determinant-rouche-bound}--\eqref{eq:reduced-invertibility-factorization}
give, for every \(\eta\in I_\varepsilon^{\mathrm{root}}\),
\begin{equation}
\sigma(\cL_{\varepsilon,D,\eta})
\cap\{\lambda\in\C:\operatorname{Re}\lambda\ge-R\Lambda_{1,\varepsilon}\}
=\{\lambda_{\varepsilon,D}^j(\eta):j=0,+,-\},
\label{eq:critical-spectrum-localization}
\end{equation}
with all three eigenvalues algebraically simple.
Define
\(\omega_{\varepsilon,D}:=
\Lambda_{1,\varepsilon}\Im z_{+,\varepsilon,D}(\eta_{\varepsilon,D})\).
By \eqref{eq:reduced-zero-conjugacy} and the first inequality in
\eqref{eq:perturbed-branch-signs}, we have
\begin{equation}
\lambda_{\varepsilon,D}^{\pm}(\eta_{\varepsilon,D})
=\pm i\omega_{\varepsilon,D},
\qquad \omega_{\varepsilon,D}>0.
\label{eq:true-pair}
\end{equation}
By \eqref{eq:critical-spectrum-localization} and
\eqref{eq:perturbed-branch-signs}, a purely imaginary eigenvalue can occur
only at the unique zero of \(\operatorname{Re}z_{+,\varepsilon,D}\).
This proves (i).

{(ii)}
The third inequality in \eqref{eq:perturbed-branch-signs} gives
\(\lambda_{\varepsilon,D}^0<0\).
By \eqref{eq:critical-spectrum-localization}, all eigenvalues other than
\(\lambda_{\varepsilon,D}^{0},\lambda_{\varepsilon,D}^{+},
\lambda_{\varepsilon,D}^{-}\) satisfy
\(\operatorname{Re}\lambda<-R\Lambda_{1,\varepsilon}<0\).
In particular, (ii) holds at \(\eta=\eta_{\varepsilon,D}\).

{(iii) }
Since \(\Lambda_{1,\varepsilon}>0\) is independent of \(\eta\), the second inequality
in \eqref{eq:perturbed-branch-signs} gives
\begin{equation}
\left.\partial_\eta\operatorname{Re}\lambda_{\varepsilon,D}^{+}(\eta)
\right|_{\eta=\eta_{\varepsilon,D}}
=\Lambda_{1,\varepsilon}
\left.\partial_\eta\operatorname{Re} z_{+,\varepsilon,D}(\eta)
\right|_{\eta=\eta_{\varepsilon,D}}>0.
\label{eq:true-crossing-positive}
\end{equation}
This proves (iii).
\end{proof}

\begin{proof}[Proof of Theorem~\ref{thm:main}]
Take the coefficient families and unique equilibria from
Lemma~\ref{lem:prey-coefficients}. By \eqref{eq:scale-separation} and
\eqref{eq:profile-bound}, we have
\[
\frac{\Lambda_{1,\varepsilon} K_\varepsilon^2}{\Lambda_{2,\varepsilon}}
\le\frac{8C_1C_{\mathrm{pr}}^2}{\nu_2-\nu_1}
\varepsilon^{-5}e^{-\kappa_1/\varepsilon}\longrightarrow0.
\]
Choose \(0<\varepsilon_0\le\varepsilon_{\mathrm{red}}\), below the thresholds
in Lemmas~\ref{lem:half-local-spectrum}--\ref{lem:prey-coefficients}, such that
\begin{equation}
\frac{\Lambda_{1,\varepsilon} K_\varepsilon^2}{\Lambda_{2,\varepsilon}}
<\frac{\delta_{\mathrm{adm}}}{2},\qquad 0<\varepsilon<\varepsilon_0.
\label{eq:first-smallness-threshold}
\end{equation}
Set
\begin{equation}
D_0(\varepsilon)
:=\frac{2\Lambda_{1,\varepsilon} K_\varepsilon^2}{\delta_{\mathrm{adm}}}.
\label{eq:predator-diffusion-threshold}
\end{equation}
For \(D>D_0(\varepsilon)\), we obtain
\[
\delta_{\varepsilon,D}
=\frac{\Lambda_{1,\varepsilon} K_\varepsilon^2}{\Lambda_{2,\varepsilon}}
+\frac{\Lambda_{1,\varepsilon} K_\varepsilon^2}{D}
<\frac{\delta_{\mathrm{adm}}}{2}
+\frac{\delta_{\mathrm{adm}}}{2}
=\delta_{\mathrm{adm}}.
\]
Hence Lemma~\ref{lem:critical-spectrum} gives the crossing
\(\eta_{\varepsilon,D}\). Moreover,
\begin{equation}
0<D_0(\varepsilon)
\le\frac{4C_1C_{\mathrm{pr}}^2}{\delta_{\mathrm{adm}}}
\varepsilon^{-4}e^{-\kappa_1/\varepsilon}\longrightarrow0.
\label{eq:D0-decay}
\end{equation}

The spectral bound
\(\spb(L):=\sup_{\lambda\in\sigma(L)}\operatorname{Re}\lambda\)
is the supremum of the real parts of the spectral values of \(L\).
Choose \(\rho_{\mathrm H}>0\) with
\[
[\eta_{\varepsilon,D}-\rho_{\mathrm H},
\eta_{\varepsilon,D}+\rho_{\mathrm H}]
\Subset I_\varepsilon^{\mathrm{root}}.
\]
Equations \eqref{eq:critical-spectrum-localization},
\eqref{eq:perturbed-branch-signs}, and \eqref{eq:true-pair} then give
\begin{subequations}
\label{eq:full-stability-switch}
\begin{align}
&\spb(\cL_{\varepsilon,D,\eta})<0
\quad\text{if }\eta_{\varepsilon,D}-\rho_{\mathrm H}
<\eta<\eta_{\varepsilon,D},
\label{eq:full-stability-before}\\
&\spb(\cL_{\varepsilon,D,\eta})>0
\quad\text{if }\eta_{\varepsilon,D}<\eta
<\eta_{\varepsilon,D}+\rho_{\mathrm H}.
\label{eq:full-stability-after}
\end{align}
\end{subequations}

To complete the proof of Theorem~\ref{thm:main}, we work on
\[
X:=C([0,1];\R)^2,\qquad
\operatorname{Dom}(\cL_{\varepsilon,D,\eta})
=C_N^2([0,1];\R)^2,
\]
where
\[
C_N^2([0,1];\R)
:=\{w\in C^2([0,1];\R):w'(0)=w'(1)=0\}.
\]
The negative diagonal diffusion operator is sectorial, and the reaction
part is bounded on \(X\). By
\cite[Theorems~1.3.2 and~1.3.4]{Henry1981},
\(\cL_{\varepsilon,D,\eta}\) generates an analytic semigroup.
The same bounded-perturbation argument gives compact resolvent on \(X\).
By elliptic regularity for generalized eigenvectors,
the realizations of \(\cL_{\varepsilon,D,\eta}\) on \(C([0,1])^2\)
and \(L^2((0,1))^2\) have the same spectrum, including algebraic multiplicities.

After translating the equilibrium to zero, the remainder is
\begin{equation}
\mathcal N_\eta(\xi,\psi):=
\begin{pmatrix}
-a_\varepsilon\xi^2-b_\varepsilon\xi\psi\\
c_{\varepsilon,\eta}\xi\psi-e_\varepsilon\psi^2
\end{pmatrix}.
\label{eq:Hopf-quadratic-remainder}
\end{equation}
Bounded pointwise multiplication on \(C([0,1])\) and the smooth coefficients give
\[
\bigl[(\eta,w)\mapsto\mathcal N_\eta(w)\bigr]\in C^\infty(I\times X,X),\qquad
\mathcal N_\eta(0)=0,\qquad D_w\mathcal N_\eta(0)=0.
\]
To fix the linear part at the crossing, put
\begin{equation}
\mu:=\eta-\eta_{\varepsilon,D},\qquad
s:=\omega_{\varepsilon,D}t,\qquad w:=(\xi,\psi).
\label{eq:Hopf-parameter-time-rescaling}
\end{equation}
The translated equation becomes
\[
\frac{\dd w}{\dd s}
=\frac1{\omega_{\varepsilon,D}}
\cL_{\varepsilon,D,\eta_{\varepsilon,D}}w+\mathcal G(\mu,w),
\]
where
\[
\mathcal G(\mu,w):=\frac1{\omega_{\varepsilon,D}}
\left[
(\cL_{\varepsilon,D,\eta_{\varepsilon,D}+\mu}
-\cL_{\varepsilon,D,\eta_{\varepsilon,D}})w
+\mathcal N_{\eta_{\varepsilon,D}+\mu}(w)
\right].
\]
The operator difference is bounded on \(X\) and smooth in \(\mu\); hence
\[
\mathcal G\in C^\infty,\qquad
\mathcal G(\mu,0)=0,\qquad D_w\mathcal G(0,0)=0.
\]
By Lemma~\ref{lem:critical-spectrum}, we also have
\[
\sigma\left(
\omega_{\varepsilon,D}^{-1}
\cL_{\varepsilon,D,\eta_{\varepsilon,D}}\right)\cap i\R
=\{-i,i\},
\qquad
\left.\partial_\mu\operatorname{Re}
\frac{\lambda_{\varepsilon,D}^+(\eta_{\varepsilon,D}+\mu)}
{\omega_{\varepsilon,D}}\right|_{\mu=0}>0,
\]
with both imaginary eigenvalues algebraically simple.
Thus \cite[Theorem~1.11]{CrandallRabinowitzHopf} applies with
fractional-domain exponent \(\alpha=0\), giving nonstationary time-periodic
solutions near the equilibrium. They remain strictly positive because
\[
\min\{\min_{x\in[0,1]}\phi_{1,\varepsilon}(x),V_\varepsilon\}>0.
\]
Parabolic regularity and periodicity make these solutions classical.

Restore the family label \(\tau\in\{+,-\}\), writing the crossing as
\(\eta_{\varepsilon,D}^\tau\), and normalize \(I=[\eta_{\mathrm L},\eta_{\mathrm R}]\):
\begin{equation}
\Delta_I:=\eta_{\mathrm R}-\eta_{\mathrm L}>0,\qquad
\widehat\eta:=\frac{\eta-\eta_{\mathrm L}}{\Delta_I}\in[0,1].
\label{eq:parameter-normalization}
\end{equation}
For every parameter-dependent coefficient, set
\(\widehat f_{\widehat\eta}:=f_{\eta_{\mathrm L}+\Delta_I\widehat\eta}\).
The normalized crossing and its direction are
\[
\eta_{\mathrm H}^{\tau,\varepsilon,D}
:=\frac{\eta_{\varepsilon,D}^\tau-\eta_{\mathrm L}}{\Delta_I}\in(0,1),
\qquad
\partial_{\widehat\eta}\operatorname{Re}\lambda^+
=\Delta_I\partial_\eta\operatorname{Re}\lambda^+>0
\quad\text{at the crossing}.
\]
Renaming \(\widehat\eta\) as \(\eta\) proves the theorem for both families.
\end{proof}

\section{Proof for Spatially Concentrated Predator}
\label{sec:concentrated-predator}

In this section, we prove Theorem~\ref{thm:mirror-main} by exchanging the two components of
the adjoint of the linearization \eqref{eq:target-linearization}.
Set \(D=\varepsilon^2\).
Here \(\eta\) again ranges over the interval \(I\) in
\eqref{eq:coupling-parameter-interval}, with normalization to \([0,1]\)
postponed to the end. We retain the scalar spectral data of
Section~\ref{sec:preparations} and the coupling functions of
Lemma~\ref{lem:positive-profiles}. We reuse the coefficient notation from
Section~\ref{sec:concentrated-prey} for the new family.

\Needspace{5\baselineskip}
\begin{lemma}[Coefficient family with spatially concentrated predator]
\label{lem:mirror-coefficients}
For all sufficiently small \(\varepsilon\), there exist \(U_\varepsilon>0\)
and smooth coefficients, all independent of \(d\), such that
\[
r_\varepsilon,a_\varepsilon,b_\varepsilon,c_{\varepsilon,\eta},e_\varepsilon>0
\qquad m_{\varepsilon,\eta}<0
\quad\text{on }[0,1]\times I.
\]
For every \(d>0\) and \(\eta\in I\), the unique positive equilibrium is
\begin{equation}
(u_{\varepsilon,\eta}^*,v_{\varepsilon,\eta}^*)
=(U_\varepsilon,\phi_{1,\varepsilon}).
\label{eq:mirror-equilibrium}
\end{equation}
\end{lemma}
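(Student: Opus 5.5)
The plan mirrors the proof of Lemma~\ref{lem:prey-coefficients}, with the roles of the species exchanged: the predator now carries the eigenfunction \(\phi_{1,\varepsilon}\) and the prey is constant. We prescribe \((u_*,v_*)=(U_\varepsilon,\phi_{1,\varepsilon})\) with \(D=\varepsilon^2\) and aim for the target linearization obtained from \eqref{eq:target-linearization} by taking the adjoint and swapping components:
\[
\begin{pmatrix}
d\partial_{xx}-\gamma_\varepsilon\Lambda_{1,\varepsilon}\mathrm{Id}&-\Lambda_{1,\varepsilon}k_{b,\varepsilon}\\
\Lambda_{1,\varepsilon}k_{c,\varepsilon,\eta}&A_\varepsilon-\dfrac{\Lambda_{1,\varepsilon}}2\mathrm{Id}
\end{pmatrix}.
\]
The adjoint of \eqref{eq:target-linearization} has off-diagonal entries \(\Lambda_{1,\varepsilon}k_{c,\varepsilon,\eta}\) (upper right) and \(-\Lambda_{1,\varepsilon}k_{b,\varepsilon}\) (lower left); swapping the two components transposes these positions. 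The signs are therefore exactly those required for predator--prey interactions. Here we fix some \(0<\gamma_\varepsilon<\gamma_0\).

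First, write the equilibrium identities. Since \(U_\varepsilon\) is constant, the prey equation reduces to
\[
r_\varepsilon-a_\varepsilon U_\varepsilon-b_\varepsilon\phi_{1,\varepsilon}=0.
\]
Because \(\varepsilon^2\phi_{1,\varepsilon}''=(W-\lambda_{1,\varepsilon})\phi_{1,\varepsilon}\), the predator equation is equivalent to
\[
m_{\varepsilon,\eta}+c_{\varepsilon,\eta}U_\varepsilon-e_\varepsilon\phi_{1,\varepsilon}=\lambda_{1,\varepsilon}-W.
\]
Substituting these into \eqref{eq:linearized-operator} gives diagonal blocks \(d\partial_{xx}-a_\varepsilon U_\varepsilon\) and \(A_\varepsilon-e_\varepsilon\phi_{1,\varepsilon}\), with off-diagonal entries \(-b_\varepsilon U_\varepsilon\) and \(c_{\varepsilon,\eta}\phi_{1,\varepsilon}\). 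Matching with the target forces
\[
a_\varepsilon=\frac{\gamma_\varepsilon\Lambda_{1,\varepsilon}}{U_\varepsilon},\quad
b_\varepsilon=\frac{\Lambda_{1,\varepsilon}k_{b,\varepsilon}}{U_\varepsilon},\quad
c_{\varepsilon,\eta}=\frac{\Lambda_{1,\varepsilon}k_{c,\varepsilon,\eta}}{\phi_{1,\varepsilon}},\quad
e_\varepsilon=\frac{\Lambda_{1,\varepsilon}}{2\phi_{1,\varepsilon}}.
\]
The equilibrium identities then determine the remaining two coefficients:
\[
r_\varepsilon=\gamma_\varepsilon\Lambda_{1,\varepsilon}+\frac{\Lambda_{1,\varepsilon}k_{b,\varepsilon}\phi_{1,\varepsilon}}{U_\varepsilon},\qquad
m_{\varepsilon,\eta}=\lambda_{1,\varepsilon}-W+\frac{\Lambda_{1,\varepsilon}}2-\frac{\Lambda_{1,\varepsilon}U_\varepsilon k_{c,\varepsilon,\eta}}{\phi_{1,\varepsilon}}.
\]
None of these formulas involves \(d\). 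All of them are smooth in \((x,\eta)\), because \(\phi_{1,\varepsilon}>0\) is smooth and, by Lemma~\ref{lem:positive-profiles}, \(k_{b,\varepsilon}\) and \(k_{c,\varepsilon,\eta}\) are smooth, positive, and affine in \(\eta\).

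Second, check the signs. Positivity of \(a,b,c,e\) follows from \(\Lambda_{1,\varepsilon}>0\) (Lemma~\ref{lem:symmetric-double-well}) together with \(k_{b,\varepsilon},k_{c,\varepsilon,\eta}\ge\delta_0\). The coefficient \(r_\varepsilon\) is automatically positive, which is easier than in the prey case. The only genuine constraint is \(m_{\varepsilon,\eta}<0\). This is the analogue of \eqref{eq:prey-growth-threshold}, and I expect it to be the main, though mild, obstacle. Define
\[
\Theta^m_\varepsilon:=\sup_{(x,\eta)\in[0,1]\times I}\frac{(\lambda_{1,\varepsilon}+\Lambda_{1,\varepsilon}/2-W(x))_+\,\phi_{1,\varepsilon}(x)}{\Lambda_{1,\varepsilon}k_{c,\varepsilon,\eta}(x)}.
\]
This is finite by compactness and strict positivity. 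Choosing \(U_\varepsilon>1+\Theta^m_\varepsilon\) gives
\[
m_{\varepsilon,\eta}\le-(U_\varepsilon-\Theta^m_\varepsilon)\frac{\Lambda_{1,\varepsilon}k_{c,\varepsilon,\eta}}{\phi_{1,\varepsilon}}<0
\]
on \([0,1]\times I\). No constraint on \(\gamma_\varepsilon\) is needed beyond \(0<\gamma_\varepsilon<\gamma_0\).

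Finally, the balance identities show that \((U_\varepsilon,\phi_{1,\varepsilon})\) is a positive equilibrium for every \(d>0\), \(\eta\in I\), and \(D=\varepsilon^2\). Under \eqref{eq:positive-coefficients}, \cite[Theorem~4.1]{LopezGomezPardo1998} then shows that it is the unique positive equilibrium, which is \eqref{eq:mirror-equilibrium}. As a by-product, the linearization equals the swapped adjoint displayed above, which is what the subsequent spectral transfer will use.
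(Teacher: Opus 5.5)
Your proposal is correct and follows the paper's proof essentially verbatim. It uses the same prescribed equilibrium, the same balance identities, the same six coefficient formulas, the same amplitude condition $U_\varepsilon>1+\Theta^m_\varepsilon$ (which is exactly \eqref{eq:mirror-amplitude-choice}), and the same appeal to \cite[Theorem~4.1]{LopezGomezPardo1998} for uniqueness. The only cosmetic difference is that the paper fixes one $\gamma_\star\in(0,\gamma_0)$ independent of $\varepsilon$, whereas you allow any $\gamma_\varepsilon\in(0,\gamma_0)$; this is harmless because the later spectral estimates are uniform in $\gamma\in[0,\gamma_0]$.
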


\begin{proof}
We prescribe \((u_*,v_*)=(U_\varepsilon,\phi_{1,\varepsilon})\). The prey
reaction bracket must vanish, while the predator reaction term must balance
\(\varepsilon^2\phi_{1,\varepsilon}''\). By the eigenfunction equation, the
balance identities are
\begin{equation}
r_\varepsilon-a_\varepsilon U_\varepsilon-b_\varepsilon\phi_{1,\varepsilon}=0,
\qquad
m_{\varepsilon,\eta}+c_{\varepsilon,\eta}U_\varepsilon
-e_\varepsilon\phi_{1,\varepsilon}=\lambda_{1,\varepsilon}-W.
\label{eq:mirror-balance-identities}
\end{equation}
Choose \(U_\varepsilon\) so large that
\begin{equation}
U_\varepsilon>1+
\max_{(x,\eta)\in[0,1]\times I}
\frac{\phi_{1,\varepsilon}(x)
\bigl(\lambda_{1,\varepsilon}-W(x)+\Lambda_{1,\varepsilon}/2\bigr)_+}
{\Lambda_{1,\varepsilon} k_{c,\varepsilon,\eta}(x)}.
\label{eq:mirror-amplitude-choice}
\end{equation}
Such a choice is possible by compactness and strict positivity of the denominator. Fix \(\gamma_\star\in(0,\gamma_0)\).
Define the six coefficients by
\begin{subequations}
\label{eq:mirror-coefficients}
\begin{align}
a_\varepsilon(x)&:=\frac{\gamma_\star \Lambda_{1,\varepsilon}}{U_\varepsilon},
\qquad
b_\varepsilon(x):=\frac{\Lambda_{1,\varepsilon} k_{b,\varepsilon}(x)}
{U_\varepsilon},
\label{eq:mirror-coefficients-ab}\\
e_\varepsilon(x)&:=\frac{\Lambda_{1,\varepsilon}}{2\phi_{1,\varepsilon}(x)},
\qquad
c_{\varepsilon,\eta}(x):=\frac{\Lambda_{1,\varepsilon}
k_{c,\varepsilon,\eta}(x)}{\phi_{1,\varepsilon}(x)},
\label{eq:mirror-coefficients-ec}\\
r_\varepsilon(x)
&:=\gamma_\star \Lambda_{1,\varepsilon}
+\frac{\Lambda_{1,\varepsilon}}{U_\varepsilon}
k_{b,\varepsilon}(x)\phi_{1,\varepsilon}(x),
\label{eq:mirror-coefficients-r}\\
m_{\varepsilon,\eta}(x)
&:=\lambda_{1,\varepsilon}-W(x)
+\frac{\Lambda_{1,\varepsilon}}{2}
-\frac{\Lambda_{1,\varepsilon} U_\varepsilon}{\phi_{1,\varepsilon}(x)}
k_{c,\varepsilon,\eta}(x).
\label{eq:mirror-coefficients-m}
\end{align}
\end{subequations}
Formulas \eqref{eq:mirror-coefficients-ab}--\eqref{eq:mirror-coefficients-ec}
prescribe the four entries of the linearized operator.
Formulas \eqref{eq:mirror-coefficients-r}--\eqref{eq:mirror-coefficients-m}
solve the two identities in \eqref{eq:mirror-balance-identities}, respectively.
By \eqref{eq:mirror-amplitude-choice}, we have
\[
\frac{\Lambda_{1,\varepsilon} U_\varepsilon}{\phi_{1,\varepsilon}(x)}
k_{c,\varepsilon,\eta}(x)
>
\bigl(\lambda_{1,\varepsilon}-W(x)+\Lambda_{1,\varepsilon}/2\bigr)_+
+\frac{\Lambda_{1,\varepsilon} k_{c,\varepsilon,\eta}(x)}
{\phi_{1,\varepsilon}(x)}.
\]
Consequently, the coefficients have the signs
\begin{subequations}
\label{eq:mirror-signs}
\begin{align}
&r_\varepsilon(x)>0,
\label{eq:mirror-sign-r}\\
&m_{\varepsilon,\eta}(x)
< -\frac{\Lambda_{1,\varepsilon} k_{c,\varepsilon,\eta}(x)}
{\phi_{1,\varepsilon}(x)}<0.
\label{eq:mirror-sign-m}
\end{align}
\end{subequations}
The inequalities \eqref{eq:mirror-sign-r} and \eqref{eq:mirror-sign-m}
hold for \((x,\eta)\in[0,1]\times I\).
Substituting \eqref{eq:mirror-coefficients} into
\eqref{eq:mirror-balance-identities} shows that
\eqref{eq:mirror-equilibrium} is an equilibrium. It is unique by
\cite[Theorem~4.1]{LopezGomezPardo1998}.
Direct differentiation gives
\begin{equation}
\cL^{\mathrm{mir}}_{\varepsilon,d,\eta}
=
\begin{pmatrix}
d\partial_{xx}-\gamma_\star \Lambda_{1,\varepsilon}\mathrm{Id}
&-\Lambda_{1,\varepsilon} k_{b,\varepsilon}\\
\Lambda_{1,\varepsilon} k_{c,\varepsilon,\eta}
&A_\varepsilon-\Lambda_{1,\varepsilon}\mathrm{Id}/2
\end{pmatrix}.\qedhere
\label{eq:mirror-linearization}
\end{equation}
\end{proof}

\begin{proof}[Proof of Theorem~\ref{thm:mirror-main}]
Use \(\rho_*,\delta_{\mathrm{adm}}\) from
Lemma~\ref{lem:critical-spectrum} and the coefficient family from
Lemma~\ref{lem:mirror-coefficients}. Choose \(\varepsilon_0\) as in
\eqref{eq:first-smallness-threshold}, small enough for
Lemma~\ref{lem:mirror-coefficients}. Set
\[
d_0(\varepsilon):=D_0(\varepsilon)
=\frac{2\Lambda_{1,\varepsilon} K_\varepsilon^2}{\delta_{\mathrm{adm}}}
\longrightarrow0
\]
by \eqref{eq:predator-diffusion-threshold} and \eqref{eq:D0-decay}.
For \(0<\varepsilon<\varepsilon_0\) and \(d>d_0(\varepsilon)\), we have
\[
\delta_{\varepsilon,d}
=\Lambda_{1,\varepsilon} K_\varepsilon^2
\left(\frac1{\Lambda_{2,\varepsilon}}+\frac1d\right)
<\frac{\delta_{\mathrm{adm}}}{2}
+\frac{\delta_{\mathrm{adm}}}{2}
=\delta_{\mathrm{adm}}.
\]

On \(L^2((0,1))^2\), with Neumann domain \(H_N^2((0,1))^2\), define
\begin{equation}
\cL^{\mathrm{aux}}_{\varepsilon,d,\eta}
:=
\begin{pmatrix}
A_\varepsilon-\Lambda_{1,\varepsilon}\mathrm{Id}/2
&-\Lambda_{1,\varepsilon} k_{b,\varepsilon}\\
\Lambda_{1,\varepsilon} k_{c,\varepsilon,\eta}
&d\partial_{xx}-\gamma_\star \Lambda_{1,\varepsilon}\mathrm{Id}
\end{pmatrix}.
\label{eq:mirror-auxiliary}
\end{equation}
For \(\mathscr S(\xi,\psi):=(\psi,\xi)\), the real coefficients and
self-adjoint diagonal blocks give
\begin{equation}
\mathscr S^{-1}=\mathscr S,\qquad
\cL^{\mathrm{mir}}_{\varepsilon,d,\eta}
=\mathscr S
\bigl(\cL^{\mathrm{aux}}_{\varepsilon,d,\eta}\bigr)^*
\mathscr S^{-1}.
\label{eq:adjoint-swap}
\end{equation}
The operators in \eqref{eq:mirror-linearization} and
\eqref{eq:mirror-auxiliary} have compact resolvent; their coefficients are real.
Hence
\begin{equation}
\begin{aligned}
\sigma\bigl(\cL^{\mathrm{mir}}_{\varepsilon,d,\eta}\bigr)
&=\left\{\overline{\lambda}:\lambda\in\sigma
\bigl(\cL^{\mathrm{aux}}_{\varepsilon,d,\eta}\bigr)\right\}=\sigma\bigl(\cL^{\mathrm{aux}}_{\varepsilon,d,\eta}\bigr),
\end{aligned}
\label{eq:mirror-spectral-equivalence}
\end{equation}
with algebraic multiplicities preserved.

The auxiliary operator is \eqref{eq:target-linearization} with
\(D=d\), \(\gamma_\varepsilon=\gamma_\star\). The estimates in
Section~\ref{subsec:prey-reduction} and the proof of
Lemma~\ref{lem:critical-spectrum} are uniform for
\(0\le\gamma\le\gamma_0\), so they apply on
\[
I_{\mathrm{mir}}^{\mathrm{root}}
:=[\eta_*(\gamma_\star)-\rho_*,
\eta_*(\gamma_\star)+\rho_*]\Subset I.
\]
\Needspace{9\baselineskip}
Together with \eqref{eq:mirror-spectral-equivalence}, they give a unique
parameter \(\eta_{\varepsilon,d}^{\mathrm{mir}}\) in the interior of
\(I_{\mathrm{mir}}^{\mathrm{root}}\) at which the algebraically simple
conjugate pair satisfies
\begin{equation}
\lambda_{\varepsilon,d}^{\mathrm{mir},\pm}
(\eta_{\varepsilon,d}^{\mathrm{mir}})
=\pm i\omega_{\varepsilon,d}^{\mathrm{mir}},
\qquad \omega_{\varepsilon,d}^{\mathrm{mir}}>0,
\label{eq:mirror-true-pair}
\end{equation}
and
\begin{equation}
\left.
\partial_\eta\operatorname{Re}\lambda_{\varepsilon,d}^{\mathrm{mir},+}(\eta)
\right|_{\eta=\eta_{\varepsilon,d}^{\mathrm{mir}}}>0.
\label{eq:mirror-true-crossing-positive}
\end{equation}
By \eqref{eq:critical-spectrum-localization} and the third inequality in
\eqref{eq:perturbed-branch-signs}, the remaining spectrum satisfies
\begin{equation}
\sigma\bigl(\cL^{\mathrm{mir}}_{\varepsilon,d,\eta}\bigr)
\setminus
\{\lambda_{\varepsilon,d}^{\mathrm{mir},+}(\eta),
\lambda_{\varepsilon,d}^{\mathrm{mir},-}(\eta)\}
\subset\{z\in\C:\operatorname{Re} z<0\},
\qquad \eta\in I_{\mathrm{mir}}^{\mathrm{root}}.
\label{eq:mirror-remaining-spectrum}
\end{equation}
The second inequality in \eqref{eq:perturbed-branch-signs} then gives,
for \(\eta\in I_{\mathrm{mir}}^{\mathrm{root}}\),
\[
\begin{cases}
\spb(\cL^{\mathrm{mir}}_{\varepsilon,d,\eta})<0,
&\eta<\eta_{\varepsilon,d}^{\mathrm{mir}},\\
\spb(\cL^{\mathrm{mir}}_{\varepsilon,d,\eta})>0,
&\eta>\eta_{\varepsilon,d}^{\mathrm{mir}}.
\end{cases}
\]

After translating \eqref{eq:mirror-equilibrium} to zero, denote the remainder
\eqref{eq:Hopf-quadratic-remainder}, with coefficients
\eqref{eq:mirror-coefficients}, by \(\mathcal N_\eta\). It satisfies
\[
\mathcal N_\eta(0)=0,\qquad D_w\mathcal N_\eta(0)=0,\qquad
\mathcal N_\eta\in C^\infty\bigl(C([0,1];\R)^2,C([0,1];\R)^2\bigr).
\]
The parameter smoothness, analytic-semigroup hypotheses, and spectral
realizations are as in the proof of Theorem~\ref{thm:main}.
Use the rescaling \eqref{eq:Hopf-parameter-time-rescaling} with
\(\eta_{\varepsilon,D},\omega_{\varepsilon,D}\) replaced by
\(\eta_{\varepsilon,d}^{\mathrm{mir}},\omega_{\varepsilon,d}^{\mathrm{mir}}\), respectively.
Then \eqref{eq:mirror-true-pair}--\eqref{eq:mirror-remaining-spectrum}
verify the hypotheses of
\cite[Theorem~1.11]{CrandallRabinowitzHopf}. This gives a local branch
of nonstationary classical solutions periodic in time.
These solutions are strictly positive, since they converge uniformly
to \((U_\varepsilon,\phi_{1,\varepsilon})\) and
\[
\min\{U_\varepsilon,\min_{x\in[0,1]}\phi_{1,\varepsilon}(x)\}>0.
\]
\Needspace{6\baselineskip}
Finally, \eqref{eq:parameter-normalization} gives
\[
\widehat\eta_{\mathrm H}
=\frac{\eta_{\varepsilon,d}^{\mathrm{mir}}-\eta_{\mathrm L}}{\Delta_I}
\in(0,1),\qquad
\partial_{\widehat\eta}\operatorname{Re}\lambda_{\varepsilon,d}^{\mathrm{mir},+}
=\Delta_I\partial_\eta\operatorname{Re}\lambda_{\varepsilon,d}^{\mathrm{mir},+},
\]
so the normalization preserves the crossing direction.
\end{proof}

\section*{Acknowledgments}

We acknowledge the assistance of AI tools in identifying references that helped us learn semiclassical analysis techniques used in the spectral reduction of the non-self-adjoint linearized operator. These tools also assisted with some calculations, English-language editing, and the presentation of some mathematical arguments during the preparation of this manuscript.

Y.~Lou was partially supported by the Interdisciplinary Disciplines
Breakthrough Plan of the Ministry of Education of China
(JYB2025XDXM902) and  NSF of China (12631017,12261160366). M.-Z.~Xin is partially supported by NSF of China (12601334), the China National
Postdoctoral Program for Innovative Talents (BX2026009), and the China
Postdoctoral Science Foundation (2026M793394).


\begin{thebibliography}{99}
\renewcommand{\baselinestretch}{1.00}\selectfont
\enlargethispage{2\baselineskip}

\bibitem{Amann2005}
H.~Amann,
Maximum principles and principal eigenvalues,
in \emph{Ten Mathematical Essays on Approximation in Analysis and Topology},
J.~Ferrera, J.~L\'opez-G\'omez and F.R.~Ruiz del Portal (eds.),
Elsevier, Amsterdam, 2005, pp.~1--60.

\bibitem{CantrellCosner1989}
R.S.~Cantrell and C.~Cosner,
Diffusive logistic equations with indefinite weights: population models in
disrupted environments,
\emph{Proc. Roy. Soc. Edinburgh Sect. A} \textbf{112} (1989) 293--318.

\bibitem{CantrellCosner19892003}
R.S.~Cantrell and C.~Cosner,
\emph{Spatial Ecology via Reaction-Diffusion Equations},
Wiley Series in Mathematical and Computational Biology,
John Wiley \& Sons, Ltd., Chichester, 2003.

\bibitem{CantrellCosner1991}
R.S.~Cantrell and C.~Cosner,
The effects of spatial heterogeneity in population dynamics,
\emph{J. Math. Biol.} \textbf{29} (1991) 315--338.



\bibitem{CrandallRabinowitzHopf}
M.G.~Crandall and P.H.~Rabinowitz,
The Hopf bifurcation theorem in infinite dimensions,
\emph{Arch. Rational Mech. Anal.} \textbf{67} (1977) 53--72.

\bibitem{DuHsu2004}
Y.~Du and S.-B.~Hsu,
A diffusive predator--prey model in heterogeneous environment,
\emph{J. Differential Equations} \textbf{203} (2004) 331--364.

\bibitem{DuLou1997}
Y.~Du and Y.~Lou,
Some uniqueness and exact multiplicity results for a predator--prey model,
\emph{Trans. Amer. Math. Soc.} \textbf{349} (1997) 2443--2475.

\bibitem{DuLou1998}
Y.~Du and Y.~Lou,
S-shaped global bifurcation curve and Hopf bifurcation of positive solutions to a
predator--prey model,
\emph{J. Differential Equations} \textbf{144} (1998) 390--440.

\bibitem{DuShi2007}
Y.~Du and J.~Shi,
Allee effect and bistability in a spatially heterogeneous predator--prey model,
\emph{Trans. Amer. Math. Soc.} \textbf{359} (2007) 4557--4593.

\bibitem{DuShi2006}
Y.~Du and J.~Shi,
A diffusive predator--prey model with a protection zone,
\emph{J. Differential Equations} \textbf{229} (2006) 63--91.

\bibitem{FournaisHelffer2010}
S.~Fournais and B.~Helffer,
\emph{Spectral Methods in Surface Superconductivity},
Progress in Nonlinear Differential Equations and Their Applications, vol.~77,
Birkh\"auser, Boston, 2010.

\bibitem{GohbergSigal1971}
I.C.~Gohberg and E.I.~Sigal,
An operator generalization of the logarithmic residue theorem and the theorem
of Rouch\'e,
\emph{Math. USSR-Sb.} \textbf{13} (1971) 603--625.

\bibitem{Helffer1988}
B.~Helffer,
\emph{Semi-Classical Analysis for the Schr\"odinger Operator and Applications},
Lecture Notes in Mathematics, vol.~1336, Springer, Berlin, 1988.

\bibitem{Henry1981}
D.~Henry,
\emph{Geometric Theory of Semilinear Parabolic Equations},
Lecture Notes in Mathematics, vol.~840, Springer, Berlin, 1981.

\bibitem{KutoLopezGomezMunoz2026}
K.~Kuto, J.~L\'opez-G\'omez and E.~Mu\~noz-Hern\'andez,
High multiplicity and global structure of coexistence states in a predator--prey
model with saturation,
\emph{J. Differential Equations} \textbf{462} (2026) 114116.

\bibitem{LamLouBook}
K.-Y.~Lam and Y.~Lou,
\emph{Introduction to Reaction-Diffusion Equations: Theory and Applications to
Spatial Ecology and Evolutionary Biology},
Lecture Notes on Mathematical Modelling in the Life Sciences,
Springer, Cham, 2022.

\bibitem{LiWuDong2019}
S.~Li, J.~Wu and Y.~Dong,
Uniqueness and stability of positive solutions for a diffusive predator--prey model in
heterogeneous environment,
\emph{Calc. Var. Partial Differential Equations} \textbf{58} (2019) 110.



%\bibitem{LopezGomezMunoz2024}
%J.~L\'opez-G\'omez and E.~Mu\~noz-Hern\'andez,
%A robust multiplicity result in a generalized diffusive predator--prey model,
%\emph{Adv. Differential Equations} \textbf{29} (2024) 437--476.

\bibitem{LopezGomezPardo1998}
J.~L\'opez-G\'omez and R.M.~Pardo,
Invertibility of linear noncooperative elliptic systems,
\emph{Nonlinear Anal.} \textbf{31} (1998) 687--699.

\bibitem{LopezNietoEtAl2025}
A.~L\'opez-Nieto, P.~Lappicy, N.~Vassena, H.~Stuke and J.-Y.~Dai,
Hybrid bifurcations: periodicity from eliminating a line of equilibria,
\emph{Math. Ann.} \textbf{391} (2025) 6373--6399.

\bibitem{Lotka1920}
A.J.~Lotka,
Analytical note on certain rhythmic relations in organic systems,
\emph{Proc. Natl. Acad. Sci. U.S.A.} \textbf{6} (1920) 410--415.

\bibitem{Lou2006}
Y.~Lou,
On the effects of migration and spatial heterogeneity on single and multiple
species,
\emph{J. Differential Equations} \textbf{223} (2006) 400--426.

\bibitem{OkuboLevin2001}
A.~Okubo and S.A.~Levin,
\emph{Diffusion and Ecological Problems: Modern Perspectives}, 2nd ed.,
Interdisciplinary Applied Mathematics, vol.~14,
Springer, New York, 2001.

\bibitem{PengWang2008}
R.~Peng and M.~Wang,
Uniqueness and stability of steady states for a predator--prey model in heterogeneous
environment,
\emph{Proc. Amer. Math. Soc.} \textbf{136} (2008) 859--865.

\bibitem{SegelJackson1972}
L.A.~Segel and J.L.~Jackson,
Dissipative structure: An explanation and an ecological example,
\emph{J. Theoret. Biol.} \textbf{37} (1972) 545--559.

\bibitem{ShuXinWangYu2026}
H.~Shu, S.~Xin, X.-S.~Wang and J.~Yu,
Joint impact of diffusion and advection on predator--prey dynamics with
delay and fear effect,
\emph{SIAM J. Appl. Math.} \textbf{86} (2026) 76--103.

\bibitem{Simon1983}
B.~Simon,
Semiclassical analysis of low lying eigenvalues. I. Non-degenerate minima:
asymptotic expansions,
\emph{Ann. Inst. H. Poincar\'e Sect. A (N.S.)} \textbf{38} (1983) 295--308.

\bibitem{Skellam1951}
J.G.~Skellam,
Random dispersal in theoretical populations,
\emph{Biometrika} \textbf{38} (1951) 196--218.

\bibitem{Volterra1926}
V.~Volterra,
Fluctuations in the abundance of a species considered mathematically,
\emph{Nature} \textbf{118} (1926) 558--560.

\bibitem{YiWeiShi2009}
Y.~Yi, J.~Wei and J.~Shi,
Bifurcation and spatiotemporal patterns in a homogeneous diffusive predator--prey system,
\emph{J. Differential Equations} \textbf{246} (2009), no.~5, 1944--1977.
\end{thebibliography}
\end{document}